\documentclass{article}

\usepackage[a4paper,top=2.54cm,bottom=2.54cm,left=3.17cm,right=3.17cm,%
            includehead,includefoot]{geometry}

\usepackage{amsmath,amssymb,amsfonts,amsthm}
\usepackage{graphicx}
\usepackage{subfigure}
\usepackage{float}
\usepackage{xcolor}
\usepackage[numbers,square,sort&compress]{natbib}
\usepackage{hyperref}
  \hypersetup{colorlinks,citecolor=blue,linkcolor=blue,breaklinks=true}
\usepackage{booktabs}
\usepackage{colortbl}
\usepackage{caption}
\usepackage{enumitem}
\usepackage{amsmath}
\numberwithin{equation}{section}
\usepackage{epstopdf}
\usepackage{algorithm}
\usepackage{algpseudocode}
\usepackage{array}
\usepackage{bbding}
\usepackage{placeins}
\usepackage{fancyhdr} 
\usepackage{fancyvrb} 
\usepackage{longtable}
\usepackage{listings}
\usepackage{rotating,rotfloat} 
\usepackage{yhmath} 
\newtheorem{conjecture}{Conjecture}

\usepackage{fancyhdr}
\allowdisplaybreaks

\newtheorem{theorem}{Theorem}[section]
\newtheorem{lemma}{Lemma}[section]

\newtheorem{definition}{Definition}[section]

\newcommand{\bbm}{\begin{bmatrix}}
\newcommand{\ebm}{\end{bmatrix}}

\begin{document}

	\title{Asymptotic Behaviors of Global Solutions to Fourth-order Parabolic and Hyperbolic Equations with Dirichlet Boundary Conditions}

 \author{Wenlong Wu\thanks{School of Mathematical Sciences, East China Normal University, Shanghai 200241,   P.R. China. Email: \texttt{51255500078@stu.ecnu.edu.cn}. },\,
 Yanyan Zhang\thanks{Corresponding author. School of Mathematical Sciences,  Key Laboratory of MEA(Ministry of Education) and Shanghai Key Laboratory of PMMP,  East China Normal University, Shanghai 200241, China. Email: \texttt{yyzhang@math.ecnu.edu.cn}. Y. Zhang is sponsored by
  NSFC [No.12271505] and
  STCSM  [No.22DZ2229014].}}

\date{}

\maketitle

\begin{center}
	\textbf{Abstract}
\end{center}

This paper investigates the asymptotic behaviors of global solutions to fourth-order parabolic and hyperbolic equations with Dirichlet boundary conditions. The equations model Micro-Electro-Mechanical Systems (MEMS) and are depending on a positive voltage parameter $\lambda$. We establish the convergence of global solutions to an equilibrium, along with the convergence rate estimates. Supporting numerical simulations are presented.

\vspace{0.5\baselineskip} 

\noindent\textbf{Keywords:}  MEMS equation, fourth-order equation, global solution, Lojasiewicz
Simon inequality, convergence rate

\vspace{0.5\baselineskip} 

\noindent\textbf{Mathematics Subject Classification (2020):} 35B40, 35K35, 35L70, 74F15, 74H40

\section{Introduction}

Micro-Electro-Mechanical Systems are microscopic devices that combine electrostatic effects and precision machining technology. In recent years, MEMS devices have been widely used in electronic devices, aerospace, and medical fields, such as gyroscopes in smartphones, pressure sensors in aviation systems, and pacemakers. Due to the importance of MEMS to science and industry, many engineers have developed a strong interest in its mathematical modeling.

A simplified MEMS device is shown in Figure \ref{model} (see \cite[Figure 7.13]{ref6}). It consists of a fixed ground plate and a deformable elastic plate with fixed boundaries. When a voltage is applied, the elastic plate deflects towards the ground plate. A common feature of such devices is that when the applied voltage exceeds a certain threshold, the elastic plate may ​touch down​ on the ground plate (this phenomenon is called quenching), resulting in ​pull-in instability of the device.

\begin{figure}[!htp]
	\begin{center}
		\includegraphics[width=0.6\textwidth]{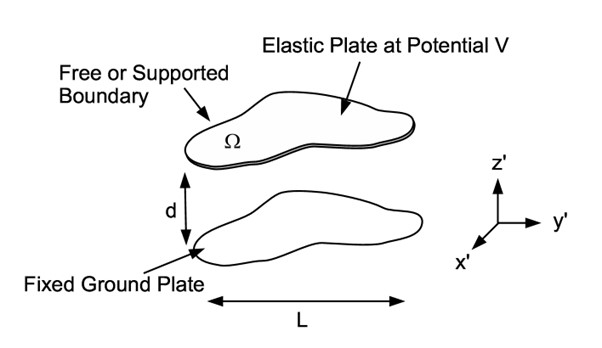}
		\caption{A simplified MEMS device\label{model}}
	\end{center}
\end{figure}
Over the past two decades, mathematical research on MEMS equations ​has expanded rapidly. Among these, the following second-order MEMS problem has been widely studied.
\begin{equation}\label{wxzseq1.5}
	\left.\left\{\begin{array}{ll}\varepsilon u_{tt}+u_{t}-\Delta u=\frac{\lambda f(x)}{(1-u)^{2}},\quad &t>0, \ x\in\Omega ,\\
		u=0,& x\in\partial\Omega ,\\
		u(0,\cdot)=u_{0}, \ \ u_{t}(0,\cdot)=u_{1},& x\in \Omega,\end{array}\right.\right.
\end{equation}
For example, scholars have investigated the quenching phenomena \cite{ref41,ref42,ref43,ref44}, the global existence of solutions \cite{ref43,ref44}, and the convergence of solutions to steady states \cite{ref45}. When the nonlinear term in equation \eqref{wxzseq1.5} ​is replaced by​ other nonlinear forms, there are also many results. We refer the reader to \cite{ref8,ref11,ref14,ref26,ref30,ref34,ref36} for more detalis. In contrast, research on fourth-order MEMS equations remains ​limited.

In 2010, Guo \cite{ref1} considers the fourth-order wave equation with Naiver boundary conditions.
	\begin{equation}\label{guoyujin}
	\left.\left\{\begin{array}{ll} \mu u_{tt}+u_{t}+B\Delta^2u-\Delta u=\frac{\lambda}{(1-u)^2},\quad &t>0, \ x\in\Omega ,\\
		u=\Delta u=0,& x\in\partial\Omega ,\\
		u(0,\cdot)=u_{0}\in [0,1),\quad u_{t}(0,\cdot)=u_{1}\ge 0 & x\in \Omega,\end{array}\right.\right.
\end{equation}
where $\mu,B,\lambda>0,\Omega\subset \mathbb{R}^{N}$ is a bounded domain with smooth boundary. When $1\le N\le 3$, the author obtains some results, including local existence of solution, global existence of solution, and the convergence of the global solution to a stationary solution with the decay rate.

 In 2017, Miyasita focuses on a nonlocal biharmonic MEMS equation with Navier boundary conditions.
 	\begin{equation}\label{wxzseq1.3}
 	\left.\left\{\begin{array}{ll}u_{tt}+\kappa u_{t}+\Delta^2u=G(\beta,\gamma,\nabla u)\Delta u +\lambda\frac{1+\delta|\nabla u|^{2}}{(1-u)^{\sigma}}I(\sigma,\chi,u),\quad &t>0, \ x\in\Omega ,\\
 		u=\Delta u=0,& x\in\partial\Omega ,\\
 		u(0,\cdot)=u_{0}, \ \ u_{t}(0,\cdot)=u_{1}& x\in \Omega,\end{array}\right.\right.
 \end{equation}
 where $\lambda,\delta,\beta,\gamma,\chi>0,\kappa\ge0,\,\sigma\ge 2$ is a constant, $\Omega\subset \mathbb{R}^{n}$ ($n\in \mathbb{N}$) is a bounded domain with smooth boundary. The functions $G,I,H$ are as follows.
 \begin{gather*}
 	G(\beta,\gamma,\nabla u)=\beta\int_{\Omega}|\nabla u|^{2}dx+\gamma,\\
 	I(\sigma,\chi,u)=\frac{1}{H(\sigma,\chi,u)^{\sigma}}, \qquad   H(\sigma,\chi,u)=1+\chi\int_{\Omega}\frac{dx}{(1-u)^{\sigma -1}}.
 \end{gather*}
 In this model, a capacitor control scheme is considered. The author studies the existence and continuity of local solutions, the dynamic properties of the $\omega$-limit set and the convergence rate of the global solution to the stationary solution.

 In this paper, We consider the fourth-order parabolic and hyperbolic equations with dirichlet boundary conditions.
 	\begin{equation}\label{neweq1.1}
 	\left.\left\{\begin{array}{ll} u_{t}+B\Delta^2u-T\Delta u=-\frac{\lambda}{(1+u)^2},\quad &t>0, \ x\in\Omega ,\\
 		u=\partial_\nu u=0,& x\in\partial\Omega ,\\
 		u(0,\cdot)=u_{0},& x\in \Omega,\end{array}\right.\right.
 \end{equation}
 \begin{equation}\label{neweq1.2}
 	\left.\left\{\begin{array}{ll} u_{tt}+u_{t}+B\Delta^2u-T\Delta u=-\frac{\lambda}{(1+u)^2},\quad &t>0, \ x\in\Omega ,\\
 		u=\partial_\nu u=0,& x\in\partial\Omega ,\\
 		u(0,\cdot)=u_{0}, \ \ u_{t}(0,\cdot)=u_{1},& x\in \Omega,\end{array}\right.\right.
 \end{equation}
 where $B>0,T\ge 0$ denotes the bending and stretching coefficients respectively, $\lambda$ is proportional to the square of the applied voltage. Here the unknown function $u=u(t,x)$ describes the displacement of the membrane on the general smooth bounded domain $\Omega\subset \mathbb{R}^{d}$ ($d=1,2$), $\nu$ is the unit outer normal vector on $\partial \Omega$.

 In fact, Laurençot and Walker \cite{ref2} have established some results on problems \eqref{neweq1.1} and \eqref{neweq1.2}, including the local and global well-posedness of solutions, quenching criteria, and sufficient conditions for the existence of the global solution. For details, see Section~\ref{preliminaries}. Based on this, the purpose of this paper is to study the asymptotic behaviors of the global solution to problems \eqref{neweq1.1} and \eqref{neweq1.2}. Before giving the main theorem, we first introduce some notations.
 	\begin{gather*}
 	H_{D}^{2}(\Omega):=\left\{u\in H^{2}(\Omega):u=\partial_{\nu}u=0\ \text{on} \ \partial \Omega \right\},\\
 	\left\|u\right\|_{H_{D}^{2}}:=\left(\int_{\Omega}(B\left|\Delta u \right|^{2}+T\left|\nabla u \right|^{2})dx\right)^{\frac{1}{2}},\\
 	H_{D}^{4}(\Omega):=\left\{u\in H^{4}(\Omega):u=\partial_{\nu}u=0 \ \text{on} \ \partial \Omega\right\},\\
 	\left\| u\right\|_{H_{D}^{4}}:=\left\| u\right\|_{H^{4}(\Omega)}, \quad
 	\left\|\cdot \right\|_{L^{2}}:=\left\|\cdot \right\|_{L^{2}(\Omega)}, \quad \left\|\cdot \right\|_{L^{\infty}}:=\left\|\cdot \right\|_{L^{\infty}(\Omega)}.
 \end{gather*}
 We identify $L^{2}(\Omega)$ with its dual and we denote by $V'$ the dual of $V$ . The inner product in $L^{2}(\Omega)$ and in $V'$ are respectively, denoted by $(\cdot,\cdot)_{L^{2}}$ and $(\cdot,\cdot)_{V'\times V'}$. According to Riesz representation theorem and Sobolev embedding theorem, we get $V\hookrightarrow L^{2}(\Omega)\cong {(L^{2}(\Omega))}' \hookrightarrow V'$. Then there exists two constants $\alpha_{1}, \alpha_{2}>0$ such that
 \begin{equation*}
 	\left\|w \right\|_{V'}\le  \alpha_{1}\left\|w \right\|_{L^{2}}, \quad \left\|w \right\|_{L^{2}}\le \alpha_{2} \left\|w \right\|_{V},\quad \forall w\in V.
 \end{equation*}
 We define the set of stationary solutions to \eqref{neweq1.1} by
 \begin{equation}
 	\mathcal{S}=\left\{\phi \ |\  B\Delta^{2}\phi-T\Delta \phi+\frac{\lambda}{(1+\phi)^{2}}=0,\phi=\partial_{\nu}\phi=0 \ \text{on} \ \partial \Omega 上\right\}.
 \end{equation}
 Let $-A=-(B\Delta^{2}-T\Delta)$, $\mathbf{u}=(u,u_{t})^{T}$, $\mathbf{u}_{0}=(u_{0},u_{1})^{T}$, $g(\mathbf{u})=(0,f(u))^{T}$ ($f(u)=-\lambda(1+u)^{-2}$), We define the matrix operator 	
 \begin{equation*}
 	\mathbb{A}:=\begin{pmatrix}
 		0 & -1\\
 		A & 1
 	\end{pmatrix}.
 \end{equation*}
 We reformulate \eqref{neweq1.2} as a Cauchy problem
 \begin{equation*}
 	\dot{\mathbf{u}}+\mathbb{A}\mathbf{u}=g(\mathbf{u}),\quad t>0,\quad \mathbf{u}(0)=\mathbf{u}_{0}.
 \end{equation*}
  We define the set of stationary solutions to \eqref{neweq1.2} by
 \begin{equation*}
 	\tilde{\mathcal{S}}=\left\{(\phi_{1},\phi_{2}) \ |\  \mathbb{A}(\phi_{1},\phi_{2})=g(\mathbf{u}),\phi_{1}=\partial_{\nu}\phi_{1}=0\ \text{on} \ \partial \Omega 上\right\}.
 \end{equation*}
Note that
 \begin{equation*}
 	\mathbb{A}(\phi_{1},\phi_{2})=g(\mathbf{u}) \Rightarrow 	\left.\left\{\begin{array}{ll} A\phi_{1}=f(\phi_{1}),\\
 		\phi_{2}=0.\end{array}\right.\right.
 \end{equation*}
Then $\tilde{\mathcal{S}}$ can be simplified to
 \begin{equation}
 	\tilde{\mathcal{S}}=\left\{(\phi_{1},\phi_{2}) \ | \  (\phi_{1},\phi_{2})\in  \mathcal{S}\times \left\{0\right\}\right\}.
 \end{equation}
 Note that $d=1,2$, it follows from Sobolev compact embedding theorem (\cite[Section~5.6.3]{refnew40}) that there is a constant $C_{0}=C_{0}(d,\Omega)>0$ such that
 \begin{equation*}
 	\left\|v \right\|_{L^{\infty}}\le C_{0} \left\|v \right\|_{H_{D}^{2}},\quad \forall v\in H_{D}^{2}(\Omega).
 \end{equation*}
 We define two space:
 \begin{equation*}
 	X(\kappa):=\left\{u\in H_{D}^{4}(\Omega):\left\|u \right\|_{H_{D}^{2}}^{2}\le \frac{(1-\kappa)^{2}}{C_{0}^{2}}\ \text{in} \ \Omega \right\}.
 \end{equation*}
 \begin{equation*}
 	Z(\kappa):=\left\{u\in H_{D}^{2}(\Omega)\times L^{2}(\Omega):\left\|u \right\|_{H_{D}^{2}}^{2}+\left\|u \right\|_{L^{2}}^{2}\le \frac{(1-\kappa)^{2}}{C_{0}^{2}}\ \text{in} \ \Omega \right\}.
 \end{equation*}
 The following theorem shows that the global solution of \eqref{neweq1.1} must converge to a stationary solution. In addition, theorem gives the convergence rate.
 	\begin{theorem}\label{theorem1.2}
 	Let $\Omega\subset \mathbb{R}^{d}$ \textup{($d=1,2$)} be an arbitrary bounded smooth domain. Let $B>0$, $T\ge 0$, $\kappa\in(0,1)$, $0<\lambda <\frac{\kappa^{2}(8-3\kappa)}{128C_{0}^{2}|\Omega|}$. For any given $u_{0}\in X(\kappa)$, if the solution $u$ to \eqref{neweq1.1} globally exists and satisfies $u\in X(\kappa)$ for all $t>0$ , then there exists $\psi\in \mathcal{S}$ such that
 	\begin{equation}\label{eq1.10}
 		\lim_{t\to\infty}\left\|u(t,\cdot)-\psi \right\|_{H_{D}^{4}}=0.
 	\end{equation}
 	Furthermore, there is $C_{1}>0$, $\gamma_{1}>0$, and $T_{1}>0$ such that when $t\ge T_{1}$,
 	\begin{equation}\label{eq1.11}
 		\left\|u(t,\cdot)-\psi\right\|_{H_{D}^{4}}\le C_{1}(1+t)^{-\gamma_{1} }.
 	\end{equation}
 \end{theorem}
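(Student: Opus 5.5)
The plan is the standard Łojasiewicz--Simon argument for gradient flows, applied to the energy
\begin{equation*}
E(u)=\frac12\int_\Omega\big(B|\Delta u|^2+T|\nabla u|^2\big)\,dx-\lambda\int_\Omega\frac{dx}{1+u},
\end{equation*}
whose Fréchet derivative on $H_D^2(\Omega)$ is $E'(u)=B\Delta^2u-T\Delta u-\lambda(1+u)^{-2}$.

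\textbf{Energy identity and uniform bounds.} The equation \eqref{neweq1.1} is exactly $u_t=-E'(u)$. Hence, for the global solution,
\begin{equation*}
\frac{d}{dt}E(u(t))=-\|u_t\|_{L^2}^2 .
\end{equation*}
Since $u(t)\in X(\kappa)$ for all $t$, we have $\|u\|_{L^\infty}\le C_0\|u\|_{H_D^2}\le 1-\kappa$, so $1+u\ge\kappa$. The nonlinearity $\lambda(1+u)^{-2}$ and all its derivatives are therefore uniformly bounded, and $E(u(t))$ is bounded below. It follows that $\int_0^\infty\|u_t\|_{L^2}^2\,dt<\infty$.

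Next I would obtain a uniform $H^4$ bound, together with some extra regularity such as bounds in $H^{4+\epsilon}$ or on $u_t$, by parabolic regularity for $u_t+Au=f(u)$ with bounded $f(u)$. These bounds give relative compactness of the orbit in $H_D^4$. Consequently the $\omega$-limit set $\omega(u_0)$ is nonempty, compact and connected. Using the energy identity (so that $u(t_n+s)-u(t_n)\to0$ in $L^2$), I would show that $\omega(u_0)\subset\mathcal S\cap X(\kappa)$ and that $E$ is constant, equal to $E_\infty$, on it.

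\textbf{Łojasiewicz--Simon inequality.} Near any $\psi\in\mathcal S$ with $\|\psi\|_{L^\infty}\le 1-\kappa$, the functional $E$ is analytic on a neighbourhood in $H_D^2$, because $s\mapsto(1+s)^{-1}$ is analytic for $s>-1$ and $H_D^2\hookrightarrow L^\infty$. The linearization $A-2\lambda(1+\psi)^{-3}$ is a compact perturbation of the isomorphism $A:H_D^4\to L^2$, hence Fredholm of index zero. By the standard Łojasiewicz--Simon theorem (Chill's or Huang's version) there exist $\theta\in(0,\tfrac12]$ and $\beta,C>0$ such that
\begin{equation*}
|E(v)-E(\psi)|^{1-\theta}\le C\|E'(v)\|_{L^2}\quad\text{whenever } \|v-\psi\|_{H_D^2}<\beta .
\end{equation*}

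The smallness hypothesis on $\lambda$ is where I expect the real work. I would use it to show that stationary solutions in $X(\kappa)$ are nondegenerate, or even unique. The estimate
\begin{equation*}
2\lambda\|(1+\psi)^{-3}\|_{L^\infty}\le \frac{2\lambda}{\kappa^3}
\end{equation*}
compared with the coercivity constant of $A$ should make $A-f'(\psi)$ coercive. The numerical constant $\kappa^2(8-3\kappa)/128$ presumably comes from a contraction or energy estimate of this type. If nondegeneracy holds, one may even take $\theta=\tfrac12$, but I would only claim some $\theta\in(0,\tfrac12]$. Because $\omega(u_0)$ is compact, the neighbourhoods can be covered by finitely many balls, which gives uniform constants $\theta,\beta,C$ on a neighbourhood of the whole $\omega$-limit set.

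\textbf{Convergence and rate.} Set $H(t)=(E(u(t))-E_\infty)^\theta$. For large $t$, while $u(t)$ stays in the neighbourhood,
\begin{equation*}
-\frac{d}{dt}H=\theta(E-E_\infty)^{\theta-1}\|u_t\|_{L^2}^2\ge \frac{\theta}{C}\|u_t\|_{L^2},
\end{equation*}
using $\|E'(u)\|_{L^2}=\|u_t\|_{L^2}$. Hence $\int^\infty\|u_t\|_{L^2}\,dt<\infty$. A standard continuation argument then shows that $u$ stays in the neighbourhood, so $u(t)\to\psi$ in $L^2$. Compactness upgrades this to convergence in $H_D^4$, which is \eqref{eq1.10}.

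For the rate, the Łojasiewicz inequality gives
\begin{equation*}
\frac{d}{dt}(E-E_\infty)\le -C^{-2}(E-E_\infty)^{2(1-\theta)},
\end{equation*}
so $E-E_\infty$ decays polynomially (exponentially if $\theta=\tfrac12$). Integrating the previous inequality gives
\begin{equation*}
\|u(t)-\psi\|_{L^2}\le\int_t^\infty\|u_t\|_{L^2}\le C(E-E_\infty)^\theta\le C(1+t)^{-\theta/(1-2\theta)}.
\end{equation*}

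The main obstacle is upgrading this $L^2$ rate to the $H_D^4$ rate \eqref{eq1.11}. I would interpolate between the $L^2$ decay and a uniform higher-norm bound, say in $H^{4+\epsilon}$, obtained from parabolic smoothing for $t\ge1$. Alternatively, I would write $w=u-\psi$, so that
\begin{equation*}
w_t+Aw=f(u)-f(\psi),
\end{equation*}
and use elliptic regularity $\|w\|_{H^4}\lesssim\|u_t\|_{L^2}+\|w\|_{L^2}$ together with a decay estimate for $\|u_t\|_{L^2}$. The latter comes from differentiating the equation in time and applying a uniform Gronwall argument. Either route yields \eqref{eq1.11} with a smaller exponent $\gamma_1>0$.
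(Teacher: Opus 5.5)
Your proof is correct and has the same skeleton as the paper's. Both run through a Lyapunov functional, a precompact orbit, a Lojasiewicz--Simon inequality at an $\omega$-limit point, $\int^\infty\|u_t\|_{L^2}\,dt<\infty$, and an $H_D^4$ rate from $\|w\|_{H^4}\lesssim\|w_t\|_{L^2}+\|w\|_{L^2}$ plus decay of $\|u_t\|_{L^2}$. Your second option for that last step is essentially what the paper does with $p(t)$. Two ingredients, however, are implemented differently.

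\textbf{Precompactness.} The paper approximates $u_0$ by $u_0^{(n)}\in D(A^2)$ and runs $t$-weighted energy estimates on $u^{(n)}_t$. It then gets a uniform $H^5$ bound from \eqref{eqnewnew1000} and passes to the limit. You instead smooth $u$ itself with the analytic semigroup. For that, boundedness of $f(u)$ in $L^2$ or $L^\infty$ is not enough, since it only gives $D(A^{1-\epsilon})$. Use instead that $u(t)\in X(\kappa)$ makes $f(u(t))$ bounded in $H^1(\Omega)\subset D(A^{\epsilon'})$ for $\epsilon'<1/8$. This gives uniform bounds in $D(A^{1+\delta})\subset H^{4+4\delta}$ for $t\ge 1$, with $\delta<\epsilon'$.

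\textbf{Lojasiewicz--Simon inequality.} The paper proves it by hand in $H_D^4$-neighbourhoods, via a Fredholm alternative and a Lyapunov--Schmidt reduction to $\mathbb{R}^m$. You quote an abstract theorem in $H_D^2$-neighbourhoods, which is legitimate and slightly stronger.

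\textbf{Role of the bound on $\lambda$.} Your guess here is off, though harmless, because nothing in your argument rests on it. The paper never uses the bound for nondegeneracy: its Lojasiewicz--Simon lemma explicitly treats $\ker L\neq\{0\}$. Moreover, the hypothesis implies the obvious coercivity condition $2\lambda\kappa^{-3}C_0^2|\Omega|<1$ only when $\kappa\ge 8/67$. The bound enters only in the density argument of Lemma~\ref{eqnewlemma35.1}, which is reused at the end of the rate proof. There the approximating data lie merely in $X(\kappa/2)$, and \eqref{eqnew35.7}--\eqref{eqnew35.8} are needed to keep $u^{(n)}(t)\in X(\kappa/4)$, hence global, for all $t$.

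Since you work directly with $u$, for which $u(t)\in X(\kappa)$ is assumed, your route proves the statement with no restriction on $\lambda$. That is what it buys. The cost is relying on fractional-power smoothing and an abstract Lojasiewicz--Simon theorem instead of explicit estimates.

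\textbf{Sign error.} The correct derivative is $E'(u)=B\Delta^2u-T\Delta u+\lambda(1+u)^{-2}$. With the minus sign you wrote, $u_t=-E'(u)$ would be false.
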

 Similar to theorem~\ref{theorem1.2}, theorem~\ref{theorem1.3} shows that the global solution to \eqref{neweq1.2} must converge to a stationary solution. In addition, the convergence rate of the solution is also given.

 	\begin{theorem}\label{theorem1.3}
 	Let $\Omega\subset \mathbb{R}^{d}$ \textup{($d=1,2$)} be an arbitrary bounded smooth domain. Let $B>0$, $T\ge 0$, $\kappa\in(0,1)$, $0<\lambda <\frac{\kappa^{2}(8-3\kappa)}{128C_{0}^{2}|\Omega|}$. For any given $(u_{0},u_{1})\in Z(\kappa)$, if the solution $u$ to \eqref{neweq1.2} globally exists and satisfies $(u,u_{t})\in Z(\kappa)$ for all $t>0$, then there exists $\psi\in \mathcal{S}$ such that
 	\begin{equation}\label{eq1.14}
 		\lim_{t\to\infty}\left\{\left\|u_{t} \right\|_{L^{2}}+\left\|u(t,\cdot)-\psi \right\|_{H_{D}^{2}} \right\}=0.
 	\end{equation}
 	Furthermore, there is $C_{2}>0$, $\gamma_{2}>0$, 与 $T_{2}>0$ such that when $t\ge T_{2}$,
 	\begin{equation}\label{eq1.16}
 		\left\|u_{t} \right\|_{L^{2}}+\left\|u(t,\cdot)-\psi\right\|_{H_{D}^{2}}\le C_{2}(1+t)^{-\gamma_{2} }.
 	\end{equation}
 \end{theorem}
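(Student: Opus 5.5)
The proof of Theorem~\ref{theorem1.3} will run the Łojasiewicz--Simon method for damped second-order gradient-like systems, in the spirit of Haraux--Jendoubi. Throughout I work with the energy functional
\begin{equation*}
E(u)=\frac12\|u\|_{H_D^2}^2-\lambda\int_\Omega\frac{dx}{1+u},
\end{equation*}
whose critical points are exactly $\mathcal S$. The total energy
\begin{equation*}
\mathcal E(t)=\tfrac12\|u_t\|_{L^2}^2+E(u(t))
\end{equation*}
then satisfies $\frac{d}{dt}\mathcal E=-\|u_t\|_{L^2}^2$.

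The constraint $(u,u_t)\in Z(\kappa)$ together with $\|v\|_{L^\infty}\le C_0\|v\|_{H_D^2}$ gives $\|u(t)\|_{L^\infty}\le 1-\kappa$. Hence $1+u\ge\kappa$ uniformly, and the nonlinearity $f(u)=-\lambda(1+u)^{-2}$ is smooth and bounded on the relevant range. Its Nemytskii operator is then analytic from $H_D^2$ (an algebra, since $d\le 2$) into $L^2$. The smallness assumption on $\lambda$ is presumably used to keep the stationary solutions in the same region $\|\phi\|_{L^\infty}\le 1-\kappa$. It should also ensure that $E$ is bounded below and coercive on $Z(\kappa)$, which yields $\int_0^\infty\|u_t\|^2_{L^2}\,dt<\infty$.

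\textbf{Step 1 (compactness and $\omega$-limit set).} The trajectory is bounded in $H_D^2\times L^2$. To get precompactness I split $u=v+w$, where $v$ solves the linear damped plate equation with the initial data; $v$ decays exponentially since $A$ is positive self-adjoint and the damping is present. The remainder $w$ is driven by $f(u)\in H_D^2$ with bounded norm, so the variation-of-constants formula keeps $w$ bounded in $H_D^4\times H_D^2$. This gives relative compactness, so $\omega(u_0,u_1)$ is nonempty, compact and connected. Since $\mathcal E$ is a strict Lyapunov function, LaSalle's principle gives $\omega(u_0,u_1)\subset\tilde{\mathcal S}=\mathcal S\times\{0\}$ and shows that $E$ is constant on it.

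\textbf{Step 2 (Łojasiewicz--Simon inequality).} I will prove that for each $\psi\in\mathcal S$ there exist $\theta\in(0,\tfrac12]$ and $\beta>0$ such that
\begin{equation*}
\|A u-f(u)\|_{V'}\ge |E(u)-E(\psi)|^{1-\theta}\quad\text{whenever } \|u-\psi\|_{H_D^2}<\beta .
\end{equation*}
This follows from the abstract result (Chill, or Huang's Theorem~2.4.2 type): $E$ is analytic on a neighbourhood of $\psi$ in $H_D^2$, and $E''(\psi)=A-f'(\psi)$ is Fredholm of index zero $H_D^2\to V'$, because $f'(\psi)$ is compact as a multiplication operator. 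The main technical obstacle is checking analyticity. This must be done on the open set $\{\|u\|_{L^\infty}<1\}$, using the power series of $(1+u)^{-1}$ and the $L^\infty$ control. By compactness of $\omega$, one pair $\theta,\beta$ works uniformly near all of $\omega(u_0,u_1)$.

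\textbf{Step 3 (convergence and rate).} I use the perturbed Lyapunov functional
\begin{equation*}
H(t)=\mathcal E(t)-E_\infty+\varepsilon\,(Au-f(u),u_t)_{V'\times V'},
\end{equation*}
where $E_\infty$ is the limit energy and $\varepsilon>0$ is small. Using the equation and the boundedness of $u_t$ in $V'$, one shows
\begin{equation*}
-H'\ge c\big(\|u_t\|_{L^2}^2+\|Au-f(u)\|_{V'}^2\big).
\end{equation*}
On intervals where $u$ stays within $\beta$ of $\omega$, the Łojasiewicz inequality then gives $H^{1-\theta}\le C(\|u_t\|_{L^2}+\|Au-f(u)\|_{V'})$, hence
\begin{equation*}
-\frac{d}{dt}H^\theta\ge c\,\|u_t\|_{L^2}.
\end{equation*}
Integrating shows $u_t\in L^1(0,\infty;L^2)$, so $u(t)$ converges in $L^2$ to some $\psi$. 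Precompactness upgrades this to $H_D^2$ convergence, with $\psi\in\mathcal S$ and $u_t\to0$. For the rate, the differential inequality $H'\le -cH^{2(1-\theta)}$ gives $H(t)\le C(1+t)^{-1/(1-2\theta)}$ when $\theta<\tfrac12$, and exponential decay when $\theta=\tfrac12$. Integrating $\|u_t\|_{L^2}$ from $t$ to $\infty$ gives $\|u(t)-\psi\|_{L^2}\le C(1+t)^{-\theta/(1-2\theta)}$.

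To lift this bound to $\|u_t\|_{L^2}+\|u-\psi\|_{H_D^2}$, I use the energy identity for the difference $u-\psi$: test the difference equation with $u-\psi$ and with $u_t$, and use the Lipschitz bound for $f$ on $[-1+\kappa,1-\kappa]$. An interpolation argument between the $L^2$ decay and the $H_D^2$ bound should also work. Either route gives \eqref{eq1.16} with some $\gamma_2>0$.
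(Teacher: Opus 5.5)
Your architecture matches the paper's:
\begin{itemize}
\item a precompact orbit plus a strict Lyapunov energy, giving $\omega(u_0,u_1)\subset\mathcal S\times\{0\}$;
\item a Lojasiewicz--Simon inequality measured in $V'=(H_D^2)'$;
\item the perturbed functional $\mathcal E+\varepsilon(Au-f(u),u_t)_{V'\times V'}$, which is exactly the paper's $G$, since $Av-f(v,\psi)=Au-f(u)$ for $\psi\in\mathcal S$;
\item the inequality $H'\le -cH^{2(1-\theta)}$;
\item the lift to $H_D^2\times L^2$ by testing the equation for $v=u-\psi$ with $v_t$ and with $v$.
\end{itemize}
You use Chill's Fredholm criterion instead of Theorem~\ref{prethm2.8}, a uniform inequality on a neighbourhood of $\omega(u_0,u_1)$ instead of the exit-time argument, and the direct bound $\int_t^\infty\|u_t\|_{L^2}\le CH(t)^{\theta}$ instead of the dyadic splitting. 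These are harmless variants. Your $L^2$ exponent $\theta/(1-2\theta)$ is also what the paper's proof of Theorem~\ref{thm4.54.5} actually produces.

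The step that fails as written is the compactness argument in Step~1. You assert $f(u)\in H_D^2$, but $u=0$ on $\partial\Omega$ gives $f(u)=-\lambda\neq 0$ there. Hence $f(u)\notin H_D^2=D(A^{1/2})$, and spatial regularity of the forcing does not place the Duhamel term $\mathbf w(t)=\int_0^tT(t-s)(0,f(u(s)))\,ds$ in $D(\mathbb A)=H_D^4\times H_D^2$.

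The conclusion can be rescued by time regularity instead. Since $1+u\ge\kappa$ and $u_t$ is bounded in $L^2$, the derivative $\partial_tf(u)=f'(u)u_t$ is bounded in $L^2$ uniformly in $t$. Therefore
\begin{equation*}
\mathbb A\mathbf w(t)=(0,f(u(t)))-T(t)(0,f(u_0))-\int_0^tT(t-s)\big(0,f'(u(s))u_t(s)\big)\,ds
\end{equation*}
is bounded in $\mathbb H$, using $\|T(t)\|_{\mathcal L(\mathbb H)}\le\tilde Ce^{-\tilde\gamma t}$. Two other repairs also work:
\begin{itemize}
\item use $f(u)\in H^s(\Omega)=D(A^{s/4})$ for some $0<s<\tfrac12$, and run the estimate in the shifted energy space $D(A^{\frac12+\frac s4})\times D(A^{\frac s4})$, which embeds compactly into $\mathbb H$;
\item follow the paper's route through Webb's criterion.
\end{itemize}

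Two smaller points. Your fallback of interpolating the $L^2$ decay against the $H_D^2$ bound yields decay only in $H^s$ for $s<2$, and says nothing about $\|u_t\|_{L^2}$, so rely on the multiplier argument. Also, the energy identity and the formula for $H'$ are only formal for mild solutions. The paper justifies them by approximating with classical solutions (Section~\ref{nlhds}). That density argument is where the smallness of $\lambda$ is actually used; it is not needed to bound $E$ from below, which is automatic on $Z(\kappa)$.
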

 In section~\ref{szmn666}, we consider the case when the dimension $d=1$ and the domain $\Omega=(-1,1)$ for problems \eqref{neweq1.1} and \eqref{neweq1.2}. By fixing parameters $B$ and $T$ while varying $\lambda$, we plot numerical solution profiles. Through observing these graphical results, we present some analysis and propose conjectures.

 The structure of this paper is organized as follows: In section~{preliminaries}, we introduce preliminaries. Section~\ref{dbcpwxfc} contains the proof of Theorem~\ref{theorem1.2}, followed by the proof of Theorem~\ref{theorem1.3} in section~\ref{dbjtjxdsqfc}. In section~\ref{szmn666}, we present relevant numerical simulation results. Finally, in section~\ref{zongjiezhanwang}, we conclude the paper and discuss future research directions."

\section{Preliminaries}\label{preliminaries}

	\begin{definition}[\cite{ref3}]
	\label{predef2.7}
	Suppose that $H$ is a complete metric space, $S(t)$ is a nonlinear $C_{0}$-semigroup defined on $H$. A continuous function $V:H\mapsto \mathbb{R}$ is called a Lyapunov function with respect to $S(t)$ if the following two conditions are satisfied:
	\begin{itemize}
		\item[(\romannumeral1)] For any $x\in H$, $V(S(t)x)$ is monotone non-increasing in $t$.
		\item[(\romannumeral2)] $V(x)$ is bounded from below, i.e., there is a constant $C$ such that for all $x\in H$, $V(x)\ge C $.
	\end{itemize}
\end{definition}

\begin{definition}[\cite{ref3}]
	\label{predef2.8}
	Suppose that $H$ is a complete metric space, $S(t)$ is a nonlinear $C_{0}$-semigroup defined on $H$ and $V(x)$ is a Lypunov function. Then the nonlinear semigroup $S(t)$, or more precisely system $(H,S(t),V)$ is called a gradient system if the following conditions are satisfied:
	\begin{itemize}
		\item[(\romannumeral1)] For any $x\in H$, there is $t_{0}>0$ such that
		\begin{equation*}
			\bigcup_{t \ge t_{0}}S(t)x
		\end{equation*}
		is relatively compact in $H$.
		\item[(\romannumeral2)] If for $t>0$, $V(S(t)x)=V(x)$, then $S(t)x=x$.
	\end{itemize}
\end{definition}

\begin{theorem}[\cite{ref3}]
	\label{prethm2.4}
	Suppose that $(H,S(t),V)$ is a gradient system. Then for any $x\in H$, $\omega$-limit set $\omega(x)$ is a connected compact invariant set, and it consists of the fixed points of $S(t)$.
\end{theorem}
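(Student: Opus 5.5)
The plan is the standard LaSalle invariance argument. Fix $x\in H$ and let $t_{0}$ be as in condition (i) of Definition~\ref{predef2.8}. Write the $\omega$-limit set as
\begin{equation*}
\omega(x)=\bigcap_{s\ge t_{0}}\overline{\bigcup_{t\ge s}S(t)x},
\end{equation*}
which equals the set of limits $\lim_{n}S(t_{n})x$ with $t_{n}\to\infty$. Throughout I use two properties of a nonlinear $C_{0}$-semigroup: $S(t)$ is continuous on $H$ for each fixed $t$, and $t\mapsto S(t)x$ is continuous. I take the first as part of the definition of a nonlinear $C_{0}$-semigroup.

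\textbf{Nonempty, compact and connected.} For $s\ge t_{0}$ the sets $K_{s}=\overline{\bigcup_{t\ge s}S(t)x}$ are compact by condition (i), since they are closed subsets of a compact set. They are also nonempty and decreasing in $s$. They are connected, because each is the closure of the continuous image of the connected interval $[s,\infty)$. A decreasing intersection of nonempty compact sets is nonempty and compact. If such an intersection were split into two disjoint closed pieces, these could be separated by disjoint open sets $U_{1},U_{2}$. Compactness would then force some $K_{s}\subset U_{1}\cup U_{2}$ while meeting both, contradicting the connectedness of $K_{s}$. Hence $\omega(x)$ is nonempty, compact and connected.

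\textbf{Invariance.} First, $S(\tau)\omega(x)\subset\omega(x)$. If $y=\lim S(t_{n})x$, then continuity of $S(\tau)$ gives $S(\tau)y=\lim S(\tau+t_{n})x\in\omega(x)$. Second, $\omega(x)\subset S(\tau)\omega(x)$. For $y=\lim S(t_{n})x$ and $\tau>0$, the sequence $S(t_{n}-\tau)x$ lies in the relatively compact tail for large $n$. Extracting a convergent subsequence, $S(t_{n}-\tau)x\to z\in\omega(x)$, and then $S(\tau)z=\lim S(t_{n})x=y$. Thus $S(\tau)\omega(x)=\omega(x)$.

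\textbf{Fixed points.} The function $t\mapsto V(S(t)x)$ is non-increasing and bounded below by Definition~\ref{predef2.7}, so it has a limit $V_{\infty}$. Since $V$ is continuous, $V(y)=\lim V(S(t_{n})x)=V_{\infty}$ for every $y\in\omega(x)$, so $V$ is constant on $\omega(x)$. Given $y\in\omega(x)$ and $t>0$, invariance gives $S(t)y\in\omega(x)$, hence $V(S(t)y)=V_{\infty}=V(y)$. Condition (ii) of Definition~\ref{predef2.8} then gives $S(t)y=y$, so every point of $\omega(x)$ is a fixed point.

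The only delicate step is the backward half of invariance, which needs the compactness of tails to extract $z$. This is exactly what condition (i) provides, so I expect no genuine obstacle beyond careful bookkeeping of $t_{0}$.
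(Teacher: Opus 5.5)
Your argument is correct. Compactness of the tails makes $\omega(x)$ a nonempty, compact, connected set with $S(\tau)\omega(x)=\omega(x)$, and the constancy of $V$ on $\omega(x)$ together with condition (ii) of Definition~\ref{predef2.8} forces every point of $\omega(x)$ to be fixed. The paper gives no proof of Theorem~\ref{prethm2.4} of its own; it simply quotes the result from \cite{ref3}, where it is presumably obtained by this standard LaSalle-type argument.
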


\begin{definition}\label{neiji}
	Recall that $V=H_{D}^{2}(\Omega)$, We define the inner product in $V'$:
	\begin{equation*}
		(v,w)_{V'\times V'}:=(v,A_{\Delta^{2}}^{-1}w)_{V'\times V},\quad \forall v,w\in V',
	\end{equation*}
	where $(\cdot , \cdot)_{V'\times V}$ is the inner product between $V$ and $V'$, and $A_{\Delta^{2}}:V\longrightarrow V'$ is an isomorphism from $V$ to $V'$. We define
	\begin{equation*}
		( A_{\Delta^{2}}\psi,\phi )_{V'\times V}=\int_{\Omega}\psi \phi dx+T\int_{\Omega}\nabla \psi \nabla \phi dx+B\int_{\Omega}\Delta \psi \Delta \phi dx,\quad \forall \psi,\phi \in V.
	\end{equation*}
	Note that $A_{\Delta^{2}}:V\longrightarrow V'$ is the operator
	\begin{equation*}
		A_{\Delta^{2}}=I+B\Delta^{2}-T\Delta.
	\end{equation*}
\end{definition}
\begin{theorem}[\mbox{\cite[Theorem~4.C]{ref10}}]
	\label{prethm2.6}
	Let the map $f:U\subseteq X\mapsto Y$ be $C^{n}$ on the open convex set $U$, where $X$ and $Y$ are Banach spaces over $\mathbb{K}$. Then $f$ can be expanded into the following form
	\begin{align*}
		f(u+h)=f(u)+\sum_{k=1}^{n-1}\frac{1}{k!}f^{(k)}(u)h^{k}+R_{n}, \\
		R_{n}=\int_{0}^{1}\frac{(1-\tau)^{n-1}}{(n-1)!}f^{(n)}(u+\tau h)h^{n}d\tau,
	\end{align*}
	where $f^{(k)}(u)$ represents the $k$-th Fr{\'e}chet derivative at $u$, and $f^{(k)}(u)h^{k}:=f^{(k)}(u)(h,\cdots,h)$.
\end{theorem}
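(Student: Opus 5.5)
The "statement immediately above" is Theorem~\ref{prethm2.6}, the Taylor expansion with integral remainder for a $C^{n}$ map between Banach spaces. This result is quoted from \cite[Theorem~4.C]{ref10}. I would prove it by reducing to the scalar case along the segment $u+\tau h$, $\tau\in[0,1]$, and then integrating by parts repeatedly.

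\textbf{Setting up the reduction.} Fix $u\in U$ and $h$ with $u+h\in U$. By convexity of $U$, the whole segment $u+\tau h$ lies in $U$. Define
\begin{equation*}
\varphi(\tau):=f(u+\tau h),\qquad \tau\in[0,1],
\end{equation*}
which takes values in $Y$. Using the chain rule for Fréchet derivatives and induction on $k$, I would show that $\varphi$ is $C^{n}$ on $[0,1]$ with
\begin{equation*}
\varphi^{(k)}(\tau)=f^{(k)}(u+\tau h)h^{k},\qquad k\le n.
\end{equation*}
The inductive step uses that $f^{(k)}:U\to\mathcal{L}^{k}(X,Y)$ is differentiable. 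Evaluation at the fixed tuple $(h,\dots,h)$ is a bounded linear map, so its derivative in $\tau$ is $f^{(k+1)}(u+\tau h)(h,h^{k})$. Continuity of $\varphi^{(n)}$ follows from continuity of $f^{(n)}$.

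\textbf{Scalar identity.} Next I would establish, for a $C^{n}$ Banach-valued function on $[0,1]$,
\begin{equation*}
\varphi(1)=\sum_{k=0}^{n-1}\frac{\varphi^{(k)}(0)}{k!}+\int_0^1\frac{(1-\tau)^{n-1}}{(n-1)!}\varphi^{(n)}(\tau)\,d\tau.
\end{equation*}
This goes by induction on $n$:
\begin{itemize}
\item The base case is the fundamental theorem of calculus for Riemann integrals of continuous $Y$-valued functions.
\item The inductive step integrates by parts in $\int_0^1\frac{(1-\tau)^{n-1}}{(n-1)!}\varphi^{(n)}\,d\tau$, using the product rule with a scalar $C^{1}$ factor.
\end{itemize}
If one prefers to avoid vector-valued calculus, apply any $\ell\in Y'$. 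The scalar Taylor formula then holds for $\ell\circ\varphi$, and Hahn--Banach separation recovers the $Y$-valued identity. This works because $\ell$ commutes with the Riemann integral.

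\textbf{Conclusion and main obstacle.} Substituting $\varphi^{(k)}(0)=f^{(k)}(u)h^{k}$ gives exactly the stated expansion and remainder $R_{n}$. The only delicate point is the chain-rule identification of $\varphi^{(k)}$. This needs the symmetric multilinear structure of higher Fréchet derivatives and the joint continuity that lets one differentiate under evaluation. Everything else is routine.
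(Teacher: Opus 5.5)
The paper gives no proof of this statement; it is simply quoted from \cite[Theorem~4.C]{ref10}. Your argument is the standard proof of this result and is correct: restrict to the segment $\varphi(\tau)=f(u+\tau h)$, identify $\varphi^{(k)}(\tau)=f^{(k)}(u+\tau h)h^{k}$ via the chain rule composed with the bounded linear evaluation map at $(h,\dots,h)$, then apply the one-variable Taylor formula with integral remainder, either by repeated integration by parts or via functionals and Hahn--Banach. The only remark is that the ``delicate point'' you flag is lighter than you suggest: symmetry of $f^{(k)}$ plays no role since all arguments equal $h$, and no joint continuity is needed beyond the identification $\mathcal{L}(X,\mathcal{L}^{k}(X,Y))\cong\mathcal{L}^{k+1}(X,Y)$ and continuity of $f^{(n)}$.
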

\begin{definition}[\mbox{\cite[Theorem~2.4]{ref5}}]\label{predef2.10}
	A map $T:U\rightarrow Y$ is called analytic at $x_{0}\in U$, if there exist $r\in(0,dist(x_{0},\partial U))$ and symmetric $T_{k}(x_{0})\in \mathcal{B}_{k}(X,Y)$ for any $k\ge 1$, such that
	\begin{equation*}
		T(x_{0}+h)=T(x_{0})+\sum_{k=1}^{+\infty} T_k(x_{0})(h,\cdots,h)
	\end{equation*}
	uniformly for $h\in X$ with $\left\|h \right\|_{X}<r$, where the convergence is in $Y$. The map $T:U\longrightarrow Y$ is called
	analytic in $U$, if $T$ is analytic at every $x_{0}\in U$.
\end{definition}
\begin{theorem}[\mbox{\cite[Lojasiewicz-Simon Theorem]{ref3}}]
	\label{yibandeLS}
	Suppose that $F:\mathbb{R}^{m}\to \mathbb{R}$ is an analytic function in a
	neighborhood of a critical point $a$ (i.e., $\nabla F(a)=0$) in $\mathbb{R}^{m}$, Then there exist constants $\sigma>0$ and $\theta\in (0,\frac{1}{2})$, such that when $\left\|x-a \right\|_{\mathbb{R}^{m}}\le \sigma$,
	\begin{equation*}
		\left|F(x)-F(a) \right|^{1-\theta}\le \left\|\nabla F(x) \right\|_{\mathbb{R}^{m}}.
	\end{equation*}
\end{theorem}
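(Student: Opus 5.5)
We follow Łojasiewicz's classical argument, using the curve selection lemma and Puiseux expansions for subanalytic functions of one variable. After translating we may assume $a=0$ and $F(0)=0$, so that $\nabla F(0)=0$. It suffices to prove the inequality in the weaker form $c\,|F(x)|^{1-\theta_{0}}\le \|\nabla F(x)\|$ for some $c>0$ and some $\theta_{0}\in(0,1]$, valid on a small ball. The normalized form stated in Theorem~\ref{yibandeLS}, with $\theta\in(0,\frac12)$ and constant $1$, then follows by decreasing the exponent and shrinking $\sigma$. Indeed, for any $0<\theta<\theta_{0}$ we have
\[
c\,|F|^{1-\theta_0}=c\,|F|^{\theta_0-\theta}\,|F|^{1-\theta},
\]
and $c\,|F(x)|^{\theta_0-\theta}\ge 1$ fails only in the wrong direction. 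The correct reduction is therefore to take $\theta<\min\{\theta_0,\frac12\}$ and use $|F(x)|^{1-\theta}\le |F(x)|^{1-\theta_0}\cdot |F(x)|^{\theta_0-\theta}$. For $|x|$ small, $|F(x)|^{\theta_0-\theta}\le c$ by continuity of $F$ and $F(0)=0$. This gives $|F|^{1-\theta}\le c|F|^{1-\theta_0}\le\|\nabla F\|$.

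\emph{Step 1 (no nearby critical values).} We show that there is $\sigma_{0}>0$ such that $\nabla F(x)=0$ and $|x|\le\sigma_{0}$ imply $F(x)=0$. The critical set $\{\nabla F=0\}$ is analytic. Suppose critical points $x_k\to 0$ had $F(x_k)\neq 0$. By the curve selection lemma there is a real-analytic curve $\gamma:[0,\varepsilon)\to\mathbb{R}^m$ with $\gamma(0)=0$ and $\gamma(t)\in\{\nabla F=0,\ F\neq 0\}$ for $t>0$. Then $\frac{d}{dt}F(\gamma(t))=\nabla F(\gamma(t))\cdot\gamma'(t)=0$, so $F\circ\gamma\equiv F(0)=0$, a contradiction. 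Consequently the inequality is trivial wherever $F=0$, and wherever $\nabla F\neq 0$ it only remains to quantify the bound.

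\emph{Step 2 (one-variable reduction).} For small $s>0$ define
\[
\mu(s)=\inf\{\|\nabla F(x)\| : |x|\le \sigma_{0},\ |F(x)|=s\}.
\]
This is a subanalytic function of one variable. By Step 1 and compactness, $\mu(s)>0$ for small $s>0$. Subanalytic functions of one variable admit Puiseux expansions, so $\mu(s)=c\,s^{\beta}(1+o(1))$ with $c>0$ and $\beta\in\mathbb{Q}$. The task is to show $\beta<1$, since then $\theta_0=1-\beta$ is admissible in the weaker form.

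\emph{Step 3 (bounding the exponent).} Again by curve selection, applied to the subanalytic set of points realizing (or nearly realizing) the infimum defining $\mu$, we obtain an analytic curve $\gamma(t)\to0$ with $\|\nabla F(\gamma(t))\|\le 2\mu(|F(\gamma(t))|)$. Along this curve $F(\gamma(t))=bt^{p}+\dots$ with $b\neq 0$ and an integer $p\ge1$. We also have
\[
|pbt^{p-1}|\approx\bigl|\tfrac{d}{dt}F(\gamma(t))\bigr|\le\|\nabla F(\gamma(t))\|\,\|\gamma'(t)\|\le C\,\mu\bigl(|F(\gamma(t))|\bigr),
\]
which gives $\mu(s)\gtrsim s^{1-1/p}$. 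Hence $\beta\le 1-1/p<1$, and $\theta_0=1/p$ works.

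The main obstacle is Step 2–3: they rely on the subanalytic machinery, namely the curve selection lemma and Puiseux expansions of one-variable subanalytic functions. The real content lies there, and we would cite Łojasiewicz or Bierstone–Milman for it rather than reprove it.
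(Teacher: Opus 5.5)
The paper gives no proof of this statement; it is quoted from \cite{ref3}, so your argument has to stand on its own. Its outline is a standard route to the gradient inequality: no nonzero critical values near $a$, the one-variable subanalytic function $\mu$, curve selection, and differentiation of $F$ along the selected curve. Steps~1 and~2 are fine.

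Step~3 has a genuine gap. You select an analytic curve $\gamma$ with $\gamma(t)\to 0$ inside the set of (near-)minimizers of $\|\nabla F\|$ on $\{|x|\le\sigma_0,\ |F(x)|=s\}$. That requires $0$ to lie in the closure of this set, which need not hold, because the minimizers can escape to another zero of $F$ in the ball. For $F(x,y)=y^2(1+x)^2$, on $\{|F|=s\}$ one finds
\[
\|\nabla F\|^2=4s\bigl((1+x)^2+s(1+x)^{-2}\bigr).
\]
For $s<(1-\sigma_0)^4$ this is increasing in $x$ on the ball, so the minimizers are the two points near $(-\sigma_0,0)$ and stay away from $0$. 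Allowing a factor $2$ does not rescue this in general: replacing $(1+x)^2$ by $(1+x)^{2k}$ with $k$ large, for a given $\sigma_0$, keeps even your near-minimizers away from $0$.

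The repair is short.
\begin{itemize}
\item Take $s_k\to0$ and minimizers $x_k$. By compactness of the closed ball, a subsequence converges to some $z$ with $|z|\le\sigma_0$ and $F(z)=0$, and $z$ lies in the closure of the subanalytic minimizer set.
\item Apply curve selection at $z$ instead of $0$. This gives $\gamma$ analytic on $[0,\varepsilon)$ with $\gamma(0)=z$ and $\gamma(t)$ a minimizer for $t>0$. Hence $\|\gamma'\|$ is bounded near $0$, and $F(\gamma(t))=bt^p+\cdots$ with $b\neq0$.
\item State explicitly that $s(t)=|F(\gamma(t))|$ is continuous, positive for $t>0$, and tends to $0$, so it covers an interval $(0,s_1]$. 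Only then does your computation give $\mu(s)\ge c\,s^{1-1/p}$ for \emph{all} small $s$, which is what an inequality on a ball requires.
\end{itemize}
With this, Step~3 alone yields $\theta_0=1/p$, and the Puiseux expansion of $\mu$ in Step~2 becomes unnecessary.

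Also delete the first attempt in your reduction paragraph: the identity $c|F|^{1-\theta_0}=c|F|^{\theta_0-\theta}|F|^{1-\theta}$ is false. The corrected argument after it is right: for $\theta<\min\{\theta_0,\tfrac12\}$ and $x$ near $a$,
\[
|F|^{1-\theta}=|F|^{1-\theta_0}|F|^{\theta_0-\theta}\le c\,|F|^{1-\theta_0}\le\|\nabla F\|.
\]
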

\begin{theorem}[\mbox{\cite[Theorem~2.2]{ref5}}]
	\label{prethm2.8}
	Let $d$ be an integer, let $\Omega\subset \mathbb{R}^{d}$ be a finitely measured space.
	space, and consider a real Hilbert space $V\subset L^{2}(\Omega,\mathbb{R}^{d})$ satisfying
	\begin{itemize}
		\item[(i)] $V$ is dense in $L^{2}.(\Omega,\mathbb{R}^{d})$.
		\item[(ii)] the embedding $V\hookrightarrow L^{2}(\Omega,\mathbb{R}^{d})$ is compact.
	\end{itemize}
	Let $a(u,v)$ be a symmetric and coercive bilinear continuous form on $V$ defined by
	\begin{equation*}
		a(u,v)=(Au,v)_{L^{2}},\quad \forall u,v\in V,
	\end{equation*}
	$A$ is an algebraic and topological isomorphism from $V$ into $V'$ and it can also be considered as a self-adjoint unbounded operator in $L^{2}(\Omega,\mathbb{R}^{d})$ with
	domain $D=D(A)\subset V$,
	\begin{equation*}
		D=\left\{ v\in V|Av\in L^{2}(\Omega,\mathbb{R}^{d}) \right\}.
	\end{equation*}
	Assume that there exists $p\ge 2$ such that
	\begin{itemize}
		\item[(H1)] the embedding $A^{-1}(L^{p}(\Omega,\mathbb{R}^{d}))\hookrightarrow L^{\infty}(\Omega,\mathbb{R}^{d})$ is continuous.
		\item[(H2)] For all $a\in L^{\infty}(\Omega,\mathbb{R}^{d}\times \mathbb{R}^{d})$, $h\in L^{\infty}(\Omega,\mathbb{R}^{d})$, if $u\in D$ is a solution of $Au+a(x)u=h$, then $u\in L^{p}(\Omega,\mathbb{R}^{d})$.\\
		Let
		\begin{align*}
			F: \Omega &\times \mathbb{R}^{d}\longrightarrow L^{2}(\Omega)\notag\\&(x,s)\longmapsto F(x,s)	\end{align*}
		which satisfies
		\item[(H3)] $F$ is analytic with respect to $s$ "uniformly" in $x\in \Omega$, and $\nabla F(\cdot,\cdot)$, $\nabla^{2}F(\cdot,\cdot)$ are bounded on $\Omega\times (-\beta,\beta)^{d}$, $\forall$ $\beta\in \mathbb{R}^{+}$. $(\nabla F=(\frac{\partial F}{\partial s_{1}},\cdots,\frac{\partial F}{\partial s_{d}}))$.
	\end{itemize}
	Consider the elliptic-like problem
	\begin{equation*}
		Au=f(u),
	\end{equation*}
	where $A:A^{-1}(L^{p}(\Omega,\mathbb{R}^{d}))\longrightarrow L^{p}(\Omega,\mathbb{R}^{d})$ is related to $F$ through the formula
	\begin{equation*}
		f(u)(x)=\nabla F(x,u(x)).
	\end{equation*}
	We denote by
	\begin{equation*}
		E(u)=\frac{1}{2}\int \left((Au)u\right)dx-\int_{\Omega}F(x,u)dx.
	\end{equation*}and $S=\left\{\psi\in D \cap L^{\infty}(\Omega,\mathbb{R}^{d}) \ | \  A\psi=f(\psi) \right\}$.
	Let $A,F$ and $f$ be as above. Let $\varphi \in S$, and assume that there exists a ball $B=\left\{u\in V,\left\|u-\varphi \right\|_{V}<\rho \right\}$ such that $f\in C^{1}(B,V')$. Then there exists $\theta\in (0,\frac{1}{2})$ and $\sigma>0$ such that for all $v\in V$, $\left\|u-\varphi \right\|_{V}<\sigma$
	\begin{equation*}
		\left\|-Au+f(u) \right\|_{V'}\ge \left|E(u)-E(\varphi) \right|^{1-\theta}.
	\end{equation*}
\end{theorem}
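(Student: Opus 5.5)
The plan is the classical Lyapunov--Schmidt reduction: reduce the problem to the finite-dimensional kernel of the linearized operator, then apply the finite-dimensional Lojasiewicz inequality, Theorem~\ref{yibandeLS}, to the reduced energy. Throughout, $\mathcal{M}(u):=-Au+f(u)$.

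\textbf{Step 1: the linearization and its kernel.} Fix $\varphi\in S$ and set $L:=A-f'(\varphi)$. Since $\varphi\in L^{\infty}$, (H3) makes $f'(\varphi)$ multiplication by a bounded matrix function. That multiplication is compact as a map $V\to V'$, because $V\hookrightarrow L^{2}$ is compact. Hence $L:V\to V'$ is a compact perturbation of the isomorphism $A$, so it is Fredholm of index zero, and $N:=\ker L$ is finite dimensional. Every $w\in N$ solves $Aw+a(x)w=0$ with $a=-f'(\varphi)\in L^{\infty}$. By (H2), $w\in L^{p}$; then by (H1), $w\in L^{\infty}$ and $w\in W:=A^{-1}(L^{p})$. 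Let $P$ be the $L^{2}$-orthogonal projection onto $N$. Then $L+P$ is injective, and by the index-zero property it is an isomorphism $V\to V'$. The same argument, combined with (H1)--(H2), shows it is also an isomorphism $W\to L^{p}$.

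\textbf{Step 2: implicit function theorem and the reduced energy.} Define $\mathcal{N}(u):=Pu-\mathcal{M}(u)=Pu+Au-f(u)$. On $W\hookrightarrow L^{\infty}$, hypothesis (H3) makes the Nemytskii operator $u\mapsto f(u)$ analytic into $L^{p}$. So $\mathcal{N}:W\to L^{p}$ is analytic near $\varphi$, with derivative $L+P$ invertible there. The analytic inverse function theorem then gives a local analytic inverse $\Psi$ defined near $\mathcal{N}(\varphi)=P\varphi$ in $L^{p}$.

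Since $f\in C^{1}(B,V')$, the same argument works at the $V$-level. There, $\mathcal{N}:V\to V'$ is a local $C^{1}$ diffeomorphism whose inverse extends $\Psi$ (by uniqueness). It follows that, for $u$ close to $\varphi$ in $V$,
\[
\|u-\Psi(Pu)\|_{V}\le C\|\mathcal{N}(u)-Pu\|_{V'}=C\|\mathcal{M}(u)\|_{V'}.
\]
Define the reduced energy $\Gamma(\xi):=E(\Psi(P\varphi+\xi))$ for $\xi\in N$ near $0$. It is a real analytic function on a finite-dimensional space, and $0$ is a critical point. Theorem~\ref{yibandeLS} therefore yields $\theta\in(0,\tfrac12)$ and $\sigma_{0}>0$ with $|\Gamma(\xi)-\Gamma(0)|^{1-\theta}\le|\nabla\Gamma(\xi)|$, and $\Gamma(0)=E(\varphi)$.

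\textbf{Step 3: transfer back to $u$.} For $u$ with $\|u-\varphi\|_{V}<\sigma$, set $\xi=P(u-\varphi)$ and $v=\Psi(Pu)$. Then $\mathcal{M}(v)=Pv-Pu\in N$, and by construction $\nabla\Gamma(\xi)$ is controlled by $\|\mathcal{M}(v)\|$. Since all norms on $N$ are equivalent, and using the Lipschitz bound on $\mathcal{M}$ in $V'$ together with Step~2, we get $|\nabla\Gamma(\xi)|\le C\|\mathcal{M}(v)\|_{V'}\le C\|\mathcal{M}(u)\|_{V'}$.

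Next, a Taylor expansion (Theorem~\ref{prethm2.6}) of $E$ around $u$ gives
\[
|E(u)-E(v)|\le \|\mathcal{M}(u)\|_{V'}\|u-v\|_{V}+C\|u-v\|_{V}^{2}\le C\|\mathcal{M}(u)\|_{V'}^{2}.
\]
Since $2\ge 1/(1-\theta)$ and $\|\mathcal{M}(u)\|_{V'}$ is small, combining these estimates gives
\[
|E(u)-E(\varphi)|^{1-\theta}\le |E(u)-E(v)|^{1-\theta}+|\Gamma(\xi)-\Gamma(0)|^{1-\theta}\le C\|\mathcal{M}(u)\|_{V'}.
\]
The constant $C$ is absorbed by slightly decreasing $\theta$ and $\sigma$.

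\textbf{Main obstacle.} I expect the hardest part to be the regularity mismatch. Analyticity, which the finite-dimensional inequality needs, holds only on $W\subset L^{\infty}$, whereas $u$ is merely close to $\varphi$ in $V$. The step needing the most care is checking that the $V$-level $C^{1}$ inverse and the $W$-level analytic inverse $\Psi$ coincide on their common domain, which follows from uniqueness in the inverse function theorem, and that $Pu$ lies in the domain of $\Psi$. The latter holds because $N$ is finite dimensional, so the $V$-smallness of $P(u-\varphi)$ implies its $L^{p}$-smallness.
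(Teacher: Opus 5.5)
Your argument is correct. The paper gives no proof of this statement, since it is quoted from \cite{ref5}, and your Lyapunov--Schmidt reduction (the $L^{2}$-projection onto the finite-dimensional kernel of $A-f'(\varphi)$, local inversion of $\mathcal{N}=P+A-f$ both analytically as a map $W=A^{-1}(L^{p})\to L^{p}$ and in the $C^{1}$ sense as a map $V\to V'$, the analytic reduced energy $\Gamma$ on the kernel, and then the finite-dimensional Lojasiewicz inequality) is the standard proof of that result and the same scheme the paper uses in case~2 of Lemma~\ref{LSbudengshipaowupaowu}, where $\mathcal{N}(v)=M(v)+\Pi v$ and $\Gamma(\xi)=E(\Psi(\sum_{j}\xi_{j}\phi_{j})+\psi)$; there the single pair $H_{D}^{4}(\Omega)\to L^{2}(\Omega)$ already embeds in $L^{\infty}$, so your two-level $W$/$V$ bookkeeping is not needed.
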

\begin{theorem}[\mbox{\cite[Theorem~1.4]{ref2}}]
	\label{newtheorem10.1}
	Let $\Omega\subset \mathbb{R}^{d}$ \textup{($d=1,2$)} be an arbitrary bounded smooth domain. Let $B>0$, $T\ge 0$, $\lambda>0$, $\kappa\in(0,1)$. For any given $u_{0}\in H_{D}^{2}(\Omega)$ satisfying $u_{0}\ge -1+\kappa$. Then the following hold:
	\begin{itemize}
		\item[(\romannumeral1)] There are $\tau_{m}>0$ and a unique maximal solution $u$ to \eqref{neweq1.1} with regularity
		\begin{equation*}
			u\in C([0,\tau_{m}),H^{2}(\Omega))\cap C((0,\tau_{m}),H^{4}(\Omega)) \cap C^{1}((0,\tau_{m}),L^{2}(\Omega)).
		\end{equation*}
		\item[(\romannumeral2)] If $\tau_{m}<\infty$, then
		\begin{equation*}
			\liminf_{t \to \tau_{m}}\left(\min_{\Omega}u(t)\right)=-1.
		\end{equation*}
	\end{itemize}
\end{theorem}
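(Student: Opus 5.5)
The statement is the local well-posedness and quenching alternative of Laurençot and Walker, and I would prove it by treating \eqref{neweq1.1} as a semilinear abstract parabolic problem
\begin{equation*}
u_t + A u = f(u), \qquad A = B\Delta^2 - T\Delta, \qquad f(u) = -\lambda (1+u)^{-2}.
\end{equation*}
The operator $A$ is realized in $L^2(\Omega)$ with domain $H_D^4(\Omega)$. With clamped boundary conditions, $A$ is self-adjoint and positive, so $-A$ generates an analytic contraction semigroup $e^{-tA}$ on $L^2$. The fractional power space $D(A^{1/2})$ coincides with $H_D^2(\Omega)$, with norm equivalent to $\|\cdot\|_{H_D^2}$, and we have the smoothing estimate
\begin{equation*}
\|e^{-tA}\|_{\mathcal{L}(L^2,H_D^2)}\le C t^{-1/2}, \qquad 0<t\le 1.
\end{equation*}
Since $d\le 2$, the embedding $H_D^2\hookrightarrow C(\overline\Omega)$ holds with constant $C_0$. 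Hence on each set
\begin{equation*}
\mathcal{O}_\delta=\{v\in H_D^2:\ \min_\Omega v> -1+\delta\}
\end{equation*}
the map $v\mapsto f(v)$ is bounded and Lipschitz from the $H_D^2$-topology into $L^\infty\subset L^2$, with constants depending only on $\delta$ and $\lambda$.

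\textbf{Local existence.} Fix $u_0$ with $u_0\ge -1+\kappa$. On a closed ball of $C([0,\tau],H_D^2)$ around $t\mapsto e^{-tA}u_0$, I would apply Banach's fixed point theorem to the mild formulation
\begin{equation*}
u(t)=e^{-tA}u_0+\int_0^t e^{-(t-s)A}f(u(s))\,ds.
\end{equation*}
The radius is chosen so small, say below $\kappa/(2C_0)$, that the ball lies in $\mathcal{O}_{\kappa/2}$. The $s^{-1/2}$ singularity is integrable, so the Duhamel term is a contraction for small $\tau$, and $\tau$ depends only on $\kappa$ and $\|u_0\|_{H_D^2}$. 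Uniqueness follows from a Gronwall-type argument on the same integral equation.

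\textbf{Regularity.} For $t>0$ I would bootstrap. Since $u\in C([0,\tau],H^2_D)$ and $f$ is Lipschitz, $f(u)$ is Hölder continuous into $L^2$ on $[\varepsilon,\tau]$. Standard analytic-semigroup theory (Pazy/Lunardi) then gives $u\in C((0,\tau],H^4)\cap C^1((0,\tau],L^2)$. Gluing local solutions yields a unique maximal solution on $[0,\tau_m)$ with the regularity in (i).

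\textbf{Blow-up alternative.} For (ii), the usual continuation argument gives the following: if $\tau_m<\infty$, then $u(t)$ must leave every set of the form $\{\|v\|_{H_D^2}\le R\}\cap\mathcal{O}_\delta$, because the local existence time is uniform on such sets. Suppose, for contradiction, that $\liminf_{t\to\tau_m}\min_\Omega u(t)>-1$. Then there is $\delta>0$ with $\min u(t)\ge-1+\delta$ near $\tau_m$, and by continuity also on the remaining compact interval. Hence $\|f(u(t))\|_{L^\infty}\le\lambda\delta^{-2}$ on all of $[0,\tau_m)$. Inserting this bound into the mild formulation gives
\begin{equation*}
\|u(t)\|_{H_D^2}\le C\|u_0\|_{H_D^2}+C\lambda\delta^{-2}\sqrt{\tau_m},
\end{equation*}
a uniform bound. (The energy $\frac12\|u\|_{H_D^2}^2-\lambda\int_\Omega (1+u)^{-1}$ is nonincreasing and would give the same bound.) So $u$ stays in a fixed set $\{\|v\|_{H_D^2}\le R\}\cap\mathcal{O}_{\delta}$, and the solution extends beyond $\tau_m$, which is a contradiction.

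I expect the only delicate point to be this last step: one must check that the local existence time depends only on $(R,\delta)$, so that the continuation genuinely contradicts maximality, and that a positive $\liminf$ controls $\min u$ uniformly on the whole interval $[0,\tau_m)$.
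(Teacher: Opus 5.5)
\textbf{Overall.} The paper gives no proof of this statement; it quotes it from Lauren\c{c}ot--Walker. Your route (mild formulation in $H_D^2=D(A^{1/2})$, Banach fixed point, parabolic smoothing, continuation) is the standard one behind that result. The weak point is the step you flag yourself. Part (ii) rests on it, and the construction you wrote does not deliver it.

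\textbf{The gap.} You assert that the ball of radius $r<\kappa/(2C_0)$ around $t\mapsto e^{-tA}u_0$ lies in $\mathcal{O}_{\kappa/2}$. For $v$ in that ball you only know $\min_\Omega v(t)\ge \min_\Omega e^{-tA}u_0-C_0r$, so you need $\min_\Omega e^{-tA}u_0\ge -1+\kappa$ on $[0,\tau]$. This is not available, for two reasons.
\begin{itemize}
\item The clamped biharmonic semigroup is not positivity preserving, so the lower bound $u_0\ge -1+\kappa$ does not propagate.
\item If you fall back on strong continuity of $t\mapsto e^{-tA}u_0$ in $H_D^2\hookrightarrow C(\overline{\Omega})$, the resulting time depends on $u_0$ itself. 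Indeed $e^{-tA}\to I$ strongly but not uniformly on bounded subsets of $H_D^2$: the supremum of $\|e^{-tA}u_0-u_0\|_{H_D^2}$ over the unit ball equals $1$ for every $t>0$.
\end{itemize}
So as written you obtain $\tau=\tau(u_0)$, not $\tau(\kappa,\|u_0\|_{H_D^2})$. Then the continuation in (ii) does not close: the existence times starting from $u(t_0)$, $t_0\to\tau_m$, could shrink to zero.

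\textbf{The missing idea.} For $d\le 2$ the embedding $H^2\hookrightarrow C(\overline{\Omega})$ is not critical, and you should work with $L^\infty$-closeness rather than $H_D^2$-closeness. By spectral calculus,
\begin{itemize}
\item $\|e^{-tA}u_0-u_0\|_{L^2}\le t^{1/2}\|A^{1/2}u_0\|_{L^2}=t^{1/2}\|u_0\|_{H_D^2}$;
\item $\|e^{-tA}u_0-u_0\|_{H_D^2}\le 2\|u_0\|_{H_D^2}$.
\end{itemize}
Agmon's inequality $\|w\|_{L^\infty}\le C\|w\|_{L^2}^{1/2}\|w\|_{H^2}^{1/2}$ (valid for $d\le 2$), together with $\|w\|_{H^2}\le C\|w\|_{H_D^2}$ on $H_D^2$, then gives
\[
\|e^{-tA}u_0-u_0\|_{L^\infty}\le C\,t^{1/4}\,\|u_0\|_{H_D^2}.
\]
Take the radius $\kappa/(4C_0)$ and $\tau$ so small that $C\tau^{1/4}\|u_0\|_{H_D^2}\le \kappa/4$, in addition to the self-map and contraction conditions. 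Then every $v$ in the ball satisfies $v(t)\ge u_0-\kappa/2\ge -1+\kappa/2$. The time $\tau$ now depends only on $\kappa$, $\lambda$, $|\Omega|$ and $\|u_0\|_{H_D^2}$, and your continuation argument goes through as you wrote it.

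\textbf{Alternative that avoids uniformity.} Once $\|f(u)\|_{L^2}$ is bounded on $[0,\tau_m)$, the Duhamel term is H\"older continuous into $D(A^{1/2})$. Hence $u(t)$ has a limit in $H_D^2$ as $t\to\tau_m$, with minimum $\ge -1+\delta$. Restarting from that limit contradicts maximality.

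\textbf{A smaller point in the regularity step.} Continuity of $u$ plus Lipschitz continuity of $f$ does not make $f(u)$ H\"older continuous. You need H\"older continuity of the mild solution itself on $[\varepsilon,\tau]$. This follows from the analyticity of $e^{-tA}u_0$ for $t>0$ and the same Duhamel estimate.
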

\begin{theorem}[\mbox{\cite[Proposition~3.1]{ref2}}]\label{newtheorem2.2}
	Let \begin{equation*}
		S(\kappa):=\left\{u\in H_{D}^{2}(\Omega): u>-1+\kappa\,\,\textup{in $\Omega$}\right\}\times L^{2}(\Omega).
	\end{equation*}
	Let $\Omega\subset \mathbb{R}^{d}$ ($d=1,2$) be an arbitrary bounded smooth domain. Let $B>0$, $T\ge 0$, $\lambda>0$, $\kappa\in(0,1)$. For any given $(u_{0},u_{1})\in S(\kappa)$, the following hold:
	\begin{itemize}
		\item[\textup{(\romannumeral1)}] There are $\tau_{m}>0$ and a unique maximal mild solution  $\mathbf{u}=(u,u_{t})$ to \eqref{neweq1.2} and
		\begin{equation*}
			u\in C([0,\tau_{m}),H^{2}(\Omega))\cap C^{1}([0,\tau_{m}),L^{2}(\Omega)).
		\end{equation*}
		\item[\textup{(\romannumeral2)}] If $\tau_{m}<\infty$, then
		\begin{equation*}
			\liminf_{t \to \tau_{m}}\left(\min_{\Omega}u(t)\right)=-1
		\end{equation*}
		or
		\begin{equation*}
			\limsup_{t \to \tau_{m}} \left\| (u(t),\partial_{t}u(t))\right\|_{H_{D}^{2}(\Omega)\times L^{2}(\Omega)}=\infty.
		\end{equation*}
	\end{itemize}
\end{theorem}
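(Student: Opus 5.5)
The statement to prove is the last one displayed, Theorem~\ref{newtheorem2.2}, quoted from \cite[Proposition~3.1]{ref2}. I would prove it by the standard semilinear evolution-equation argument for the first-order system $\dot{\mathbf{u}}+\mathbb{A}\mathbf{u}=g(\mathbf{u})$ on the phase space $\mathbb{H}:=H_{D}^{2}(\Omega)\times L^{2}(\Omega)$.

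\textbf{Step 1 (linear part).} Equip $\mathbb{H}$ with the energy norm $\|(v,w)\|^{2}=\|v\|_{H_{D}^{2}}^{2}+\|w\|_{L^{2}}^{2}$. This is a genuine norm on $H_{D}^{2}$ because $B>0$ and the clamped boundary conditions give $\|\Delta v\|_{L^2}\simeq\|v\|_{H^2}$. With domain $D(\mathbb{A})=H_{D}^{4}(\Omega)\times H_{D}^{2}(\Omega)$, integration by parts gives
\begin{equation*}
(\mathbb{A}\mathbf{v},\mathbf{v})_{\mathbb{H}}=\|w\|_{L^{2}}^{2}\ge 0 .
\end{equation*}
Moreover $I+\mathbb{A}$ is onto, since it reduces to solving the elliptic problem $(1+A)v=\cdot$ with clamped boundary conditions. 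By Lumer--Phillips, $-\mathbb{A}$ therefore generates a $C_{0}$ contraction semigroup $e^{-t\mathbb{A}}$ on $\mathbb{H}$.

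\textbf{Step 2 (nonlinearity).} For $d=1,2$ we have $H_{D}^{2}\hookrightarrow C(\overline{\Omega})$ with constant $C_{0}$. On the set $\{v:\min_\Omega v\ge -1+\kappa/2\}$ the Nemytskii map $v\mapsto f(v)=-\lambda(1+v)^{-2}$ is smooth from $H_{D}^{2}$ into $L^{\infty}\subset L^{2}$. It is in particular locally Lipschitz, with Lipschitz constant controlled by $\lambda(\kappa/2)^{-3}$ times the embedding constant. Hence $g$ is locally Lipschitz on the open set $\{(v,w)\in\mathbb{H}:\min v>-1+\kappa/2\}\supset S(\kappa)$.

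\textbf{Step 3 (local existence and uniqueness).} Set up the Duhamel formula
\begin{equation*}
\mathbf{u}(t)=e^{-t\mathbb{A}}\mathbf{u}_{0}+\int_{0}^{t}e^{-(t-s)\mathbb{A}}g(\mathbf{u}(s))\,ds .
\end{equation*}
Apply the Banach fixed point theorem in $C([0,\tau],\overline{B}_{\mathbb{H}}(\mathbf{u}_{0},r))$, where $r$ is chosen so that $C_{0}r<\min_\Omega u_{0}-(-1+\kappa/2)$. This is positive because $u_{0}$ is continuous, $u_0>-1+\kappa$, and $\overline{\Omega}$ is compact, so in fact $\min u_0>-1+\kappa$. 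Choosing $\tau$ small makes the map a contraction. Uniqueness follows from Gronwall, and the standard continuation argument gives a maximal time $\tau_{m}$. The first component then satisfies $u\in C([0,\tau_{m}),H^{2})\cap C^{1}([0,\tau_{m}),L^{2})$, because $\dot u=u_{t}$ holds in the mild sense.

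\textbf{Step 4 (blow-up alternative; main obstacle).} Suppose $\tau_{m}<\infty$ and both alternatives fail. Then there are $\delta>0$ and $M$ such that, for $t$ near $\tau_{m}$, $\min u(t)\ge -1+\delta$ and $\|\mathbf{u}(t)\|_{\mathbb{H}}\le M$. The subtle point is that the first alternative is stated with $\liminf$, while the second uses $\limsup$. Negating the first gives $\min u\ge-1+\delta$ only eventually. Negating the second gives a bound on the norm only eventually, and by continuity on $[0,t_{*}]$ this becomes a bound on all of $[0,\tau_m)$.

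On such a set, $g$ is bounded and Lipschitz with constants depending only on $\delta$ and $M$. The existence time from Step 3 then depends only on these constants, namely on $\delta$ and the radius $r$ via $C_{0}r<\delta/2$. Restarting the fixed point argument from $\mathbf{u}(\tau_{m}-\epsilon)$ with $\epsilon$ smaller than this uniform time extends the solution beyond $\tau_{m}$, contradicting maximality. The Lipschitz constant here must be taken relative to the threshold $-1+\delta/2$ rather than $\kappa$. The care needed is to make this uniform time depend only on $(\delta,M)$ and not on the particular initial datum.
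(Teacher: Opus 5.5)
The paper gives no proof of this statement; it is imported from \cite[Proposition~3.1]{ref2}. Your Steps 1--3 are the standard argument behind it and are fine (the range condition actually reduces to $(A+2)v=h_2+2h_1$, $w=v-h_1$, which is immaterial). The genuine gap is in Step 4, at exactly the point you flag: the existence time produced by Step 3 does \emph{not} depend only on $(\delta,M)$. With the ball centred at the datum $\mathbf{w}_0$, the self-mapping condition is
\begin{equation*}
\sup_{0\le t\le\tau}\big\|e^{-t\mathbb{A}}\mathbf{w}_0-\mathbf{w}_0\big\|_{\mathbb{H}}+\tau\,\sup\|g\|_{\mathbb{H}}\le r .
\end{equation*}
The first term becomes small only for $\tau$ depending on $\mathbf{w}_0$ itself, because $e^{-t\mathbb{A}}$ is merely strongly continuous (the damped plate semigroup has no smoothing). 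For example, let $A\phi_k=\mu_k\phi_k$ with $\|\phi_k\|_{L^2}=1$. The datum $\mathbf{w}_0=(a\mu_k^{-1/2}\phi_k,0)$ has $\|\mathbf{w}_0\|_{\mathbb{H}}=a$ and $\|w_0\|_{L^\infty}\le C_0a$ for every $k$. Yet the free flow maps it to $-e^{-\pi/(2\omega_k)}\mathbf{w}_0$ at time $\pi/\omega_k$, where $\omega_k=(\mu_k-\tfrac14)^{1/2}\to\infty$. Hence $\|e^{-t\mathbb{A}}\mathbf{w}_0-\mathbf{w}_0\|_{\mathbb{H}}\approx 2a$ at arbitrarily small times. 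The usual device of centring the ball at $0$ with radius proportional to $\|\mathbf{w}_0\|$ is not available either. That device is what gives uniformity for nonlinearities Lipschitz on bounded sets of the whole space, but a norm ball does not enforce $\min v>-1+\delta/2$. So restarting from $\mathbf{u}(\tau_m-\epsilon)$ with $\epsilon$ independent of the datum is unjustified as written.

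The missing idea is to show directly that $\mathbf{u}(t)$ has a limit as $t\to\tau_m$; after that, a single non-uniform application of Step 3 suffices. Suppose $\min u\ge-1+\delta$ on $[0,\tau_m)$. Then $h(s):=g(\mathbf{u}(s))$ obeys $\|h(s)\|_{\mathbb{H}}=\|f(u(s))\|_{L^2}\le\lambda\delta^{-2}|\Omega|^{1/2}$, so $h\in L^1(0,\tau_m;\mathbb{H})$. Consequently $t\mapsto\int_0^t e^{-(t-s)\mathbb{A}}h(s)\,ds$ is continuous on the closed interval $[0,\tau_m]$ (approximate $h$ in $L^1$ by continuous functions). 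Thus $\mathbf{u}(t)\to\mathbf{u}^*$ in $\mathbb{H}$ with $\min u^*\ge-1+\delta$. Solving from $\mathbf{u}^*$ by Step 3 and concatenating via the semigroup property gives a mild solution beyond $\tau_m$, a contradiction. Equivalently, replace $f$ by the globally Lipschitz truncation $s\mapsto f(\max\{s,-1+\delta/2\})$, solve the modified problem globally, and invoke uniqueness. Neither route uses the bound $M$, so in fact $\tau_m<\infty$ already forces $\liminf_{t\to\tau_m}\min_\Omega u(t)=-1$.
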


\begin{theorem}[\mbox{\cite[Corollary~3.2]{ref2}}]\label{newtheorem18.2}
	Based on the conditions listed in Theorem~\ref{newtheorem2.2}, if $$(u_{0},u_{1})\in (H_{D}^{4}(\Omega)\times H_{D}^{2}(\Omega))\cap S(\kappa),$$
	then the mild solution $\mathbf{u}$ to \eqref{neweq1.2} is a classical solution with regularity
	\begin{equation*}
		u\in C([0,\tau_{m}),H^{4}(\Omega))\cap C^{1}([0,\tau_{m}),H^{2}(\Omega))\cap C^{2}([0,\tau_{m}),L^{2}(\Omega)).
	\end{equation*}
\end{theorem}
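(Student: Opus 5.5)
The plan is to derive this from the standard regularity theory for semilinear evolution equations with a $C^1$ nonlinearity (Pazy, Theorem~6.1.5). We work in the phase space $\mathcal{H}:=H_{D}^{2}(\Omega)\times L^{2}(\Omega)$ with the energy norm $\|(v,w)\|^{2}=\|v\|_{H_D^2}^{2}+\|w\|_{L^2}^{2}$. The problem \eqref{neweq1.2} is written as $\dot{\mathbf{u}}+\mathbb{A}\mathbf{u}=g(\mathbf{u})$.

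\textbf{Step 1 (the linear part).} Since $A=B\Delta^{2}-T\Delta$ with clamped boundary conditions is positive and self-adjoint in $L^{2}(\Omega)$, with form domain $H_D^2(\Omega)$ and domain $H_D^4(\Omega)$ (elliptic regularity), the operator $-\mathbb{A}$ is dissipative on $\mathcal{H}$. Its domain is $D(\mathbb{A})=H_D^4(\Omega)\times H_D^2(\Omega)$, and $I+\mathbb{A}$ is onto. By Lumer--Phillips, $-\mathbb{A}$ generates a $C_0$-semigroup of contractions $S(t)$ on $\mathcal{H}$. The mild solution of Theorem~\ref{newtheorem2.2} is then the solution of
\[
\mathbf{u}(t)=S(t)\mathbf{u}_0+\int_0^t S(t-s)g(\mathbf{u}(s))\,ds .
\]

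\textbf{Step 2 (smoothness of $g$ near the trajectory).} Fix $\tau<\tau_m$. The map $t\mapsto u(t)$ is continuous into $H^2(\Omega)\hookrightarrow C(\overline\Omega)$ for $d\le2$, and $\min_\Omega u(t)>-1$ for each $t\in[0,\tau]$. By compactness there is therefore $\kappa'>0$ with $u(t)\ge -1+\kappa'$ on $[0,\tau]\times\Omega$. On the open set $\mathcal{O}:=\{v\in H_D^2:\min v>-1+\kappa'/2\}\times L^2$, the Nemytskii map $v\mapsto f(v)=-\lambda(1+v)^{-2}$ is $C^1$ from $H_D^2$ into $L^\infty\subset L^2$. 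Its derivative is $h\mapsto 2\lambda(1+v)^{-3}h$, and continuity of this derivative follows from the uniform bound $1+v\ge\kappa'/2$ together with $H_D^2\hookrightarrow L^\infty$. Hence $g\in C^1(\mathcal{O},\mathcal{H})$, with $g'$ bounded and uniformly continuous on a neighbourhood of the compact set $\{\mathbf{u}(t):t\in[0,\tau]\}$.

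\textbf{Step 3 (classical regularity).} Let $\mathbf{u}_0\in D(\mathbb{A})$. We show that $\mathbf{u}$ is differentiable by identifying its derivative. Define $\mathbf{v}$ as the unique continuous solution of the linearized integral equation
\[
\mathbf{v}(t)=S(t)\big(-\mathbb{A}\mathbf{u}_0+g(\mathbf{u}_0)\big)+\int_0^t S(t-s)g'(\mathbf{u}(s))\mathbf{v}(s)\,ds .
\]
It exists on $[0,\tau]$ by a contraction/Gronwall argument, since $g'(\mathbf{u}(\cdot))$ is bounded and strongly continuous. Next, consider the difference quotient $w_h(t):=h^{-1}(\mathbf{u}(t+h)-\mathbf{u}(t))-\mathbf{v}(t)$. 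We write it using the mild formula and use the mean value form of Theorem~\ref{prethm2.6}: $g(\mathbf{u}(s+h))-g(\mathbf{u}(s))=\int_0^1 g'(\mathbf{u}(s)+\sigma(\mathbf{u}(s+h)-\mathbf{u}(s)))\,d\sigma\,(\mathbf{u}(s+h)-\mathbf{u}(s))$. Combining this with $h^{-1}(S(h)\mathbf{u}_0-\mathbf{u}_0)\to-\mathbb{A}\mathbf{u}_0$, a Gronwall estimate gives $\sup_{[0,\tau-h]}\|w_h\|\to0$ as $h\to0$. Hence $\mathbf{u}\in C^1([0,\tau],\mathcal{H})$ with $\dot{\mathbf{u}}=\mathbf{v}$. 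It follows that $t\mapsto g(\mathbf{u}(t))$ is $C^1$ into $\mathcal{H}$, so the inhomogeneous linear theory (Pazy, Corollary~4.2.5) gives $\mathbf{u}(t)\in D(\mathbb{A})$ with $\mathbf{u}\in C([0,\tau],D(\mathbb{A}))$.

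\textbf{Step 4 (translation to components).} Reading this off componentwise gives $u\in C([0,\tau],H^4)$ and $u_t\in C([0,\tau],H^2)$, the latter being the first component of $\dot{\mathbf{u}}$. The second component of $\dot{\mathbf{u}}$ is $u_{tt}=-Au-u_t+f(u)\in C([0,\tau],L^2)$, so $u\in C^2([0,\tau],L^2)$. Since $\tau<\tau_m$ was arbitrary, this gives exactly the regularity asserted in Theorem~\ref{newtheorem18.2}.

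\textbf{Main obstacle.} The difficulty is that $g$ is only locally $C^1$, because of the singularity of $f$ at $u=-1$, while Pazy's theorem is stated for globally $C^1$ maps. I would handle this by replacing $f$ by a smooth globally Lipschitz cutoff $\tilde f$ that agrees with $f$ on $[-1+\kappa'/2,\infty)$. The solutions of the original and truncated problems coincide on $[0,\tau]$ by uniqueness, so the global theorem can be applied to the truncated problem. With this reduction, the remaining work is the difference-quotient convergence in Step 3.
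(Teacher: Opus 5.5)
Your argument is correct. The paper itself gives no proof of this statement: it is quoted verbatim from \cite[Corollary~3.2]{ref2}, so the text offers nothing beyond the citation to compare against.

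What you supply is a self-contained proof along the standard route for such results, namely the classical-regularity theorem for semilinear Cauchy problems with a continuously differentiable nonlinearity (as in Pazy's Theorem~6.1.5). Its steps are:
\begin{itemize}
\item Lumer--Phillips for $-\mathbb{A}$ on $H_D^2(\Omega)\times L^2(\Omega)$ with the energy inner product.
\item Local $C^1$-smoothness of $g$ near the compact orbit $\{\mathbf{u}(t):t\in[0,\tau]\}$.
\item Identification of $\dot{\mathbf{u}}$ with the solution of the linearized integral equation, via difference quotients and Gronwall.
\item The inhomogeneous linear theory, to place $\mathbf{u}$ in $C([0,\tau],D(\mathbb{A}))$ and read off the componentwise regularity.
\end{itemize}

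Compared with the bare citation, your proof makes explicit the one structural input that matters: a uniform bound $1+u\ge\kappa'>0$ on each compact subinterval of $[0,\tau_m)$. You take $u(t)>-1$ on $[0,\tau_m)$ from the well-posedness result, Theorem~\ref{newtheorem2.2}. That property is only implicit in the paper's formulation of that theorem, so you should say explicitly that you are using it.

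Two minor points:
\begin{itemize}
\item The truncation in your last paragraph is unnecessary. Your Step~3 only evaluates $g'$ on a neighbourhood of that compact orbit, so the local $C^1$ property suffices.
\item In Step~3 the mean-value remainder is a priori only $o(\|\mathbf{u}(s+h)-\mathbf{u}(s)\|)$, not $o(h)$. Bound it by $\varepsilon(h)\,h\,(\|w_h(s)\|+\|\mathbf{v}(s)\|)$ and absorb it into the Gronwall inequality, exactly as in Pazy's proof.
\end{itemize}
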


\section{Parabolic Problem}\label{dbcpwxfc}
In this section, we will prove Theorem~\ref{theorem1.2}. First, we show that the parabolic problem \eqref{neweq1.1} defines a gradient system. Next, we prove the Lojasiewicz Simon inequality corresponding to \eqref{neweq1.1}. Based on these two results,
we prove that the global solution to \eqref{neweq1.1} must converge to a stationary solution and obtain the corresponding convergence rate.
\subsection{Gradient system}
	In this subsection, we will show that problem \eqref{neweq1.1} defines a gradient system. The core point is to prove the uniform boundedness of $\left\|u(t) \right\|_{H^{5}}$, which leads to the precompactness of the orbit in $X(\kappa)$.
	
	Recall that $-A=-(B\Delta^{2}-T\Delta)$, it follows from \cite[Section~3.2]{ref2} that $-A$ generates an analytic semigroup $\left\{e^{-tA}: t\ge 0\right\}$ on $L^{2}(\Omega)$, and the domain is $D(A)=H_{D}^{4}(\Omega)$. There are also exist constant $M>0$ and $\alpha>0$ such that
\begin{equation*}
	\left\|e^{-tA} \right\|_{\mathcal{L}(L^{2}(\Omega))}\le Me^{-\alpha t},\quad t\ge 0.
\end{equation*}
For convenience, let
$$R(t)=\left\{e^{-tA}: t\ge 0\right\}.$$
Then it follows from the dedfinition of analytic semigroup (\cite[Section~2.2.5]{ref15}) that  $R(t)$ is a $C_{0}$-semigroup.

Under the assumptions of Theorem~\ref{theorem1.2}, the global solution $u$ satisfies
\begin{equation*}
	\left\|u \right\|_{H_{D}^{2}}\le \frac{(1-\kappa)^{2}}{C_{0}^{2}},\quad \forall t\ge 0.
\end{equation*}
Since $H^{2}(\Omega)\hookrightarrow L^{\infty}(\Omega)$, we have
\begin{equation*}
	\left\|u \right\|_{L^{\infty}}\le C_{0}\left\|u \right\|_{H_{D}^{2}}\le 1-\kappa,\quad \forall t\ge 0.
\end{equation*}
When $u_{0}\in X(\kappa)$, we can deduce from Theorem~\ref{newtheorem10.1} that $u$ has the following regularity.
\begin{equation*}
	u\in C([0,\tau_{m}),H^{4}(\Omega))\cap C^{1}([0,\tau_{m}),L^{2}(\Omega)).
\end{equation*}
Let $f(u)=-1/(1+u)^{2}$, then it follows from \cite[Section~3.2]{ref2} that $u$ satisfies the integral equation.
\begin{equation*}
	u(t)=R(t)u_{0}+\int_{0}^{t}R(t-\tau)f(u(\tau))d\tau.
\end{equation*}
Note that the gradient system consists of three elements that are function space, $C_{0}$-semigroup and Lyapunov function. Under the assumptions of Theorem~\ref{theorem1.2}, the function space is
\begin{equation*}
	X(\kappa):=\left\{u\in H_{D}^{4}(\Omega):\left\|u \right\|_{H_{D}^{2}}^{2}\le \frac{(1-\kappa)^{2}}{C_{0}^{2}}\textup{ in $\Omega$}\right\}.
\end{equation*}
By the definition of $X(\kappa)$, we easily get that $X(\kappa)$ is a complete metric space. In addition, under the assumptions of Theorem~~\ref{theorem1.2}, we can deduce from Theorem~\ref{newtheorem10.1} that $S(t)$ defined by $$S(t):X(\kappa)\to X(\kappa)$$ is $C_{0}$-semigroup.

In addition to $X(\kappa)$ and $S(t)$, we also need to prove the existence of Lyapunov function. We multiply the equation in \eqref{neweq1.1} by $u_{t}$, and integrate over $\Omega$ to get
\begin{equation}\label{eqnew111}
	\frac{d}{dt}\int_{\Omega}\left( \frac{1}{2}B|\Delta u|^2+\frac{1}{2}T|\nabla u|^2-\frac{\lambda}{1+u}\right)dx+\left\|u_{t} \right\|_{L^{2}(\Omega)}^{2}=0.
\end{equation}
The calculation details are as follows:
\begin{equation*}
	\begin{aligned}
		&\int_{\Omega} (\Delta ^{2}u)u_{t}dx\\
		&=\int_{\Omega} \Delta u \Delta u_{t}dx+\int_{\partial \Omega}u_{t}\frac{\partial (\Delta u)}{\partial \nu}dS-\int_{\partial \Omega}\Delta u \frac{\partial (u_{t})}{\partial \nu}dS\\
		&=\int_{\Omega} \Delta u \Delta u_{t}dx.\\
		&\int_{\Omega} -(\Delta u) u_{t}dx=\int_{\Omega}\nabla u \nabla u_{t}dx-\int_{\partial \Omega} u_{t} \frac{\partial u}{\partial \nu}dS\\
		&=\int_{\Omega}\nabla u \nabla u_{t}dx.
	\end{aligned}
\end{equation*}
Based on equation~\eqref{eqnew111}, we define the following energy functional:
\begin{equation}\label{eqnewnew555}
	E(u)=\int_{\Omega}\left(\frac{1}{2}B|\Delta u|^2+\frac{1}{2}T|\nabla u|^2-\frac{\lambda}{1+u}\right)dx.
\end{equation}
We will use the definition~\ref{predef2.7} to show that $E:X(\kappa)\to \mathbb{R}$ is a Lyapunov function.
\begin{lemma}\label{newnlfh}
	$E:X(\kappa)\to \mathbb{R}$ is a Lyapunov function with respect to $S(t)$.
\end{lemma}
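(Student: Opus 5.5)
We verify the two conditions of Definition~\ref{predef2.7} for $E$ on $X(\kappa)$, together with continuity of $E$ on $X(\kappa)$.

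\emph{Continuity.} For every $u\in X(\kappa)$ the embedding constant $C_{0}$ gives $\|u\|_{L^{\infty}}\le C_{0}\|u\|_{H_{D}^{2}}\le 1-\kappa$, so $1+u\ge\kappa>0$ in $\Omega$. The quadratic part of $E$ is simply $\frac12\|u\|_{H_{D}^{2}}^{2}$, which is continuous in $H^{2}$ and hence in the $H^{4}$ topology of $X(\kappa)$. For the singular term, take $u,v\in X(\kappa)$. Then
\begin{equation*}
\left|\int_{\Omega}\Big(\frac{\lambda}{1+u}-\frac{\lambda}{1+v}\Big)dx\right|\le \frac{\lambda}{\kappa^{2}}|\Omega|\,\|u-v\|_{L^{\infty}}\le \frac{\lambda C_{0}}{\kappa^{2}}|\Omega|\,\|u-v\|_{H_{D}^{2}},
\end{equation*}
so $E$ is Lipschitz on $X(\kappa)$ with respect to the $H^{2}_{D}$ norm.

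\emph{Lower bound (condition (ii)).} Since $1+u\ge\kappa$, we have $-\lambda/(1+u)\ge-\lambda/\kappa$, and the quadratic terms are nonnegative. Hence
\begin{equation*}
E(u)\ge -\frac{\lambda|\Omega|}{\kappa}\quad\text{for all } u\in X(\kappa).
\end{equation*}

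\emph{Monotonicity (condition (i)).} Fix $u_{0}\in X(\kappa)$ and let $u(t)=S(t)u_{0}$. By hypothesis $u(t)\in X(\kappa)$, so $1+u(t)\ge\kappa$ for all $t$. By Theorem~\ref{newtheorem10.1} and the $H^{4}$ initial data, $u\in C([0,\infty),H^{4})\cap C^{1}([0,\infty),L^{2})$. The approach is to justify \eqref{eqnew111} rigorously and then integrate it in time.
\begin{itemize}
\item Because $u\in H^{4}_{D}$ and $u_{t}\in L^{2}$, each term of the equation lies in $L^{2}$, so multiplying by $u_{t}$ and integrating over $\Omega$ is legitimate.
\item To identify $(B\Delta^{2}u-T\Delta u,u_{t})_{L^{2}}$ with $\frac{d}{dt}\frac12\|u\|_{H_{D}^{2}}^{2}$, use that $A$ is self-adjoint with $D(A)=H^{4}_{D}$. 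For $u\in C([0,\infty),D(A))\cap C^{1}([0,\infty),L^{2})$, the map $t\mapsto (Au,u)$ is differentiable with derivative $2(Au,u_{t})$. This can be checked via difference quotients, $(Au(t+h),u(t+h))-(Au(t),u(t))=(Au(t+h)+Au(t),u(t+h)-u(t))$, using symmetry of $A$.
\item The boundary integrals displayed in the paper vanish by the Dirichlet conditions, consistent with this identification.
\item For the nonlinear term, $s\mapsto -\lambda/(1+s)$ is smooth on $[-1+\kappa,\infty)$ and $u\in C^{1}([0,\infty),L^{2})$ with $u\ge -1+\kappa$. The chain rule therefore gives $\frac{d}{dt}\int_{\Omega}-\lambda/(1+u)\,dx=\int_{\Omega}\lambda u_{t}/(1+u)^{2}\,dx$.
\end{itemize}
Combining these gives \eqref{eqnew111}. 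Integrating from $s$ to $t$ yields
\begin{equation*}
E(u(t))+\int_{s}^{t}\|u_{t}\|_{L^{2}}^{2}\,d\tau=E(u(s)),\qquad 0\le s\le t,
\end{equation*}
so $E(S(t)u_{0})$ is non-increasing in $t$.

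The main obstacle is the regularity near $t=0$. Theorem~\ref{newtheorem10.1}(i) only states $H^{4}$ continuity on $(0,\tau_{m})$. I would avoid relying on more than that by working on $[s,t]$ with $s>0$ and letting $s\to0$. This limit is justified because $u\in C([0,\infty),H^{2})$ and, by the continuity estimate above, $E$ is continuous in $H^{2}$ on the set where $u\ge -1+\kappa$.
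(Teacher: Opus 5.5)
Your proposal is correct and follows the paper's proof. It obtains continuity from the pointwise bound $\lambda|(1+u)^{-1}-(1+v)^{-1}|\le\lambda\kappa^{-2}|u-v|$ combined with the embedding into $L^{\infty}$, monotonicity from the energy identity \eqref{eqnew111}, and the lower bound $E\ge-\lambda|\Omega|/\kappa$ from $1+u\ge\kappa$ on $X(\kappa)$. Your extra justification of \eqref{eqnew111} makes rigorous a step the paper simply asserts. That justification consists of the difference-quotient argument using the symmetry of $A$, the chain rule for the singular term, and working on $[s,t]$ with $s>0$, since Theorem~\ref{newtheorem10.1} only gives $H^{4}$-regularity for $t>0$.
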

\begin{proof}
	We will prove the lemma in three steps.
	\begin{itemize}
		\item[(\romannumeral 1)] Continuity: Fix $y_{0}\in X(\kappa)$. Suppose that there is sequence $y_{n}\in X(\kappa)$ such that $y_{n}\to y_{0}$ in $X(\kappa)$. We claim that $E(y_{n})\to E(y_{0})$.
		
		By the assumptions we get $y_{n}\to y_{0}$ in $H_{D}^{4}(\Omega)$. Since $H^{4}(\Omega)\hookrightarrow L^{\infty}(\Omega)$ and $H^{4}(\Omega)\hookrightarrow H^{2}(\Omega)$, we get  $y_{n}\to y_{0}$ in $L^{\infty}(\Omega)$, and $y_{n}\to y_{0}$ in $H_{D}^{2}(\Omega)$. Thus, when $n\to +\infty$,
		\begin{gather*}
			\left\|y_{n} \right\|_{H_{D}^{2}}^{2}-	\left\|y_{0} \right\|_{H_{D}^{2}}^{2} \to 0,\\
			\left\|y_{n}-y_{0} \right\|_{L^{\infty}}\to 0.
		\end{gather*}
		In addition, since $y_{0},y_{n}\in X(\kappa)$, we have
		\begin{gather*}
			\left\|y_{0} \right\|_{L^{\infty}}\le C_{0}\left\|y_{0} \right\|_{H_{D}^{2}}\le 1-\kappa,\\
			\left\|y_{n} \right\|_{L^{\infty}}\le C_{0}\left\|y_{n} \right\|_{H_{D}^{2}}\le 1-\kappa.
		\end{gather*}
		A direct calculation shows that
		\begin{align*}
			\Bigg|E(y_{n})-E(y_{0}) \Bigg|&=\left|\frac{1}{2}\left\|y_{n} \right\|_{H_{D}^{2}}^{2}-\frac{1}{2}\left\|y_{0} \right\|_{H_{D}^{2}}^{2}+\int_{\Omega}\left(\frac{\lambda}{1+y_{0}}-\frac{\lambda}{1+y_{n}}\right)dx \right|\\
			&\le \frac{1}{2}\Bigg|\left\|y_{n} \right\|_{H_{D}^{2}}^{2}-\left\|y_{0} \right\|_{H_{D}^{2}}^{2} \Bigg|+\lambda \Bigg|\int_{\Omega}\frac{y_{n}-y_{0}}{(1+y_{0})(1+y_{n})} dx\Bigg|\\
			&\le \frac{1}{2}\Bigg|\left\|y_{n} \right\|_{H_{D}^{2}}^{2}-\left\|y_{0} \right\|_{H_{D}^{2}}^{2} \Bigg|+\lambda \kappa^{-2}|\Omega|\big|y_{n}-y_{0} \big|_{L^{\infty}}.
		\end{align*}
		Therefore, when $y_{n}\to y_{0}$ , $E(y_{n})\to E(y_{0})$.
		\item[(\romannumeral 2)] Dissipation: By \eqref{eqnew111} and the definition of $E$, we immediately get
		\begin{equation*}
			\frac{d}{dt}E(u(t))=-\left\|u_{t} \right\|_{L^{2}}^{2}\le 0.
		\end{equation*}
		Therefore for any given $u_{0}\in X(\kappa)$, $E(S(t)u_{0})$ is monotone non-increasing in $t$.
		\item[(\romannumeral 3)] Lower bound: Since $u_{0}\in X(\kappa)$, we have
		\begin{equation*}
			\left\|u_{0} \right\|_{L^{\infty}}\le C_{0}\left\|u_{0} \right\|_{H_{D}^{2}}\le 1-\kappa.
		\end{equation*}
		Then
		\begin{equation*}
			E(u_{0})\ge -\frac{\lambda |\Omega|}{\kappa},
		\end{equation*}.
	\end{itemize}
In summary, $E$ is a Lyapunov function with respect to $S(t)$.
\end{proof}

Now we are ready to prove that \eqref{neweq1.1} defines a gradient system.
\begin{lemma}\label{eqnewlemma35.1}
	Under the assumption of Theorem~\ref{theorem1.2}, $(X(\kappa),S(t),E)$ is a gradient system.
\end{lemma}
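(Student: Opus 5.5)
We need to verify the two conditions of Definition~\ref{predef2.8}, since Lemma~\ref{newnlfh} already shows that $E$ is a Lyapunov function on the complete metric space $X(\kappa)$ for the $C_{0}$-semigroup $S(t)$.

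\textbf{Condition (ii).} This is the easy part. Suppose $E(S(t)u_{0})=E(u_{0})$ for some $t>0$. Integrating \eqref{eqnew111} over $[0,t]$ gives
\begin{equation*}
\int_{0}^{t}\left\|u_{s}(s)\right\|_{L^{2}}^{2}ds=E(u_{0})-E(S(t)u_{0})=0 .
\end{equation*}
Hence $u_{s}=0$ in $L^{2}$ for a.e.\ $s\in(0,t)$. Since $u\in C^{1}$ with values in $L^{2}$, it follows that $u_{s}\equiv 0$ on $[0,t]$. Therefore $S(t)u_{0}=u_{0}$, and in fact $u_{0}$ solves the stationary equation, so $u_{0}\in\mathcal S$.

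\textbf{Condition (i): relative compactness.} For each $u_{0}\in X(\kappa)$ we must show that $\bigcup_{t\ge t_{0}}S(t)u_{0}$ is relatively compact in $H_{D}^{4}(\Omega)$. The plan is to prove a uniform bound $\sup_{t\ge t_{0}}\|u(t)\|_{H^{5}}<\infty$ and then invoke the compact embedding $H^{5}(\Omega)\hookrightarrow H^{4}(\Omega)$. The bound comes from a bootstrap in the variation-of-constants formula
\begin{equation*}
u(t)=R(t)u_{0}+\int_{0}^{t}R(t-\tau)\,\lambda f(u(\tau))\,d\tau .
\end{equation*}
The ingredients are:
\begin{itemize}
\item the exponential decay $\|e^{-tA}\|\le Me^{-\alpha t}$;
\item the analytic smoothing estimates $\|A^{\beta}e^{-tA}\|_{\mathcal L(L^{2})}\le C_{\beta}t^{-\beta}e^{-\alpha t}$;
\item the identification of the fractional power domains $D(A^{\beta})$ with subspaces of $H^{4\beta}$ that carry the boundary conditions (valid for $4\beta$ away from the exceptional values).
\end{itemize}
By hypothesis $\|u(t)\|_{L^{\infty}}\le 1-\kappa$ for all $t$, so $1+u\ge\kappa$ and $f(u)=-(1+u)^{-2}$ is a smooth bounded function of $u$. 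The bootstrap then runs in three steps.
\begin{enumerate}
\item From $\|u(t)\|_{H^{2}}\le C$ uniformly, and $H^{2}$ being an algebra for $d\le2$, we get $\|f(u(t))\|_{H^{2}}\le C$ uniformly. Even the weaker bound $\|f(u)\|_{L^{2}}\le C$ is enough here: it gives $\sup_{t\ge t_{0}}\|A^{\beta}u(t)\|_{L^{2}}<\infty$ for every $\beta<1$, because $\int_{0}^{t}(t-\tau)^{-\beta}e^{-\alpha(t-\tau)}d\tau$ is bounded uniformly in $t$. The term $R(t)u_{0}$ is controlled for $t\ge t_{0}$ by the same smoothing estimate.
\item To exceed $H^{4}$, we need $f(u)$ in some $D(A^{\varepsilon})$, i.e.\ in $H^{4\varepsilon}$ with $4\varepsilon$ small enough that no boundary conditions are imposed (say $4\varepsilon<1/2$). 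Since $u$ is uniformly bounded in $H^{2}$ and $f$ is smooth on $[-1+\kappa,\infty)$, we have $f(u)\in H^{1}\subset H^{4\varepsilon}$ uniformly.
\item Writing $A^{1+\varepsilon}\int_{0}^{t}R(t-\tau)f\,d\tau=\int_{0}^{t}A^{1-\delta}R(t-\tau)A^{\varepsilon+\delta}f\,d\tau$ with small $\delta>0$ keeps the singularity integrable. This gives $u(t)$ bounded in $H^{4+4\varepsilon}$ uniformly for $t\ge t_{0}$, which already suffices for compactness in $H^{4}$. If one insists on $H^{5}$ as announced, one iterates: $u$ bounded in $H^{4}$ gives $f(u)$ bounded in $H^{4}$, but only the part of $D(A^{1/4})$-type regularity compatible with the boundary traces can be used, so the gain of one derivative comes from $f(u)\in H^{1}_{0}$-type spaces. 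Alternatively, one can use parabolic regularity for $v=u_{t}$, which solves a linear equation with a bounded coefficient.
\end{enumerate}

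\textbf{Main obstacle.} The hard part is exactly this regularity step beyond $H^{4}$. The boundary value $f(u)|_{\partial\Omega}=-\lambda\neq0$, so $f(u)\notin D(A^{\beta})$ for $4\beta>1/2$, and the smoothing argument must be carried out with care about fractional domains and the constraint $\varepsilon<1/8$. I would settle for the uniform $H^{4+4\varepsilon}$ bound, which yields relative compactness in $H^{4}$. Finally, one checks that the orbit stays in $X(\kappa)$, which holds by assumption, and that $X(\kappa)$ is closed in $H_{D}^{4}(\Omega)$. Together these give the relative compactness required in (i), completing the verification that $(X(\kappa),S(t),E)$ is a gradient system.
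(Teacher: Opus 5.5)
Your proof is correct. Your condition (ii) matches the paper's, but you reach condition (i) by a genuinely different route.

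\textbf{How the paper does (i).} It never extracts regularity from the mild formula. Instead it:
\begin{itemize}
\item approximates $u_0$ by $u_0^{(n)}\in D(A^2)$;
\item uses the explicit threshold on $\lambda$ to show that the approximating solutions are global and remain in $X(\kappa/4)$;
\item differentiates the equation in time and, via $t$-weighted energy estimates for $v=u_t^{(n)}$, bounds $\|u_t^{(n)}(t)\|_{H_D^2}$ uniformly for $t\ge\delta$;
\item applies the elliptic estimate to $Au^{(n)}=\lambda f(u^{(n)})-u_t^{(n)}$, whose right-hand side is in $H^1$, to get a uniform $H^5$ bound;
\item passes this bound to $u$ using $L^2$ convergence and weak lower semicontinuity.
\end{itemize}
You instead smooth directly in the variation-of-constants formula. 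This gives a uniform bound in $D(A^{1+\varepsilon})\hookrightarrow H^{4+4\varepsilon}$ with $\varepsilon<1/8$, which is enough for compactness in $H_D^4(\Omega)$.

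\textbf{What each route buys.} The paper gets a full extra derivative. Elliptic regularity for the clamped problem imposes no compatibility condition on the right-hand side, so the nonzero trace $\lambda f(u)|_{\partial\Omega}=-\lambda$ does no harm there. In your semigroup argument that same trace caps the gain below half a derivative. Your route is more economical. It works on the given solution, with no approximation, no time-differentiation, and no use of the smallness of $\lambda$; it needs only that the orbit stays in $X(\kappa)$. The price is reliance on the interpolation identification $D(A^\gamma)=H^{4\gamma}(\Omega)$ for $4\gamma<1/2$.

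\textbf{A simplification.} You can skip identifying $D(A^{1+\varepsilon})$ with a Sobolev space. Since $A$ has compact resolvent, $A^{-\varepsilon}$ is compact, so bounded subsets of $D(A^{1+\varepsilon})$ are already relatively compact in $D(A)=H_D^4(\Omega)$.

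\textbf{A correction to your aside.} $f(u)$ lies in no $H^1_0$-type space, precisely because of its nonzero boundary trace. So the iteration you sketch toward $H^5$ does not work. A full derivative requires the $u_t$-route, which is the paper's. Since you settle for $H^{4+4\varepsilon}$, this does not affect your proof.
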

\begin{proof}
	First, integrating \eqref{eqnew111} with respect to $t$ to get
		\begin{equation}\label{eqnew30.1}
		E(u(t))+\int_{0}^{t}\left\|u_t\right\|_{L^2(\Omega)}^2d\tau=E(u_{0}).
	\end{equation}
		This indicates that if there is $t_{0}>0$ such that $E(u(t_0))=E(S(t_0)u_0)=E(u_{0})$, then for all $0\le t\le t_0$, $u_{t}=0$. Therefore, $u_{0}$ must be an
		equilibrium ($S(t)u_{0}=u_{0}$). Then we will show that $\left\|u(t) \right\|_{H^{5}}$ is uniformly bounded to get the precompactness of the orbit in $X(\kappa)$.
		
		Due to the limited regularity of $u$, the density argument will be employed to complete the proof. By \cite[Theorem~2.5.2]{ref3}, $D(A^{2})$ is dense in $D(A)$. Then there is a sequence $u_{0}^{(n)}\in D(A^{2})$ such that $u_{0}^{(n)}\to u_{0}$ in $D(A)$. Since the embedding $H^{4}(\Omega)\hookrightarrow H^{2}(\Omega)$ is continuous, $u_{0}^{(n)}\to u_{0}$ in $H_{D}^{2}(\Omega)$. By the definition of strong convergence in Banach space, let $\varepsilon_{0}=\frac{(1-\kappa/2)}{C_{0}}-\frac{(1-\kappa)}{C_{0}}$, there is $n_{0}\in \mathbb{N}^{*}$ such that when $n\in \mathbb{N}$ and $n\ge n_{0}$,
	\begin{equation*}
		\big\|u_{0}^{(n)}-u_{0} \big\|_{H_{D}^{2}}\le \varepsilon_{0}.
	\end{equation*}
	By the triangle inequality, we get
	\begin{equation*}
		\big\|u_{0}^{(n)} \big\|_{H_{D}^{2}}\le \big\|u_{0} \big\|_{H_{D}^{2}}+\varepsilon_{0}\le \frac{1-\kappa/2}{C_{0}}.
	\end{equation*}
	Thus, we have
	\begin{equation*}
		\big\| u_{0}^{(n)}\big\|_{L^{\infty}}\le C_{0}\big\|u_{0}^{(n)} \big\|_{H_{D}^{2}}\le 1-\frac{\kappa}{2}.
	\end{equation*}
	Similarly, there is $n_{1}\in \mathbb{N}^{+}$ such that when $n\in \mathbb{N}$ and $n\ge n_{1}$,
	\begin{equation*}
		\big\|u_{0}^{(n)} \big\|_{H^{4}}\le \big\|u_{0} \big\|_{H^{4}}+\varepsilon_{0}.
	\end{equation*}
	Here we still use $n_{0}$ to denote $\max \left\{n_{0},n_{1}\right\}$. For $n\ge n_{0}$, we consider the problem where the initial value of \eqref{neweq1.1} becomes $u_{0}^{(n)}$, i.e.
	\begin{equation}\label{eqnew35.3}
		\left.\left\{\begin{array}{ll} u_{t}^{(n)}+B\Delta^2u^{(n)}-T\Delta u^{(n)}=-\frac{\lambda}{(1+u^{(n)})^2},\quad &t>0, \ x\in\Omega ,\\
			u^{(n)}=\partial_\nu u^{(n)}=0,& x\in\partial\Omega ,\\
			u^{(n)}(0,\cdot)=u_{0}^{(n)},& x\in \Omega,\end{array}\right.\right.
	\end{equation}
	Since $u_{0}^{(n)}\in X(\frac{\kappa}{2})$, then it follows from Theorem~\ref{newtheorem10.1} that there are $\tau_{m}^{(n)}>0$ and a unique maximal solution $u^{(n)}$ satisfying \eqref{eqnew35.3} with regularity
	\begin{equation*}
		u^{(n)}\in C([0,\tau_{m}),H^{4}(\Omega))\cap C^{1}([0,\tau_{m}),L^{2}(\Omega)).
	\end{equation*}
	Then we show that when $n\ge n_{0}$, $\tau_{m}^{(n)}=\infty$. Since $	\big\|u_{0}^{(n)} \big\|_{L^{\infty}}\le 1-\frac{\kappa}{2}$, by the regularity of $u^{(n)}$ we get
	\begin{equation*}
		T_{0}^{(n)}:=\sup\left\{\tau\in (0,\tau_{m}^{(n)}) \ : \ u^{(n)}(t)\ge -1+\frac{\kappa}{4}, \ t\in[0,\tau)\right\}>0,
	\end{equation*}
	In addition, for $t\in [0,T_{0}^{(n)})$, $1+u^{(n)}\ge \frac{\kappa}{4}$. Multiply the equation in \eqref{eqnew35.3} by $u_{t}^{(n)}$, integrating with respect to $x$ and $t$, we get for all $t \in(0,\tau_{m}^{(n)})$,
	\begin{align}\label{eqnew35.6}
		E(u^{(n)})-E(u_{0})=\int_{0}^{t}\big\|u_{t}^{(n)} \big\|_{L^{2}}^{2}.
	\end{align}
	By \eqref{eqnew35.6} and the definition of $E$, we have
	\begin{align}
		\big\|u^{(n)} \big\|_{H_{D}^{2}}^{2}&\le 	\big\|u_{0}^{(n)} \big\|_{H_{D}^{2}}^{2}+2\lambda\int_{\Omega}\left(\frac{1}{1+u^{(n)}}-\frac{1}{1+u_{0}^{(n)}} \right)dx \notag\\
		&\le \frac{\left(1-\kappa/2\right)^{2}}{C_{0}^{2}}+8\lambda \kappa^{-1}|\Omega|. \label{eqnew35.7}
	\end{align}
	A direct calculation yields that
	\begin{equation}\label{eqnew35.8}
		\frac{\left(1-\kappa/2\right)^{2}}{C_{0}^{2}}+8\lambda \kappa^{-1}|\Omega| \le \frac{\left(1-\kappa/4\right)^{2}}{C_{0}^{2}} \iff \lambda \le \frac{\kappa^{2}(8-3\kappa)}{128C_{0}^{2}|\Omega|},
	\end{equation}
	which is consistent with the assumption in~\ref{theorem1.2}. Thus $T_{0}^{(n)}\ge \tau_{m}^{(n)}$. On the other hands, by the definition of $T_{0}^{(n)}$, $T_{0}^{(n)}\le \tau_{m}^{(n)}$. Thus $T_{0}^{(n)}=\tau_{m}^{(n)}$. Combining \eqref{eqnew35.7} with \eqref{eqnew35.8} yields
	\begin{equation*}
		\big\|u^{(n)} \big\|_{L^{\infty}}\le C_{0} \big\|u^{(n)} \big\|_{H_{D}^{2}}\le 1-\frac{\kappa}{4}.
	\end{equation*}
	Therefore,
	$$\liminf_{t\to \tau_{m}^{(n)}}\left(\min_{\Omega}u^{(n)}(t)\right)\ge -1+\frac{\kappa}{4}>-1.$$
	It follows from Theorem~\ref{newtheorem10.1}(\romannumeral2) that $\tau_{m}^{(n)}=\infty$. In addition, the solution to \eqref{eqnew35.3} globally exists and strictly greater than $-1$. Differentiate the equation in \eqref{eqnew35.3} with respect to $t$, we get
	\begin{equation*}
		u^{(n)}_{tt}+Au^{(n)}_{t}=f'(u^{(n)})u^{(n)}_{t}.
	\end{equation*}
	where
	\begin{gather*}
		A=B\Delta^{2}-T\Delta,\quad f'(u^{(n)})=\frac{2\lambda}{(1+u^{(n)})^{3}}.
	\end{gather*}
	Then we show $u_{t}^{(n)}$ satisfies Dirichlet boundary condition. For convenience, we prove that $$u=\partial_{\nu}u \Longrightarrow u_{t}=\partial_{\nu}(u_{t})$$  First, by $u=0$ on $\partial \Omega$, we get $u_{t}=0$. On the other hands, it follows from the definition of directional derivative and partial derivative that
	\begin{align*}
		\frac{\partial (u_t)}{\partial \nu}&=\lim_{\rho  \to 0^{-}}\frac{u_{t}(x+\rho \nu,t)-u_{t}(x,t)}{\rho}\\
		&=\lim_{\rho  \to 0^{-}} \lim_{\Delta t \to 0}\frac{u(x+\rho \nu,t+\Delta t)-u(x+\rho \nu,t)-u(x,t+\Delta t)+u(x,t)}{\rho \Delta t}.
	\end{align*}
	Define the binary function
	\begin{equation*}
		h(\rho,\Delta t)=\frac{u(x+\rho \nu,t+\Delta t)-u(x+\rho  \nu ,t)-u(x,t+\Delta t)+u(x,t)}{\rho \Delta t}.
	\end{equation*}
	Then we have
	\begin{gather*}
		\lim_{\rho  \to 0^{-}} h(\rho ,\Delta t)=\lim_{\Delta t \to 0 }\left[\frac{1}{\Delta t}\cdot\left( \frac{\partial u}{\partial \nu}(t+\Delta t)-\frac{\partial u}{\partial \nu}(t)\right) \right]=(\frac{\partial u}{\partial  \nu })_t=0, \  \text{关于}\  \Delta t \  \text{一致}.
	\end{gather*}
	On one hand,
	\begin{equation*}
		\lim_{\Delta t \to 0}h(\rho,\Delta t)=\frac{u_{t}(x+\rho \nu,t)-u_{t}(x,t)}{\rho}, \quad \rho\neq 0.
	\end{equation*}
	It follows from \cite[Moore-Osgood~Theorem]{ref16} that
	\begin{equation*}
		\frac{\partial (u_t)}{\partial \nu}=\lim_{\rho  \to 0^{-}} \lim_{\Delta t \to 0}h(\rho ,\Delta t)=\lim_{\Delta t  \to 0} \lim_{\rho \to 0^{-}}h(\rho ,\Delta t)=0,\quad x\in \partial \Omega.
	\end{equation*}
	Therefore $u_{t}^{(n)}$ satisfies Dirichlet boundary condition. Combining the equation in \eqref{eqnew35.3}, we immediately get
	\begin{equation*}
		u_{t}^{(n)}\big|_{t=0}=f(u_{0}^{(n)})-Au_{0}^{(n)}.
	\end{equation*}
	Let $v=u_{t}^{(n)}$, for $n\ge n_{0}$, $v$ satisfies
	\begin{equation}\label{eqnew35.4}
		\left.\left\{\begin{array}{ll} v_{t}+Av=f'(u^{(n)})v,\quad &t>0, \ x\in\Omega ,\\
			v=\partial_\nu v=0,& x\in\partial\Omega ,\\
			v(0,\cdot)=f(u_{0}^{(n)})-Au_{0}^{(n)},& x\in \Omega,\end{array}\right.\right.
	\end{equation}
	Recall that $-A$ is the infinitesimal generator of $R(t)$, then by \cite[Remark~2.2.1]{ref3}, we know that $-A$ is a maximal accretive operator(see \cite[Definition~2.2.1]{ref3}). Note that for any $v_{1},v_{2}\in L^{2}(\Omega)$,
	\begin{equation*}
		\big\|f'(u^{(n)})v_{1}-f'(u^{(n)})v_{2} \big\|_{L^{2}}\le \big|f'(u^{(n)}) \big|_{L^{\infty}}\big\|v_{1}-v_{2} \big\|_{L^{2}}\le 64\lambda \kappa^{-3}\big\|v_{1}-v_{2} \big\|_{L^{2}}.
	\end{equation*}
	Thus $f'(u^{(n)})v$ satisfies global Lipschitz condition. Since $u_{0}^{(n)}\in D(A^{2})$ and $u^{(n)}\ge -1+\kappa/4$, we get $v\big|_{t=0}\in D(A)$. In addition, $L^{2}(\Omega)$ is reflexive Banach space. It then follows from \cite[Corollary~2.5.2]{ref3} that problem \eqref{eqnew35.4} admits a classical solution $v$ with regularity
	\begin{equation*}
		v\in C([0,+\infty),H^{4}(\Omega))\cap C^{1}([0,+\infty),L^{2}(\Omega)).
	\end{equation*}
	Since $v=u_{t}^{(n)}$, we have
	\begin{equation*}
		u^{(n)}\in C^{1}([0,+\infty),H^{4}(\Omega))\cap C^{2}([0,+\infty),L^{2}(\Omega)).
	\end{equation*}
	For $n\ge n_{0}$, consider the following problem:
	\begin{equation}\label{eq3.8}
		\left.\left\{\begin{array}{ll}B\Delta^2 u^{(n)}-T\Delta u^{(n)}=-\frac{\lambda}{(1+u^{(n)})^2}-u_t^{(n)},\quad &x\in\Omega ,\\u^{(n)}=\partial_\nu u^{(n)}=0,& x\in\partial\Omega ,\end{array}\right.\right.
	\end{equation}
	By the priori estimate of ellptic equation (see \cite[Section~1.3.4]{ref3}), there exist two positive constants $C_{1},C_{2}$ independent of $u^{(n)}$, such that
	\begin{equation}\label{eqnewnew1000}
		\left\| u^{(n)}(t) \right\|_{H^5}\le C_{1}\left(\left\| -\frac{\lambda }{(1+u^{(n)})^2}   \right\| _{H^{1}}+\left\| u_t^{(n)}  \right\|_{H^{1}} \right)+C_{2}\left\|u^{(n)} \right\|_{L^{2}}.
	\end{equation}
	
	Now we estimate the right terms in \eqref{eqnewnew1000}. Recall that $\left\|u^{(n)} \right\|_{H_{D}^{2}}\le \frac{1-\kappa/4}{C_{0}}$. We use the definition of the norm in Sobolev space to split $\left\|-\lambda(1+u^{(n)})^{-2} \right\|_{H^{1}}$ into two items:
	\begin{gather*}
		\left\|\frac{-\lambda}{(1+u^{(n)})^{2}} \right\|_{L^{2}}\le 16\lambda \kappa^{-2}|\Omega|^{\frac{1}{2}},\\
		\left\| \frac{\lambda}{(1+u^{(n)})^{3}}\nabla u \right\|_{L^{2}}\le 64\lambda \kappa^{-3} \max\left\{1,T\right\}\left\|u^{(n)} \right\|_{H_{D}^{2}}\le \max\left\{1,T\right\}\frac{64\lambda(1-\kappa)}{\kappa^{3}C_{0}}.
	\end{gather*}
	Thus by the definition of $\left\|\cdot \right\|_{H^{1}}$ we get
	\begin{equation}\label{eqnew35.1}
		\left\|\frac{\lambda}{(1+u^{(n)})^{2}} \right\|_{H^{1}}\le C_{3}.
	\end{equation}
	where $C_{3}=C_{3}(\lambda,\kappa,\Omega,C_{0},T)$. since the embedding $H^{2}(\Omega)\hookrightarrow L^{2}(\Omega)$ is continuous, there is $C_{4}>0$ such that
	\begin{equation}\label{eqnew35.2}
		\left\|u^{(n)} \right\|_{L^{2}}\le C_{4}\left\|u^{(n)} \right\|_{H_{D}^{2}}\le \frac{C_{4}(1-\kappa/4)}{C_{0}}.
	\end{equation}
	Next we estimate $\big\|u_{t}^{(n)} \big\|_{H^{1}}$. Differentiate the equation in \eqref{eqnew35.3} with respect to $t$ , multiply the resultant by $u_{t}$, and integrat over $\Omega$, we get
	\begin{align}\label{eqnew35.11}
		\frac{1}{2}\frac{d}{dt}\big\|u_{t}^{(n)} \big\|_{L^{2}}^{2}+\big\|u_{t}^{(n)} \big\|_{H_{D}^{2}}^{2}=\int_{\Omega}\frac{2\lambda}{(1+u^{(n)})^{3}}(u_{t}^{(n)})^{2}dx\le 128\lambda \kappa^{-3}\big\|u_{t}^{(n)} \big\|_{L^{2}}^{2}.
	\end{align}
	The calculation details are as follows:
	\begin{equation*}
		\begin{aligned}
			&\int_{\Omega} (\Delta ^{2}u_{t}^{(n)})u_{t}^{(n)}dx\\
			&=\int_{\Omega} \Delta u_{t}^{(n)} \Delta u_{t}^{(n)}dx+\int_{\partial \Omega}u_{t}^{(n)}\frac{\partial (\Delta u_{t}^{(n)})}{\partial \nu}dS-\int_{\partial \Omega}\Delta u_{t}^{(n)} \frac{\partial (u_{t}^{(n)})}{\partial \nu}dS\\
			&=\int_{\Omega} |\Delta u_{t}^{(n)}|^{2}dx.\\
			&\int_{\Omega} -(\Delta u_{t}^{(n)}) u_{t}^{(n)}dx=\int_{\Omega}\nabla u_{t}^{(n)} \nabla u_{t}^{(n)}dx-\int_{\partial \Omega} u_{t}^{(n)} \frac{\partial (u_{t}^{(n)})}{\partial \nu}dS\\
			&=\int_{\Omega}|\nabla u_{t}^{(n)}|^{2}dx.
		\end{aligned}
	\end{equation*}
	By \eqref{eqnew35.6}, we get that for any $t>0$,
	\begin{align*}
		\int_{0}^{t}\big\|u_{t}^{(n)} \big\|_{L^{2}}^{2}d\tau&=E(u_{0}^{(n)})-E(u^{(n)})\\
		& \le 2E(u_{0}^{(n)})\\
		&\le \big\|u_{0}^{(n)} \big\|_{H_{D}^{2}}^{2}-\int_{\Omega}\frac{2\lambda}{1+u_{0}^{(n)}}dx\\
		&\le \frac{(1-\kappa/2)^{2}}{C_{0}^{2}}:=C_{5}.
	\end{align*}
	Mutiplying \eqref{eqnew35.11} by $t$, then adding $\big\|u_{t}^{(n)} \big\|_{L^{2}}^{2}$ to both sides yields
	\begin{equation*}
		\frac{d}{dt}\left(t\big\|u_{t}^{(n)} \big\|_{L^{2}}^{2}\right)+2t\big\|u_{t}^{(n)} \big\|_{H_{D}^{2}}^{2}\le \big\|u_{t}^{(n)} \big\|_{L^{2}}^{2}+256\lambda \kappa^{-3}t\big\|u_{t}^{(n)} \big\|_{L^{2}}^{2}.
	\end{equation*}
	Integrating the above inequality with respect to $t$ , we get for all $t>0$,
	\begin{align*}
		t\big\|u_{t}^{(n)} \big\|_{L^{2}}^{2}+2\int_{0}^{t}\tau\big\|u_{t}^{(n)} \big\|_{H_{D}^{2}}^{2}d\tau&\le \int_{0}^{t}\big\|u_{t}^{(n)} \big\|_{L^{2}}^{2}d\tau+256\lambda \kappa^{-3}\int_{0}^{t}\tau\big\|u_{t}^{(n)} \big\|_{L^{2}}^{2}d\tau\\
		&\le C_{5}+256\lambda \kappa^{-3}t\int_{0}^{t}\big\|u_{t}^{(n)} \big\|_{L^{2}}^{2}d\tau\\
		&\le C_{5}+256\lambda\kappa^{-3}C_{5}t.
	\end{align*}
	Thus, for $t\ge \delta>0$,
	\begin{equation}\label{eqnew35.12}
		\big\|u_{t}^{(n)} \big\|_{L^{2}}^{2}\le \frac{C_{5}}{t}+256\lambda \kappa^{-3}C_{5}\le C_{\delta}:=\frac{C_{5}}{\delta}+256\lambda \kappa^{-3}C_{5}.
	\end{equation}
	Note that
	\begin{align*}
		\big\|u_{t}^{(n)}(0) \big\|_{L^{2}}&=\left\|-Au_{0}^{(n)}-\frac{\lambda}{(1+u_{0}^{(n)})^{2}} \right\|_{L^{2}}\\
		&\le \big\|B\Delta^{2} u_{0}^{(n)}-T\Delta u_{0}^{(n)} \big\|_{L^{2}}+\left\|-\frac{\lambda}{(1+u_{0}^{(n)})^{2}} \right\|_{L^{2}}\\
		&\le C_{6}\big\|u_{0}^{(n)} \big\|_{H^{4}}+4\lambda \kappa^{-2}|\Omega|^{\frac{1}{2}}\\
		&\le \big\|u_{0} \big\|_{H^{4}}+\varepsilon_{0}+4\lambda \kappa^{-2}|\Omega|^{\frac{1}{2}}:=C_{7},
	\end{align*}
	where $C_{7}=C_{7}(B,T,\lambda,\Omega,\kappa,u_{0})$. Integrate \eqref{eqnew35.11} with respect to $t$, we get for all $t>0$,
	\begin{align*}
		\frac{1}{2}\big\|u_{t}^{(n)} \big\|_{L^{2}}^{2}+\int_{0}^{t}\big\|u_{t}^{(n)} \big\|_{H_{D}^{2}}^{2}d\tau&\le \frac{1}{2}\big\|u_{t}^{(n)}(0) \big\|_{L^{2}}^{2}+128\lambda \kappa^{-3}\int_{0}^{t}\big\|u_{t}^{(n)} \big\|_{L^{2}}^{2}d\tau\\
		&\le \frac{C_{7}^{2}}{2}+128\lambda \kappa^{-3}C_{5}.
	\end{align*}
	This means
	\begin{equation}\label{eqnew35.20}
		\int_{0}^{t}\big\|u_{t}^{(n)} \big\|_{H_{D}^{2}}^{2}d\tau\le \frac{C_{7}^{2}}{2}+128\lambda \kappa^{-3}C_{5}:=C_{8},\quad \forall t>0.
	\end{equation}
	Differentiating the equation in \eqref{eqnew35.3} $t$ with respect to $t$, then multiplying the resultant by $u_{tt}^{(n)}$, and integrating over $\Omega$, we get
	\begin{align*}
		\big\|u_{tt}^{(n)} \big\|_{L^{2}}^{2}+\frac{1}{2}\frac{d}{dt}\big\|u_{t}^{(n)} \big\|_{H_{D}^{2}}^{2}&=\int_{\Omega}\frac{2\lambda}{(1+u^{(n)})^{3}}u_{t}^{(n)}u_{tt}^{(n)}dx\\
		&\le 128\lambda \kappa^{-3}\int_{\Omega}|u_{t}^{(n)}| |u_{tt}^{(n)}|dx\\
		&\le 128\lambda \kappa^{-3}\int_{\Omega}\left(\varepsilon^{2}|u_{t}^{(n)}|^{2}+\frac{1}{4\varepsilon^{2}}|u_{tt}^{(n)}|^{2}\right)dx,
	\end{align*}
	where Young's inequality is employed. Let $\varepsilon=8\sqrt{\lambda \kappa^{-3}}$, we get
	\begin{equation}\label{eqnew35.13}
		\big\|u_{tt}^{(n)} \big\|_{L^{2}}^{2}+\frac{d}{dt}\big\|u_{t}^{(n)} \big\|_{H_{D}^{2}}^{2}\le 128\lambda^{2}\kappa^{-6}	\big\|u_{t}^{(n)} \big\|_{L^{2}}^{2}.
	\end{equation}
	Similarly, mutiply \eqref{eqnew35.13} by $t$, add $\big\|u_{t}^{(n)} \big\|_{H_{D}^{2}}^{2}$ to both sides, and integrate with respect to $t$, we get for all $t>0$,
	\begin{align*}
		\int_{0}^{t}\tau	\big\|u_{tt}^{(n)} \big\|_{L^{2}}^{2}+t\big\|u_{t}^{(n)} \big\|_{H_{D}^{2}}^{2}&\le \int_{0}^{t}\big\|u_{t}^{(n)} \big\|_{H_{D}^{2}}^{2}d\tau+128\lambda^{2}\kappa^{-6}\int_{0}^{t}\tau\big\|u_{t}^{(n)} \big\|_{L^{2}}^{2}d\tau\\
		&\le C_{8}+128\lambda^{2}\kappa^{-6}t\int_{0}^{t}\big\|u_{t}^{(n)} \big\|_{L^{2}}^{2}d\tau\\
		&\le C_{8}+128\lambda^{2}\kappa^{-6}C_{5}t.
	\end{align*}
	Then for all $t\ge \delta>0$,
	\begin{equation*}
		\big\|u_{t}^{(n)} \big\|_{H_{D}^{2}}^{2}\le \frac{C_{8}}{t}+128\lambda^{2}\kappa^{-6}C_{5}\le \tilde{C}_{\delta}:= \frac{C_{8}}{\delta}+128\lambda^{2}\kappa^{-6}C_{5}.
	\end{equation*}
	Since $H^{2}(\Omega)\hookrightarrow H^{1}(\Omega)$, there is a constant $C_{9}>0$ such that
	\begin{equation}\label{eqnew35.24}
		\big\|u_{t}^{(n)} \big\|_{H^{1}}^{2}\le C_{9}\big\|u_{t}^{(n)} \big\|_{H_{D}^{2}}^{2}.
	\end{equation}
	Therefore, combining \eqref{eqnewnew1000}-\eqref{eqnew35.2} with \eqref{eqnew35.24}, for fixed $\delta>0$,
	\begin{equation}\label{eqnew35.25}
		\big\|u^{(n)} \big\|_{H^{5}}\le C_{10},\quad  \forall t\ge \delta.
	\end{equation}
	Let $w=u^{(n)}-u$, then $w$ satisfies
	\begin{equation}\label{eqnew100.1}
		\left.\left\{\begin{array}{ll} w_{t}+B\Delta^{2}w-T\Delta w=Fw,\quad &t>0, \ x\in\Omega ,\\
			w=\partial_\nu w=0,&  x\in\partial \Omega ,\\
			w(0,\cdot)=u_{0}^{(n)}-u_{0},& x\in \Omega,\end{array}\right.\right.
	\end{equation}
	where
	\begin{equation*}
		F=\frac{\lambda(2+u^{(n)}+u)}{(1+u^{(n)})^{2}(1+u)^{2}}.
	\end{equation*}
	Multiplying \eqref{eqnew100.1} by $w$, then integrating over  $\Omega$ yields
	\begin{align*}
		\frac{1}{2}\frac{d}{dt}\left\| w\right\|_{L^{2}}+\left\| w\right\|_{H_{D}^{2}}^{2}&=\int_{\Omega}Fw^{2}dx\\
		&\le|F|\left\|w \right\|_{L^{2}}^{2}\\
		&\le \frac{\lambda(4-\frac{5}{4}\kappa)}{\kappa^{2}(\frac{\kappa}{4})^{2}}\left\|w \right\|_{L^{2}}^{2}\\
		&= \lambda\kappa^{-4} (64-20\kappa)\left\|w \right\|_{L^{2}}^{2}\\
		&\le C_{10}\left\|w \right\|_{L^{2}}^{2}.
	\end{align*}
	By Gronwall inequality (参见 \cite[Section~1.3.6]{ref3}), for all $t\in [0,\tilde{T}_{0}]$,
	\begin{equation*}
		\big\|w(t) \big\|_{L^{2}}=\big\|u^{(n)}(t)-u(t) \big\|_{L^{2}}^{2}\le e^{2C_{10}\tilde{T}_{0}}\big\|u_{0}^{(n)}-u_{0} \big\|_{L^{2}}^{2}\to 0.
	\end{equation*}
	Thus, $u_{n}\to u$ in $L^{2}(\Omega)$. Since $\big\| u^{(n)}\big\|_{H^{5}}$ is uniformly boundness, then by Eberlein-Sumulian Theorem, there is a sequence $u^{(nk)}$ such that $u^{(nk)}\rightharpoonup \tilde{u}$ ( ``$\rightharpoonup$" denotes the weak convergence in Banach space). Since $H^{5}(\Omega)\hookrightarrow L^{2}(\Omega)\cong (L^{2}(\Omega))'\hookrightarrow (H^{5}(\Omega))'$, $u^{(nk)}\rightharpoonup \tilde{u}$ in $L^{2}(\Omega)$. Note that $u_{n}\to u$ in $L^{2}(\Omega)$, we obtain that $\tilde{u}=u$. It then follows from weak lower semi continuity of norm(\cite[Remark~1.5.4]{refnew99}) that
	\begin{equation*}
		\big\|u \big\|_{H^{5}}=	\big\|\tilde{u} \big\|_{H^{5}}\le \liminf_{n \to \infty}	\big\|u^{(nk)} \big\|_{H^{5}}\le C_{10}.
	\end{equation*}
	Therefore we obtain the uniform boundness of $\left\| \right\|_{H^{5}}$. Since $H^{5}(\Omega)\hookrightarrow H^{4}(\Omega)$, for fixed $t_{1}>0$, we get
	\begin{equation*}
		\bigcup_{t\ge t_{1}}S(t)u_{0}
	\end{equation*}
	is relatively compact in $X(\kappa)$. Therefore we can deduce from Definition~\ref{predef2.8} that $(X(\kappa),S(t),E)$ is a gradient system.
\end{proof}
\subsection{Lojasiewicz-Simon inequality}\label{paowuLSbudengshi}

We define the $\omega$-limit set by
\begin{equation*}
	\omega(u_{0})=\left\{\psi :\exists\,t_{n}\to +\infty \ \text{such that}\,\lim_{n\to \infty}\left\|u(t_{n},\cdot)-\psi \right\|_{H_{D}^{4}}=0\right\}.
\end{equation*}
By lemma~\ref{eqnewlemma35.1} and Theorem~\ref{prethm2.4}, $\omega(u_{0})$ is consists of equilibrium, i.e. there are $\psi\in \omega(u_{0})\subset\mathcal{S}$ and a sequence $t_{n}\to +\infty$, such that
\begin{equation*}
	u(t_{n},\cdot)\to \psi\,\,\text{in $X(\kappa)$}.
\end{equation*}
Since $H^{4}(\Omega)\hookrightarrow L^{\infty}(\Omega)$, we get $u(t_{n},\cdot)\to \psi$ in $L^{\infty}(\Omega)$. Let $\varepsilon_{2}=\frac{\kappa}{2}$, then there  is $n_{2}\in \mathbb{N}^{*}$ such that when $n\ge N$,
\begin{equation*}
	\left\|u(t_{n})-\psi \right\|_{L^{\infty}}\le \frac{\kappa}{2}.
\end{equation*}
By the triangle inequality,
\begin{equation*}
	\left\|\psi \right\|_{L^{\infty}}\le 	\left\| u(t_{n}) \right\|_{L^{\infty}}+\frac{\kappa}{2}\le 1-\frac{\kappa}{2}.
\end{equation*}
Fix $\psi\in \omega(u_{0})\subset\mathcal{S}$, we consider the linearized problem \eqref{neweq1.1} at $\psi$:
\begin{equation}\label{linearhua}
	\left.\left\{\begin{array}{ll} Lw\equiv B\Delta^2w-T\Delta w-\frac{2\lambda w}{(1+\psi)^3}=0,\quad &x\in\Omega ,\\
		w=\partial_\nu w=0,& x\in\partial\Omega,\end{array}\right.\right.
\end{equation}
For any $h\in L^{2}(\Omega)$, consider the equation
\begin{equation}\label{linearhua1}
	\left.\left\{\begin{array}{ll} Lw+\mu w=h,\quad &x\in\Omega ,\\
		w=\partial_\nu w=0,& x\in\partial\Omega,\end{array}\right.\right.
\end{equation}
and the homogeneous equation
\begin{equation}\label{linearhua2}
	\left.\left\{\begin{array}{ll} Lw+\mu w=0,\quad &x\in\Omega ,\\
		w=\partial_\nu w=0,& x\in\partial\Omega,\end{array}\right.\right.
\end{equation}
Multiplying the equation in \eqref{linearhua2} by $w$, then integrating over $\Omega$ yields
\begin{equation*}
	\int_{\Omega}\left(B|\Delta w|^{2}+T|\nabla w|^{2}+\left(-\frac{2\lambda}{(1+\psi)^{3}}+\mu\right)|w|^{2}\right)dx=0.
\end{equation*}
Note that $\left\|\psi \right\|_{L^{\infty}}\le 1-\frac{\kappa}{2}$, we have
\begin{equation*}
	-\frac{2\lambda}{(1+\psi)^{3}}\ge -16\lambda \kappa^{-3}.
\end{equation*}
Thus for any constant $\mu>16\lambda\kappa^{-3}$, problem \eqref{linearhua2} has only the trivial solution $w\equiv 0$. Then by classical $L^{p}$ theory in elliptic equation(\cite[Corollary~2.21]{refnew101}), problem \eqref{linearhua1} admits a unique solution $w\in H^{4}(\Omega)\cap H_{0}^{2}(\Omega)=H_{D}^{4}(\Omega)$. Based on this, we will prove the following Fredholm alternative result:
\begin{lemma}\label{fredholm 2zeyi}
	Precisely one of the following two situations holds:
	\begin{itemize}
		\item[(\romannumeral1)] for any $f\in L^{2}(\Omega)$, there is unique $w$ satisfying
		\begin{equation}\label{linearhua3}
			\left.\left\{\begin{array}{ll} Lw=f,\quad &x\in\Omega ,\\
				w=\partial_\nu w=0,& x\in\partial\Omega,\end{array}\right.\right.
		\end{equation}
		\item[(\romannumeral2)]there is a weak solution $w\not \equiv 0$ satisfying
		\begin{equation}\label{linearhua4}
			\left.\left\{\begin{array}{ll} Lw=0,\quad &x\in\Omega ,\\
				w=\partial_\nu w=0,& x\in\partial\Omega,\end{array}\right.\right.
		\end{equation}
	\end{itemize}
	In addition, if (\romannumeral1) holds, then $Ker(L)=\left\{0\right\}$; If (\romannumeral2) holds, then $dim(Ker(L))$ is finite. Furthermore, the necessary and sufficient condition for the solvability of \eqref{linearhua3} is $f\in Ker(L)^{\perp}$.
\end{lemma}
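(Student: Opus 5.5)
The plan is to reduce the statement to the Riesz--Schauder (Fredholm) theory for compact operators on $L^{2}(\Omega)$, using the shifted operator $L+\mu$ constructed just before the lemma. Fix $\mu>16\lambda\kappa^{-3}$. By the discussion preceding the lemma, for every $h\in L^{2}(\Omega)$ problem \eqref{linearhua1} has a unique solution $w\in H_{D}^{4}(\Omega)$. Denote it by $K_{\mu}h:=(L+\mu)^{-1}h$. I will first verify that $K_{\mu}:L^{2}(\Omega)\to L^{2}(\Omega)$ is bounded, self-adjoint and compact.

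\emph{Boundedness.} Test \eqref{linearhua1} with $w$ and use $-2\lambda(1+\psi)^{-3}+\mu\ge \mu-16\lambda\kappa^{-3}=:c_{\mu}>0$. This gives
\[
\|w\|_{H_{D}^{2}}^{2}+c_{\mu}\|w\|_{L^{2}}^{2}\le \|h\|_{L^{2}}\|w\|_{L^{2}},
\]
so $\|K_{\mu}h\|_{H_{D}^{2}}\le C\|h\|_{L^{2}}$.

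\emph{Compactness.} This follows from the compact embedding $H_{D}^{2}(\Omega)\hookrightarrow L^{2}(\Omega)$ (Rellich).

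\emph{Symmetry.} The bilinear form
\[
a(w,v)=\int_{\Omega}\bigl(B\Delta w\Delta v+T\nabla w\cdot\nabla v-2\lambda(1+\psi)^{-3}wv\bigr)dx
\]
is symmetric, and the integrations by parts already carried out in the paper show that $(Lw,v)_{L^{2}}=a(w,v)$ for $w,v\in H_{D}^{4}(\Omega)$. Hence $(K_{\mu}h,g)_{L^{2}}=(h,K_{\mu}g)_{L^{2}}$.

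Next I rewrite \eqref{linearhua3}. For $w\in H_{D}^{4}(\Omega)$, $Lw=f$ is equivalent to $(L+\mu)w=f+\mu w$, that is,
\[
w-\mu K_{\mu}w=K_{\mu}f .
\]
Since $K_{\mu}f\in L^{2}(\Omega)$, the Fredholm alternative for the compact operator $\mu K_{\mu}$ applies. Either $I-\mu K_{\mu}$ is injective, and then it is surjective, which gives (i). Or its kernel is nontrivial and finite dimensional, which gives (ii).

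I then check that the kernels coincide. If $w-\mu K_{\mu}w=0$, then $w=\mu K_{\mu}w\in H_{D}^{4}(\Omega)$ and $Lw=0$. Conversely, $Lw=0$ implies $w=\mu K_{\mu}w$; for a weak solution $w\in H_{D}^{2}(\Omega)$ the same identity holds weakly, and uniqueness for \eqref{linearhua1} upgrades $w$ to $H_{D}^{4}(\Omega)$. Therefore $\operatorname{Ker}(L)=\operatorname{Ker}(I-\mu K_{\mu})$, and it is finite dimensional. In case (i) this kernel is $\{0\}$. The two alternatives are mutually exclusive because uniqueness in (i) forbids a nonzero kernel.

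For the solvability condition, Fredholm theory states that $w-\mu K_{\mu}w=g$ is solvable iff $g\perp \operatorname{Ker}(I-\mu K_{\mu}^{*})$. Since $K_{\mu}$ is self-adjoint, this kernel equals $\operatorname{Ker}(L)$. So solvability holds iff
\[
(K_{\mu}f,v)_{L^{2}}=0\quad\text{for all } v\in\operatorname{Ker}(L).
\]
By self-adjointness, $(K_{\mu}f,v)=(f,K_{\mu}v)=\mu^{-1}(f,v)$, because $\mu K_{\mu}v=v$. The condition therefore reduces to $f\in\operatorname{Ker}(L)^{\perp}$.

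The main obstacle is justifying the self-adjointness of $K_{\mu}$: the boundary terms in the Green identity must vanish for $H_{D}^{4}$ functions, and this is exactly the computation the paper already performs for $\Delta^{2}$ and $-\Delta$. A secondary point is the weak-solution-to-$H_{D}^{4}$ regularity, which I will handle through the uniqueness of \eqref{linearhua1} as above rather than invoking separate elliptic regularity.
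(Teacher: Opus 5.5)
Your proposal is correct and follows essentially the same route as the paper: shift $L$ by a constant exceeding $16\lambda\kappa^{-3}$, rewrite $Lw=f$ as $w-\mu K_{\mu}w=K_{\mu}f$ with $K_{\mu}=(L+\mu)^{-1}$ compact and self-adjoint on $L^{2}(\Omega)$, and apply the Fredholm alternative. The differences are minor. You get boundedness and compactness from the energy estimate and the compact embedding $H_{D}^{2}\hookrightarrow L^{2}$ instead of the $H^{4}$ a priori estimate, and you prove symmetry of $K_{\mu}$ directly from the bilinear form instead of through self-adjointness of $-L_{\gamma}$. You also make explicit, via $(K_{\mu}f,v)=\mu^{-1}(f,v)$ for $v\in\mathrm{Ker}(L)$, the conversion of the orthogonality condition on $K_{\mu}f$ into $f\in\mathrm{Ker}(L)^{\perp}$, a step the paper leaves implicit.
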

\begin{proof}
	Our proof is motivated by \cite[Theorem~6.2.4]{refnew40}. Let $\gamma>16\lambda\kappa^{-3}$ and define the operator $L_{\gamma}w:=Lw+\gamma w$. Then we can deduce from the discussion about \eqref{linearhua1} that, for any $h\in L^{2}(\Omega)$, $w\in H_{D}^{4}$ satisfies
	\begin{equation*}
		L_{\gamma}w=h.
	\end{equation*}
	We rewrite this equation as
	\begin{equation*}
		w=L_{\gamma}^{-1}h.
	\end{equation*}
	Note that $w\in H_{D}^{4}(\Omega)$ is the solution to \eqref{linearhua3} if and only if
	\begin{equation}\label{linearhua100}
		Lw+\gamma w=f+\gamma w,
	\end{equation}
	i.e.
	\begin{equation}\label{linearhua101}
		w=L_{\gamma}^{-1}(f+\gamma w).
	\end{equation}
	We rewrite this equation as
	\begin{equation}\label{linearhua102}
		w-Kw=g,
	\end{equation}
	where
	\begin{equation}\label{linearhua103}
		Kw:=\gamma L_{\gamma}^{-1}w,\quad g:=L_{\gamma}^{-1}f.
	\end{equation}
	We rewrite \eqref{linearhua101} as \eqref{linearhua102} to employ Fredholm Theorem (\cite[Appendix D: Theorem~5]{refnew40}). We claim that $K:L^{2}(\Omega)\to L^{2}(\Omega)$ is a linear compact operator.
	\begin{itemize}
		\item[(1)] Linearity: For any $f_{1},f_{2}\in L^{2}(\Omega)$, let $Kf_{1}=w_{1}$, $Kf_{2}=w_{2}$, then we have
		\begin{align*}
			K(f_{1}+f_{2})&=K\left(\frac{L_{\gamma}w_{1}}{\gamma}+\frac{L_{\gamma}w_{2}}{\gamma}\right)\\
			&=\frac{1}{\gamma}K(L_{\gamma}(w_{1}+w_{2}))\\
			&=\frac{1}{\gamma}\gamma L_{\gamma}^{-1}\gamma(w_{1}+w_{2})\\
			&=w_{1}+w_{2}=Kf_{1}+Kf_{2}.
		\end{align*}
		\item[(2)] Boundness: for any $f_{3}\in L^{2}(\Omega)$, Let $Kf_{3}=w_{3}$, then $w_{3}\in H_{D}^{4}$. Note that $Ker(L_{\gamma})=\left\{0\right\}$, by the priori estimate of ellptic equation, there is a constant $C_{1}>0$ such that
		\begin{equation*}
			\left\|w_{3} \right\|_{H^{4}}\le C_{1}\left\|f_{3} \right\|_{L^{2}}.
		\end{equation*}
		Note that $Kf_{3}=w_{3}$, and $H^{4}(\Omega)\hookrightarrow L^{2}(\Omega)$, then there is a constant $C_{2}>0$ such that
		\begin{equation*}
			\left\|Kf_{3}\right\|_{L^{2}}\le C_{2}\left\|Kf_{3}\right\|_{H^{4}}\le C_{1}C_{2}\left\|f_{3} \right\|_{L^{2}}.
		\end{equation*}
		Thus $K$ is bounded.
		\item[(3)] Compactness: Since the embedding $H^{4}(\Omega)\hookrightarrow L^{2}(\Omega)$ is compact, we immediately obtain that $K$ is compact.
	\end{itemize}
	Then by Fredholm Theorem, precisely one of the following two situations holds:
	\begin{itemize}
		\item[($\alpha$)] For every $h\in L^{2}(\Omega)$, the equation $w-Kw=h$ admits a unique solution $w\in L^{2}(\Omega)$.
		\item[($\beta$)] The equation $w-Kw=0$ has nontrivial solution $w\in L^{2}(\Omega)$.
	\end{itemize}
	If $(\alpha)$ holds, then by \eqref{linearhua100}-\eqref{linearhua103}, problem \eqref{linearhua3} admits a unique solution, at this time (\romannumeral2) does not hold, we obtain $Ker(L)=\left\{0\right\}$. On the other hands, if $(\beta)$ hold, then $dim(Ker(L))$ is finite. In addition, we need to show that the sufficient and necessary condition for solvability of \eqref{linearhua3} is $f\in Ker(L)^{\perp}$. Thus, we prove $L_{\gamma}$ defined on $H_{D}^{4}(\Omega)\subset L^{2}(\Omega)$ is a self
	adjoint operator. In fact we can show that $-L_{\gamma}$ is selfadjoint with upper bound 0.(\cite[Theorem~B.14]{refnew137}).
	\begin{itemize}
		\item[(a)] Symmetry: For any $w,v\in D(-L_{\gamma})=H_{D}^{4}(\Omega)$,
		\begin{align*}
			(-L_{\gamma}w,v)_{L^{2}}&=\int_{\Omega}\left(-B\Delta ^{2}w+T\Delta w+\frac{2\lambda w}{(1+\psi)^{3}}-\gamma w\right)vdx\\
			&=\int_{\Omega}\left(-B\Delta w \Delta v-T\nabla w \nabla v+\frac{2\lambda wv}{(1+\psi)^{3}}-\gamma wv\right)dx\\
			&=\int_{\Omega}\left(-B\Delta ^{2}v+T\Delta v+\frac{2\lambda v}{(1+\psi)^{3}}-\gamma v\right)wdx\\
			&=(w,-L_{\gamma}v)_{L^{2}}.
		\end{align*}
		\item[(b)] Upper bound 0: For any $v\in H_{D}^{4}(\Omega)$,
		\begin{align*}
			(-L_{\gamma}v,v)_{L^{2}}&=\int_{\Omega}\left(-B\Delta ^{2}v+T\Delta v+\frac{2\lambda v}{(1+\psi)^{3}}-\gamma v\right)vdx\\
			&=\int_{\Omega}\left(-B|\Delta v|^{2}-T|\nabla v|^{2}+\left(\frac{2\lambda}{(1+\psi)^{3}}-\gamma\right)|v|^{2}\right)dx\\
			&\le 0 \le (w,w)_{L^{2}}.
		\end{align*}
		\item[(c)] There is $\lambda>0$ such that $Ran(\lambda-(-L_{\gamma}))=L^{2}(\Omega)$ (where $Ran$ denotes image space), by the discussion above, we can get for all $\mu>0$, $Ran(\mu+L_{\gamma})=L^{2}(\Omega)$.
	\end{itemize}
	Therefore we can deduce from \cite[Theorem~B.14]{refnew137} that $-L_{\gamma}$ is selfadjoint with upper bound 0, which means $-L_{\gamma}$ is self-adjoint. Then $K$ is self-adjoint. Then by Fredholm Theorem, $Ran(I-K)=Ker(I-K^{*})^{\perp}=Ker(I-K)^{\perp}$. Thus, $Ker(I-K)=\left\{0\right\}$ if and only if $Ran(I-K)=L^{2}(\Omega)$. This means the the the sufficient and necessary condition for solvability of \eqref{linearhua3} is $f\in Ker(L)^{\perp}$. Lemma~\ref{fredholm 2zeyi} is proved.
\end{proof}

Now we are ready to prove the core lemma in this subsection, which establishs the Lojasiewicz-Simon inequality for problem \eqref{neweq1.1}.
\begin{lemma}\label{LSbudengshipaowupaowu}
	There are constants $\sigma$ and $\theta\in (0,\frac{1}{2})$ depending on $\psi$ such that for all $u\in X(\kappa)$ satisfying $\left\|u-\psi \right\|_{H_{D}^{4}}<\sigma$,
	\begin{equation}
		\left\|B\Delta^{2}u-T\Delta u+\frac{\lambda}{(1+u)^{2}} \right\|_{L^{2}}\ge |E(u)-E(\psi)|^{1-\theta}.
	\end{equation}
	where $E$ is defined by \eqref{eqnewnew555}.
\end{lemma}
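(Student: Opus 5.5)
The plan is a Lyapunov--Schmidt reduction to the finite-dimensional Lojasiewicz inequality (Theorem~\ref{yibandeLS}), built on the Fredholm structure of the linearization $L$ from Lemma~\ref{fredholm 2zeyi}. One could instead try to verify (H1)--(H3) of Theorem~\ref{prethm2.8} directly. However, $F(x,s)=\lambda/(1+s)$ is analytic only for $s>-1$, and that theorem yields the $V'$ norm rather than the $L^2$ norm. I will therefore do the reduction by hand, using that every $u\in X(\kappa)$ and $\psi$ satisfy $\|u\|_{L^\infty},\|\psi\|_{L^\infty}\le 1-\kappa/2$. On this range the Nemytskii map $u\mapsto \lambda(1+u)^{-2}$ is real analytic from $H_D^4(\Omega)\subset L^\infty(\Omega)$ into $L^2(\Omega)$, since its Taylor series converges uniformly in $L^\infty$.

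\textbf{Step 1 (splitting).} Set $\mathcal{M}(u):=B\Delta^2u-T\Delta u+\lambda(1+u)^{-2}$, so that $\mathcal{M}$ is the $L^2$-gradient of $E$ and $\mathcal{M}'(\psi)=L$. Let $P$ be the $L^2$-orthogonal projection onto the finite-dimensional space $N:=Ker(L)\subset H_D^4(\Omega)$. By Lemma~\ref{fredholm 2zeyi} and self-adjointness, $\mathcal{L}:=L+P$ is an isomorphism from $H_D^4(\Omega)$ onto $L^2(\Omega)$.

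\textbf{Step 2 (reduced map).} Define $\mathcal{N}(u):=\mathcal{M}(u)+P(u-\psi)$. Then $\mathcal{N}'(\psi)=\mathcal{L}$, and $\mathcal{N}$ is analytic near $\psi$. By the analytic inverse function theorem there are neighbourhoods $U\ni\psi$ in $H_D^4$ and $W\ni0$ in $L^2$, and an analytic inverse $\Psi:W\to U$, with $\Psi$ Lipschitz. Put $\Gamma(\xi):=E(\Psi(\xi))$ for $\xi$ in a small ball of $N\cong\mathbb{R}^m$; this is an analytic function of finitely many variables. A chain-rule computation using $\mathcal{M}(\Psi(\xi))=\xi-P(\Psi(\xi)-\psi)\in N$ gives $\nabla\Gamma(\xi)=P\,\Psi'(\xi)^*\mathcal{M}(\Psi(\xi))$. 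Hence $|\nabla\Gamma(\xi)|\le C\|\mathcal{M}(\Psi(\xi))\|_{L^2}$, and $0$ is a critical point of $\Gamma$ with $\Gamma(0)=E(\psi)$. Theorem~\ref{yibandeLS} then gives $|\Gamma(\xi)-E(\psi)|^{1-\theta_0}\le|\nabla\Gamma(\xi)|$ for $|\xi|\le\sigma_0$, with some $\theta_0\in(0,\frac12)$.

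\textbf{Step 3 (back to $u$).} For $u$ near $\psi$, set $\xi:=P(u-\psi)\in N$ and $\tilde u:=\Psi(\xi)$. Then $\mathcal{N}(u)-\mathcal{N}(\tilde u)=\mathcal{M}(u)+\xi-\xi+\dots$, which yields $\|u-\tilde u\|_{H^4}\le C\|\mathcal{N}(u)-\mathcal{N}(\tilde u)\|_{L^2}\le C\|\mathcal{M}(u)\|_{L^2}$ after handling the projection terms; this bookkeeping is routine but fiddly. Taylor's formula (Theorem~\ref{prethm2.6}) at $u$ gives
\[
|E(\tilde u)-E(u)|\le\|\mathcal{M}(u)\|_{L^2}\|u-\tilde u\|_{L^2}+C\|u-\tilde u\|_{H^2}^2\le C\|\mathcal{M}(u)\|_{L^2}^2 .
\]
Similarly, $\|\mathcal{M}(\tilde u)\|_{L^2}\le\|\mathcal{M}(u)\|_{L^2}+C\|u-\tilde u\|_{H^4}\le C\|\mathcal{M}(u)\|_{L^2}$. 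Combining these,
\[
|E(u)-E(\psi)|^{1-\theta_0}\le|E(\tilde u)-E(\psi)|^{1-\theta_0}+C\|\mathcal{M}(u)\|_{L^2}^{2(1-\theta_0)}\le C\|\mathcal{M}(u)\|_{L^2},
\]
where the last step uses $2(1-\theta_0)>1$ and smallness of $\|\mathcal{M}(u)\|_{L^2}$, which holds by continuity of $\mathcal{M}$. To remove the constant $C$, pick $\theta<\theta_0$ and shrink $\sigma$ so that $|E(u)-E(\psi)|$ is small. Then $|E(u)-E(\psi)|^{1-\theta}\le C|E(u)-E(\psi)|^{\theta_0-\theta}\,|E(u)-E(\psi)|^{1-\theta_0}\le\|\mathcal{M}(u)\|_{L^2}$.

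The main obstacle is Step 2. It needs a rigorous statement of analyticity of the Nemytskii operator between $H_D^4$ and $L^2$ near $\psi$, together with the analytic inverse function theorem, so that $\Gamma$ is genuinely analytic on $\mathbb{R}^m$. I would prove analyticity directly from the power series of $(1+s)^{-2}$ about $\psi(x)$. Its radius of convergence is at least $\kappa/2$ uniformly in $x$, and $H^4\hookrightarrow L^\infty$ together with the Banach algebra property of $L^\infty$ gives convergence in $L^2$.
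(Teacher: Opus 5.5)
Your proposal is correct and is essentially the paper's argument. The paper likewise sets $\mathcal{N}(v)=M(v)+\Pi v$, inverts it analytically near $0$, and reduces to $\Gamma(\xi)=E(\Psi(\sum_j\xi_j\phi_j)+\psi)$ on $\mathrm{Ker}(L)\cong\mathbb{R}^m$. It then proves $|\nabla\Gamma(\xi)|\le C\|M(v)\|_{L^2}$ and $|E(u)-\Gamma(\xi)|\le C\|M(v)\|_{L^2}^2$ and finishes with the same exponent adjustment. The only organizational difference is that the paper handles $\mathrm{Ker}(L)=\{0\}$ as a separate inverse-function case, which your argument covers as the degenerate case $m=0$ (where $\tilde u=\psi$). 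Also, the ``fiddly bookkeeping'' in your Step 3 is actually exact: since $\mathcal{N}(\tilde u)=\xi$, you get $\mathcal{N}(u)-\mathcal{N}(\tilde u)=\mathcal{M}(u)$ with no leftover projection terms.
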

\begin{proof}
	Hereafter we use $C$ to denote a positive constant that may vary in different places. Based on Lemma~\ref{fredholm 2zeyi}, We will discuss in two situations:
	1. $Ker(L)=\left\{0\right\}$. Let $u=v+\psi$. Consider the map
	\begin{equation*}
		M(v)=B\Delta ^{2}u-T\Delta u+\frac{\lambda}{(1+u)^{2}}: H_{D}^{4}(\Omega)\to L^{2}(\Omega).
	\end{equation*}
	A direct calculation yields that $M\in C^{1}(H_{D}^{4}(\Omega),L^{2}(\Omega))$ and for any $v,h\in H_{D}^{4}(\Omega)$,
	\begin{equation}\label{eqnew39.1}
		DM(v)(h)=B\Delta^{2}h-T\Delta h-\frac{2\lambda h}{(1+u)^{3}}.
	\end{equation}
	where $DM$ denotes the Fr{\'e}chet derivative of $M$, and we get $DM(0)=L$. By the definition of $L$ and the regularity of $u$, we obtain that $L\in \mathcal{L}(H_{D}^{4}(\Omega),L^{2}(\Omega))$. By Lemma~\ref{fredholm 2zeyi}, $Ker(L)={0}$ and $Ran(L)=L^{2}(\Omega)$, which means $L$ is bijective. It then follows from Banach inverse theorem, $L\in \mathrm{Inv}(H_{D}^{4}(\Omega),L^{2}(\Omega))$. Thus, by the local inverse theorem, there is a neighborhood $W_{1}$ of the origin in $H_{D}^{4}(\Omega)$ and a neighborhood $W_{2}$ of the origin in $L^{2}(\Omega)$, and the map
	\begin{equation*}
		\Psi:W_{2}\to W_{1}
	\end{equation*}
	such that
	\begin{gather*}
		M(\Psi(g))=g, \quad \forall g\in W_{2},\\
		\Psi(M(v))=v,\quad \forall v\in W_{1}.
	\end{gather*}
	In addition, there is $C>0$ such that
	\begin{gather}
		\left\|\Psi(g_{1})-\Psi(g_{2}) \right\|_{H_{D}^{4}}\le C\left\|g_{1}-g_{2} \right\|_{L^{2}},\quad \forall g_{1},g_{2\in W_{2}},\label{eqnew39.2}\\
		\left\|M(v_{1})-M(v_{2}) \right\|_{L^{2}}\le C\left\|v_{1}-v_{2} \right\|_{H_{D}^{4}},\quad \forall v_{1},v_{2\in W_{1}}.\label{eqnew39.3}
	\end{gather}
	For $w\in H_{D}^{4}(\Omega)$, by integral by parts, we get
	\begin{align*}
		DE(w)\cdot v&=\int_{\Omega}\left(B\Delta w \Delta v+T\nabla w \nabla v+\lambda \frac{v}{(1+w)^{2}}\right)dx\\
		&=\int_{\Omega}\left(B\Delta^{2}w-T\Delta w+\frac{\lambda}{(1+w)^{2}}\right)vdx.
	\end{align*}
	Then we can deduce from \eqref{eqnew39.1} and \eqref{eqnew39.3} that
	\begin{align}
		|E(u)-E(\psi)|&=\left|\int_{0}^{1}\frac{d}{dt}E(tu+(1-t)\psi)dt \right|\notag\\
		&=\left|DE(tu+(1-t)\psi)\cdot(u-\psi)dt \right|\notag\\
		&\le \max_{0\le t\le 1}\left\|M(tu+(1-t)\psi) \right\|_{L^{2}}\left\|u-\psi \right\|_{L^{2}}\notag\\
		&\le C\left\|u-\psi \right\|_{H_{D}^{4}}\left\|u-\psi \right\|_{L^{2}}\notag\\
		&\le C\left\|u-\psi \right\|_{H_{D}^{4}}^{2}\label{eqnew39.4}.
	\end{align}
	Since $\psi=\Psi(0)$, $u=\Psi(M(v))$, we have
	\begin{equation}\label{eqnew39.5}
		\left\|u-\psi \right\|_{H_{D}^{4}}=\left\|\Psi(M(v))-\Psi(0) \right\|_{H_{D}^{4}}\le C\left\|M(v) \right\|_{L^{2}}.
	\end{equation}
	Then it follows from \eqref{eqnew39.4} and \eqref{eqnew39.5} that
	\begin{equation*}
		|E(u)-E(\psi)|^{\frac{1}{2}}\le C\left\|M(v) \right\|_{L^{2}}.
	\end{equation*}
	This indicates
	\begin{equation*}
		\left\|M(v) \right\|_{L^{2}}\ge C|E(u)-E(\psi)|^{\frac{1}{2}}.
	\end{equation*}
	Let $\varepsilon>0$ and $\sigma$ be small enough such that $\left\|v \right\|_{H_{D}^{4}}<\sigma$, we obtain that
	\begin{equation*}
		C\left|E(u)-E(\psi) \right|^{-\varepsilon}\ge 1.
	\end{equation*}
	Thus,
	\begin{equation*}
		\left\|M(v) \right\|_{L^{2}}\ge \left|E(u)-E(\psi)\right|^{1-\tilde{\theta}},
	\end{equation*}
	where
	\begin{equation*}
		0<\tilde{\theta}=\theta-\varepsilon<\frac{1}{2}.
	\end{equation*}
	
	2. $dim(Ker(L))=m>0$ : Recall that the the the sufficient and necessary condition for solvability of equation \eqref{linearhua3} is
	\begin{equation*}
		f\in (Ker(L))^{\perp}.
	\end{equation*}
	Let $(\phi_{1},\cdots,\phi_{m})$ denote normalized orthogonal basis of $Ker(L)$ in $L^{2}(\Omega)$, let $\Pi$ be the projection from $L^{2}(\Omega)$ to $Ker(L)$. Now we define
	\begin{align*}
		\tilde{\mathcal{L}}:H_{D}^{4}(\Omega)&\to L^{2}(\Omega)\\
		w &\mapsto \Pi w+Lw.
	\end{align*}
	Then $\tilde{\mathcal{L}}$ is bijective. Similar to case 1, we define $u=v+\psi$ and
	\begin{equation}\label{eqnewnew39.3}
		M(v)=B\Delta ^{2}u-T\Delta u+\frac{\lambda}{(1+u)^{2}}: H_{D}^{4}(\Omega)\to L^{2}(\Omega).
	\end{equation}
	Recall that $DM(0)=L$, let
	\begin{align}
		\mathcal{N}:H_{D}^{4}(\Omega)&\to L^{2}(\Omega)\notag\\
		v &\mapsto M(v)+\Pi v. \label{eqnewnew39.4}
	\end{align}
	Thus, $D\mathcal{N}(0)=\tilde{\mathcal{L}}$. Then by local inverse theorem, there is a neighborhood $W_{1}$ of the origin
	in $H_{D}^{4}(\Omega)$ and a neighborhood $W_{2}$ of the origin in $L^{2}(\Omega)$, and the map
	\begin{equation*}
		\Psi:W_{2}\to W_{1}
	\end{equation*}
	such that
	\begin{gather*}
		\mathcal{N}(\Psi(g))=g, \quad \forall g\in W_{2},\\
		\Psi(\mathcal{N}(v))=v,\quad \forall v\in W_{1}.
	\end{gather*}
	In addition, there is a constant $C>0$ such that
	\begin{gather}
		\left\|\Psi(g_{1})-\Psi(g_{2}) \right\|_{H_{D}^{4}}\le C\left\|g_{1}-g_{2} \right\|_{L^{2}},\quad \forall g_{1},g_{2}\in W_{2},\label{eqnew399.1}\\
		\left\|\mathcal{N}(v_{1})-\mathcal{N}(v_{2}) \right\|_{L^{2}}\le C\left\|v_{1}-v_{2} \right\|_{H_{D}^{4}},\quad \forall v_{1},v_{2}\in W_{1}.\label{eqnew399.2}
	\end{gather}
	To employ the Lojasiewicz-Simon inequality in $\mathbb{R}^{m}$ , i.e. Theorem~\ref{yibandeLS}, we will prove $M(v)$ is analytic in a neighborhood of $0$. We will use Theorem~\ref{prethm2.6} and Definition~\ref{predef2.10}.
	
	A direct calculation yields
	\begin{equation*}
		D^{(n)}M(0)(v_{1},\cdots,v_{n})=(-1)^{n}\frac{(n+1)!(v_{1}v_{2}\cdots v_{n})}{(1+\psi)^{n+2}},\quad n=2,3,\cdots.
	\end{equation*}
	Let $v_{n}=v$ ($n=2,3,\cdots$). By the property of Fr{\'e}chet derivative ( \cite[Chapter~1]{ref22}), for any $n\ge 1$, $D^{n}M(0)\in \mathcal{B}_{n}(H_{D}^{4}(\Omega),L^{2}(\Omega))$ is symmetry. In addition, by theorem~\ref{prethm2.6}, we know that
	\begin{align}
		M(0+v)=M(0)+\sum_{k=1}^{n-1}\frac{1}{k!}D^{k}M(0)v^{k}+R_{n},\label{eqnew399.3}\\
		R_{n}=\int_{0}^{1}\frac{(1-\tau)^{n-1}}{(n-1)!}D^{k}M(0+\tau v)v^{n}d\tau \notag,
	\end{align}
	where $D^{k}M(0)v^{k}:=D^{k}M(0)(v,\cdots,v)$. Thus we need to show that when $n\to +\infty$, the remainder $R_{n}\to 0$. In fact, since $\int_{0}^{1}(1-\tau)^{n-1}d\tau=\frac{1}{n}$, we have
	\begin{align*}
		\left\| R_{n}\right\|_{L^{2}}&\le \frac{1}{n!}\sup_{0\le \tau \le 1} \left \| M^{(n)}(0+\tau v)v^{n} \right \| _{L^{2}}\\
		&\le \sup_{0\le \tau \le 1}\left\| \frac{n+1}{(1+0+\tau v)^{n+2}}v^{n}\right\|_{L^{2}}.
	\end{align*}
	Since the embedding $H^{4}(\Omega)\hookrightarrow L^{\infty}(\Omega)$ is continuous, in fact we can choose $\left\|v \right\|_{H_{D}^{4}}$ be small enough such that $\left\|v \right\|_{L^{\infty}}<\kappa$, and for all $0\le \tau \le 1$
	\begin{equation*}
		1+0+\tau v>\kappa.
	\end{equation*}
	At this time we have
	\begin{equation*}
		\sup_{0\le \tau \le 1}\left|\frac{v}{1+0 +\tau v} \right|<1.
	\end{equation*}
	This indicates that when $\|v\|_{L^{\infty}}<\frac{\kappa}{2}$,
	\begin{equation*}
		\lim_{n \to +\infty}\left\| R_{n}\right\|_{L^{2}}\le 4\kappa ^{-2}|\Omega|^{\frac{1}{2}}	\lim_{n \to +\infty}\left[\left(n+1\right)\sup_{0\le \tau \le 1}\left|\frac{v}{1+0 +\tau v} \right|^{n}\right] = 0.
	\end{equation*}
	Thus, if $\left\|v \right\|_{H_{D}^{4}}$ is small enough, the convergence of \eqref{eqnew399.3} with respect to $v\in H_{D}^{4}(\Omega
	)$ is uniform. Therefore $M(v)$ is analytic at $0$. Then it follows from local inverse theorem in Banach space (\cite[Section~4.3 Standard Example 9]{newrefzei}) that $\Psi$ is analytic at the origin of $L^{2}(\Omega)$. Let
	\begin{equation*}
		\xi=(\xi_{1},\cdots,\xi_{m})\in \mathbb{R}^{m},\quad \Pi v=\sum_{j=1}^{m}\xi_{j} \phi_{j}.
	\end{equation*}
	We can choose $\xi$ be small enough such that
	\begin{equation*}
		\sum_{j=1}^{m}\xi_{j} \phi_{j}\in W_{2}.
	\end{equation*}
	Define $\Gamma$:
	\begin{equation*}
		\Gamma(\xi)=E(\Psi(\sum_{j=1}^{m}\xi_{j} \phi_{j})+\psi).
	\end{equation*}
	Similarly, we can verify $E$ is analytic. By the chain rule, we get $\Gamma$ is analytic at the origin of $\mathbb{R}^{m}$. Furthermore, by \eqref{eqnew399.1} and \eqref{eqnew399.2}, we obtain that
	\begin{gather*}
		D\mathcal{N}(v)\in \mathcal{L}(H_{D}^{4}(\Omega),L^{2}(\Omega)),\quad \forall v\in W_{1},\\
		D\Psi(g)\in \mathcal{L}(L^{2}(\Omega),H_{D}^{4}(\Omega)),\quad \forall g\in W_{2}.
	\end{gather*}
	For any $j\in \left\{1,\cdots,m\right\}$, we have
	\begin{equation}\label{eqnewnew39.1}
		\frac{\partial \Gamma}{\partial \xi_{j}}=DE(\Psi(\sum_{j=1}^{m}\xi_{j} \phi_{j})+\psi)\cdot D\Psi \cdot \phi_{j}.
	\end{equation}
	Similar to case 1, for $w,v\in H_{D}^{4}(\Omega)$,
	\begin{equation}\label{eqnewnew39.2}
		DE(w)\cdot v =\int_{\Omega}\left(B\Delta^{2}w-T\Delta w+\frac{\lambda}{(1+w)^{2}}\right)vdx.
	\end{equation}
	Since $\psi \in \mathcal{S}$, by the definition of $\mathcal{S}$, $\nabla \Gamma (0)=0$. Thus, for all $j$, $1\le j\le m$, we can deduce from \eqref{eqnewnew39.3}, \eqref{eqnewnew39.1}, \eqref{eqnewnew39.2} and $|\phi_{j}|=1$ that
	\begin{align*}
		\left|\frac{\partial \Gamma}{\partial \xi_{j}} \right|&\le \|M(\Psi(\sum_{j=1}^{m}\xi_{j} \phi_{j})) \|_{L^{2}} \|D\Psi(\sum_{j=1}^{m}\xi_{j} \phi_{j}) \|_{\mathcal{L}(L^{2}(\Omega),H_{D}^{4}(\Omega)}\\
		&\le  C \|M(\Psi(\sum_{j=1}^{m}\xi_{j} \phi_{j})) \|_{L^{2}}.
	\end{align*}
Recall that $\Pi v=\sum_{j=1}^{m}\xi_{j} \phi_{j}$, we have
	\begin{align}
		|\nabla \Gamma(\xi)|&\le C\left\|M(\Psi(\Pi v)) \right\|_{L^{2}}\notag\\
		&\le \left\|M(\Psi(\Pi v))-M(v)+M(v) \right\|_{L^{2}}\notag\\
		&\le C(\left\|M(v) \right\|_{L^{2}}+\left\|M(\Psi(\Pi v))-M(v) \right\|_{L^{2}}).\label{eqeqnewnew39.1}
	\end{align}
	Then it follows from \eqref{eqnewnew39.4} and \eqref{eqnew399.2} that
	\begin{align}
		\|M(v_{1})-M(v_{2}) \|_{L^{2}}&\le\|\Pi v_{1}-\Pi v_{2} \|_{L^{2}}+\|\mathcal{N}(v_{1})-\mathcal{N}(v_{2}) \|_{L^{2}} \notag\\
		&\le \|v_{1}-v_{2} \|_{L^{2}} +C\|v_{1}-v_{2} \|_{H_{D}^{4}}\notag\\
		&\le C\|v_{1}-v_{2} \|_{H_{D}^{4}}.\label{eqnewnew39.5}
	\end{align}
	By \eqref{eqnew399.1}, we obtain
	\begin{equation}\label{eqeqnewnew39.2}
		\left\|\Psi(\Pi v)-v \right\|_{H_{D}^{4}}=\left\|\Psi (\Pi v)-\Psi (\mathcal{N}(v)) \right\|_{H_{D}^{4}}\le C\left\|\Pi v-\mathcal{N}(v) \right\|_{L^{2}}=C\left\|M(v) \right\|_{L^{2}}.
	\end{equation}
	Thus, combining \eqref{eqeqnewnew39.1}-\eqref{eqeqnewnew39.2} yields
	\begin{align}
		\left|\nabla \Gamma(\xi) \right|&\le C(\left\|M(v) \right\|_{L^{2}}+C\left\|\Psi (\Pi v)-v \right\|_{H_{D}^{4}})\notag\\
		&\le C(\left\|M(v) \right\|_{L^{2}}+C^{2}\left\|M(v) \right\|_{L^{2}})\notag\\
		&\le C\left\|M(v) \right\|_{L^{2}}\label{eqeqnewnew25.1}.
	\end{align}
	On the other hands,
	\begin{align}
		\left|E(u)-\Gamma(\xi) \right|&=\left|E(u)-E(\Psi(\Pi v)+\psi) \right|\notag\\
		&\le \left|\int_{0}^{1}\frac{d}{dt}E(u+(1-t)(\Psi (\Pi(v)-v)) \cdot (\Psi (\Pi(v)-v))dt \right|\notag\\
		&\le \left|\int_{0}^{1}DE(u+(1-t)(\Psi (\Pi v)-v))\cdot(\Psi (\Pi v)-v) dt\right|\notag\\
		&\le \max_{0\le t \le 1}\left\|M(v+(1-t)(\Psi (\Pi v)-v)) \right\|_{L^{2}} \left\|\Psi (\Pi v)-v \right\|_{L^{2}}\notag\\
		&\le (\left\|M(v) \right\|_{L^{2}}+C\left\|\Psi (\Pi v)-v \right\|_{H_{D}^{4}})\cdot C\left\|\Psi (\Pi v)-v \right\|_{H_{D}^{4}}\notag \\
		&\le (\left\|M(v) \right\|_{L^{2}}+C^{2}\left\|M(v) \right\|_{L^{2}})\cdot  C^{2}\left\|M(v) \right\|_{L^{2}}\notag\\
		&\le C\left\|M(v) \right\|_{L^{2}}^{2}\label{eqnewnew25.2}.
	\end{align}
	By Theorem~\ref{yibandeLS}, there are constants $\sigma>0$ and $\theta\in (0,\frac{1}{2})$ such that
	\begin{equation*}
		\left|\nabla \Gamma (\xi) \right|\ge \left|\Gamma(\xi)-\Gamma(0) \right|^{1-\theta}.
	\end{equation*}
	We can choose $\xi$ be small enough such that	\begin{equation*}
		|\xi|<\sigma \Longrightarrow  \Pi v=\sum_{j=1}^{m}\xi_{j}\phi_{j}\in W_{2}.
	\end{equation*}
	By the definition of $\Gamma$, $\Gamma(0)=E(\psi)$, this means
	\begin{equation}\label{eqneweqnew32.1}
		\left|\nabla \Gamma (\xi) \right|\ge \left|\Gamma(\xi)-E(\psi) \right|^{1-\theta}.
	\end{equation}
	Combining \eqref{eqeqnewnew25.1}-\eqref{eqneweqnew32.1} yields
	\begin{align}
		\left|E(u)-E(\psi) \right|^{1-\theta}&\le \left|E(u)-\Gamma(\xi)+\Gamma(\xi)-E(\psi) \right|^{1-\theta}\notag\\&\le \left|E(u)-\Gamma(\xi) \right|^{1-\theta}+\left|\Gamma(\xi)-E(\psi) \right|^{1-\theta}\notag\\&\le C^{1-\theta}\left\|M(v) \right\|_{L^{2}}^{2(1-\theta)}+C\left\|M(v) \right\|_{L^{2}}.\label{eqneweqnew25.3}
	\end{align}
	Since $\theta\in (0,\frac{1}{2})$, $2(1-\theta)>0$. Then when $v\in W_{1}$, we have
	\begin{equation}\label{eqeqeqnewnew26.6}
		\left\|M(v) \right\|_{L^{2}}^{2(1-\theta)}\le \left\|M(v) \right\|_{L^{2}}.
	\end{equation}
	Thus combining \eqref{eqneweqnew25.3} with \eqref{eqeqeqnewnew26.6}, we obtain
	\begin{equation*}
		\left\|M(v) \right\|_{L^{2}}\ge C\left|E(u)-E(\psi) \right|^{1-\theta}.
	\end{equation*}
	Let $\varepsilon>0$ be small enough, we can choose $\sigma$ small enough such that $\left\|v \right\|_{H_{D}^{4}}\le \sigma$. At this time we have
	\begin{equation*}
		C\left|E(u)-E(\psi) \right|^{-\varepsilon}\ge 1.
	\end{equation*}
	Thus we get
	\begin{equation*}
		\left\|M(v) \right\|_{L^{2}}\ge \left|E(u)-E(\psi) \right|^{1-\theta'},
	\end{equation*}
	where
	\begin{equation*}
		0<\theta'=\theta-\varepsilon<\frac{1}{2}.
	\end{equation*}
	Lemma~\ref{LSbudengshipaowupaowu} is proved.
\end{proof}
\subsection{Convergence rate}
In this subsection, we will prove the global solution to \eqref{neweq1.1} must converge to a stationary solution. Furthermore, we will establish the convergence rate. Based on the discussion above, we are now ready to prove the convergence of solution established in Theorem~\ref{theorem1.2}.
\begin{theorem}\label{shoulianxingpaowu}
	Under the assumptions of Theorem~\ref{theorem1.2}, let $u$ be the unique global solution to \eqref{neweq1.1} satisfying $u\in X(\kappa)$, then there exists $\psi\in \mathcal{S}$ such that
	\begin{equation*}
		\lim_{t \to+\infty}\left\|u(t,\cdot)-\psi \right\|_{H_{D}^{4}}=0.
	\end{equation*}
\end{theorem}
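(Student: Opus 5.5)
We will follow the standard Lojasiewicz--Simon argument, combining the gradient-system structure of Lemma~\ref{eqnewlemma35.1} with the inequality of Lemma~\ref{LSbudengshipaowupaowu}. By Lemma~\ref{eqnewlemma35.1} and Theorem~\ref{prethm2.4}, $\omega(u_0)$ is nonempty, compact and contained in $\mathcal{S}$. We fix $\psi\in\omega(u_0)$ and a sequence $t_n\to\infty$ with $u(t_n)\to\psi$ in $H_D^4(\Omega)$. Since $E(u(t))$ is nonincreasing and bounded below (Lemma~\ref{newnlfh}) and continuous on $X(\kappa)$, we have $E(u(t))\downarrow E(\psi)$. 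If $E(u(t_*))=E(\psi)$ for some finite $t_*$, then \eqref{eqnew30.1} gives $u_t\equiv0$ for $t\ge t_*$, and we are done. So we assume $E(u(t))>E(\psi)$ for all $t$.

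Let $\sigma,\theta$ be as in Lemma~\ref{LSbudengshipaowupaowu}. Whenever $\|u(t)-\psi\|_{H_D^4}<\sigma$, we compute, using $u_t=-M(u-\psi)$,
\begin{equation*}
-\frac{d}{dt}\big(E(u(t))-E(\psi)\big)^{\theta}=\theta\big(E(u)-E(\psi)\big)^{\theta-1}\|u_t\|_{L^2}^2\ge\theta\|u_t\|_{L^2}.
\end{equation*}
Integrating this over any interval $[s,t]$ on which $u$ stays in the $\sigma$-ball yields
\begin{equation*}
\int_s^t\|u_t\|_{L^2}\,d\tau\le\theta^{-1}\big(E(u(s))-E(\psi)\big)^{\theta}.
\end{equation*}

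The main obstacle is that the Lojasiewicz--Simon inequality is measured in $H_D^4$, while the dissipation only controls $\int\|u_t\|_{L^2}$, i.e.\ the $L^2$ length of the trajectory. We bridge this gap with the relative compactness of the orbit in $H_D^4$ (uniform $H^5$ bound for $t\ge t_1$), via a contradiction argument. Choose $n$ large so that $\|u(t_n)-\psi\|_{H_D^4}<\sigma/2$ and $\theta^{-1}(E(u(t_n))-E(\psi))^\theta<\varepsilon$. Let $\bar t$ be the first exit time after $t_n$ from the $\sigma$-ball. On $[t_n,\bar t]$ the estimate above gives $\|u(t)-\psi\|_{L^2}\le\|u(t_n)-\psi\|_{L^2}+\varepsilon$, which is small. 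If such exits happened for every $n$ (with $\varepsilon=\varepsilon_n\to0$), the exit points $u(\bar t_n)$ would have an $H_D^4$-convergent subsequence by compactness. Its limit $\phi$ satisfies $\|\phi-\psi\|_{H_D^4}=\sigma$, yet $\|\phi-\psi\|_{L^2}=0$ because the $L^2$ limit is $\psi$. This is a contradiction, so for some $n$ the trajectory never leaves the ball.

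Consequently $\int_{t_n}^\infty\|u_t\|_{L^2}<\infty$, so $u(t)$ converges in $L^2$. Its limit must be $\psi$, and since $\{u(t)\}_{t\ge t_1}$ is precompact in $H_D^4$ and every $H_D^4$-limit point coincides with the $L^2$ limit $\psi$, the whole trajectory converges in $H_D^4$. This gives $\lim_{t\to\infty}\|u(t)-\psi\|_{H_D^4}=0$ with $\psi\in\mathcal{S}$.
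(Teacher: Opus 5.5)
Your proposal is correct and follows essentially the same route as the paper. The common steps are: the differential inequality $-\frac{d}{dt}\big(E(u)-E(\psi)\big)^{\theta}\ge\theta\|u_t\|_{L^2}$ from Lemma~\ref{LSbudengshipaowupaowu}; an exit-time contradiction that combines $L^2$-smallness of the displacement with $H_D^4$-precompactness of the orbit; and finite $L^2$ length upgraded to $H_D^4$ convergence by compactness. Your separate handling of the case $E(u(t_*))=E(\psi)$ and your use of $\varepsilon_n\to 0$ in the contradiction step only make explicit points that the paper's proof leaves implicit.
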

\begin{proof}
	Recall that in section~\ref{paowuLSbudengshi}, there are $\psi\in \omega(u_{0})\subset \mathcal{S}$ and a sequence $t_{n}\to +\infty$, such that
	\begin{equation*}
		u(t_{n},\cdot)\to \psi\,\,\textup{in }X(\kappa).
	\end{equation*}
	Let
	\begin{equation*}
		H(t)=E(u)-E(\psi).
	\end{equation*}
	where $E(u)$ is as \eqref{eqnewnew555}, and $\theta$ is as in Lemma~\ref{LSbudengshipaowupaowu}. Thus, we have
	\begin{align}
		-\frac{d}{dt}(H(t))^{\theta}&=-\theta (H(t))^{\theta-1}H'(t)\notag\\
		&-\theta (E(u)-E(\psi))^{\theta-1}\frac{d}{dt}(E(u)-E(\psi)).\label{eqwen16.7}
	\end{align}
	We can deduce from \eqref{neweq1.1}, \eqref{eqnew111} and \eqref{eqnewnew555} that, for any $u\in X(\kappa)$ satisfying $\left\|u-\psi \right\|_{H_{D}^{4}}<\sigma$,
	\begin{align}
		\frac{d}{dt}(E(u)-E(\psi))&=\frac{d}{dt}E(u)\notag\\
		&=-\left\| u_{t}\right\|_{L^{2}}^{2}\notag\\
		&=-\left\| u_{t}\right\|_{L^{2}} \left\| -B\Delta^{2}u+T\Delta u-\frac{\lambda}{(1+u)^{2}}\right\|_{L^{2}}\notag\\
		&\le -\left\|u_{t} \right\|_{L^{2}}\left|E(u)-E(\psi) \right|^{1-\theta}.\label{eqwen16.6}
	\end{align}
Combining \eqref{eqwen16.7} with \eqref{eqwen16.6} yields
	\begin{equation}\label{eqwen16.9}
		-\frac{d}{dt}(H(t))^{\theta}\ge \theta\left\|u_{t} \right\|_{L^{2}}.
	\end{equation}
	Since $u(t_{n})\to \psi$ in $X(\kappa)$, and $E(u)$ is continuous, we get $E(u(t_{n}))\to E(\psi)$. Thus, for any $0<\varepsilon<\sigma$ ($\sigma$ is as Lemma~\ref{LSbudengshipaowupaowu}), there is $N\in \mathbb{N}$ such that when $n\ge N$,
	\begin{equation}\label{eqwen16.8}
		\left\|u(t_{n},\cdot)-\psi \right\|_{H_{D}^{4}}<\frac{\varepsilon}{2},\quad \frac{1}{\theta}(E(u(t_{n}))-E(\psi))^{\theta}<\frac{\varepsilon}{2}.
	\end{equation}
	For $n\ge N$, let
	\begin{equation*}
		\bar{t}_{n}=\sup\left\{t\ge t_{n}:\left\|u(s,\cdot)-\psi \right\|_{H_{D}^{4}}<\sigma, \ \forall s\in[t_{n},t]\right\}.
	\end{equation*}
	We claim that $\bar{t}_{n}=+\infty$. Otherwise, if $\bar{t}_{n}<+\infty$, when $t\in [t_{n},\bar{t}_{n}]$, integrating \eqref{eqwen16.9} with respect to $t$ yields
	\begin{equation}\label{eqwen17.1}
		\int_{t_{n}}^{\bar{t}_{n}}\left\|u_{t} \right\|_{L^{2}}d\tau \le \frac{1}{\theta}(E(u(t_{n}))-E(\psi))^{\theta}.
	\end{equation}
	Then by \eqref{eqwen16.8} and \eqref{eqwen17.1}, we obtain
	\begin{align*}
		\left\|u(\bar{t}_{n})-\psi \right\|_{L^{2}}&\le \int_{t_{n}}^{\bar{t}_{n}}\left\|u_{t} \right\|_{L^{2}}d\tau+\left\|u(t_{n})-\psi \right\|_{L^{2}}\\
		&\le \frac{1}{\theta}(E(u(t_{n}))-E(\psi))^{\theta}+\left\|u(t_{n})-\psi \right\|_{L^{2}}\\
		&<\varepsilon.
	\end{align*}
	This indicates that when $n\to +\infty$,
	\begin{equation*}
		u(\bar{t}_{n})\to \psi\,\,\textup{in }L^{2}(\Omega).
	\end{equation*}
	By the relative compactness of the orbi in $X(\kappa)$, there is a subsequence of $u(\bar{t}_{n})$, still denoted by itself, such that
	\begin{equation*}
		u(\bar{t}_{n})\to \psi\,\,\textup{in }X(\kappa).
	\end{equation*}
	Thus, there is $N'\ge N$ such that when $n\ge N'$,
	\begin{equation*}
		\left\|u(\bar{t}_{n},\cdot)-\psi \right\|_{H_{D}^{4}}<\frac{\varepsilon}{2}<\frac{\sigma}{2}.
	\end{equation*}
	which contradicts the definition of $\bar{t}_{n}$. Thus, there is $N_{0}\ge N$ such that $\bar{t}_{n_{0}}=\infty$. Since $\frac{d}{dt}E\le 0$, we have
	\begin{equation}\label{eqwen17.5}
		E(u(t))\le E(u_{0})=\frac{1}{2}\left\|u_{0} \right\|_{H_{D}^{2}}^{2}-\int_{\Omega}\frac{\lambda}{1+u_{0}}<\infty.
	\end{equation}
	Combining \eqref{eqwen17.1} and \eqref{eqwen17.5}, we obtain that
	\begin{equation}\label{eqwen17.6}
		\int_{t_{n_{0}}}^{+\infty}\left\|u_{t} \right\|_{L^{2}}d\tau\le \frac{1}{\theta}(E(u(t_{n_{0}}))-E(\psi))^{\theta}<\infty.
	\end{equation}
	Then for any $t\ge t_{n_{n}}$,
	\begin{equation}\label{eqwen17.2}
		\left\|u(t)-\psi \right\|_{L^{2}}\le \int_{t}^{+\infty}\left\|u_{t} \right\|_{L^{2}}d\tau.
	\end{equation}
	Therefore, by \eqref{eqwen17.6} and \eqref{eqwen17.2}, we obtain the convergence of $u$ in $L^{2}(\Omega)$. Furthermore, by the relative compactness of the orbit, we obtain the desired conclusion \eqref{eq1.10}. Theorem~\ref{shoulianxingpaowu} is proved.
\end{proof}

Next we prove the convergence rate established in \eqref{eq1.11}. Before proving \eqref{eq1.11}, we need to prove the convergence rate in $L^{2}(\Omega)$, as stated in the following theorem.
\begin{lemma}\label{L2shoulian}
	There are constants $C_{1}>0$, $\gamma_{1}>0$ and $T_{1}>0$, such that when $t\ge T_{0}$,
	\begin{equation}\label{L2shouliansulv}
		\left\|u(t,\cdot)-\psi \right\|_{L^{2}}\le C_{1}(1+t)^{-\gamma_{1}}.
	\end{equation}
\end{lemma}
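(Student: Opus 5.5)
The plan is the standard Lojasiewicz--Simon decay argument, applied to $H(t)=E(u(t))-E(\psi)$. By Theorem~\ref{shoulianxingpaowu}, $u(t)\to\psi$ in $H_{D}^{4}$. Hence there is $T_{1}$ such that $\|u(t)-\psi\|_{H_{D}^{4}}<\sigma$ for all $t\ge T_{1}$, where $\sigma,\theta$ are as in Lemma~\ref{LSbudengshipaowupaowu}. Since $E$ is nonincreasing and $E(u(t))\to E(\psi)$ by continuity of $E$ (Lemma~\ref{newnlfh}), we have $H(t)\ge 0$. If $H(t_{*})=0$ for some $t_{*}$, then $u_{t}\equiv0$ afterwards by \eqref{eqnew30.1}, so $u(t)=\psi$ for $t\ge t_{*}$ and the estimate is trivial. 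We may therefore assume $H>0$.

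\textbf{Step 1: decay of $H$.} For $t\ge T_{1}$ the equation gives $u_{t}=-M(u-\psi)$. Combining this with the dissipation identity and Lemma~\ref{LSbudengshipaowupaowu},
\begin{equation*}
H'(t)=-\|u_{t}\|_{L^{2}}^{2}=-\|M(u-\psi)\|_{L^{2}}^{2}\le -H(t)^{2(1-\theta)}.
\end{equation*}
Since $2(1-\theta)>1$, integrating $\frac{d}{dt}H^{-(1-2\theta)}\ge 1-2\theta$ yields
\begin{equation*}
H(t)\le C(1+t)^{-1/(1-2\theta)},\qquad t\ge T_{1}.
\end{equation*}

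\textbf{Step 2: from $H$ to $\|u-\psi\|_{L^{2}}$.} Inequality \eqref{eqwen16.9}, namely $-\frac{d}{dt}H^{\theta}\ge\theta\|u_{t}\|_{L^{2}}$, holds on $[T_{1},\infty)$. Integrating it from $t$ to $\infty$ and using $H(\infty)=0$,
\begin{equation*}
\|u(t)-\psi\|_{L^{2}}\le\int_{t}^{\infty}\|u_{t}\|_{L^{2}}\,d\tau\le\frac{1}{\theta}H(t)^{\theta}\le C(1+t)^{-\theta/(1-2\theta)}.
\end{equation*}
This gives \eqref{L2shouliansulv} with $\gamma_{1}=\theta/(1-2\theta)$, after enlarging $C_{1}$ to cover any initial segment if needed. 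The statement's $T_{0}$ is to be read as $T_{1}$.

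\textbf{Main obstacle.} The delicate point is ensuring that the Lojasiewicz--Simon inequality is applicable for all $t\ge T_{1}$, i.e.\ that the trajectory stays in the $\sigma$-ball in the $H_{D}^{4}$ topology rather than in the $L^{2}$ topology. This is supplied by the full $H_{D}^{4}$ convergence in Theorem~\ref{shoulianxingpaowu}, which rests on orbit precompactness (Lemma~\ref{eqnewlemma35.1}). A secondary point is that Lemma~\ref{LSbudengshipaowupaowu} may have been obtained with a modified exponent $\theta'$. We simply use whichever exponent in $(0,\frac12)$ the lemma provides; if needed, we shrink $\theta$, which is harmless since $|H|<1$ near $\psi$.
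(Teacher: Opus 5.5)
Your proof is correct and is precisely the standard Lojasiewicz--Simon decay argument that the paper invokes, without details, by pointing to Corollary~6.3.3 of \cite{ref3}: the inequality $H'\le -H^{2(1-\theta)}$ gives $H(t)\le C(1+t)^{-1/(1-2\theta)}$, and integrating $-\frac{d}{dt}H^{\theta}\ge\theta\|u_t\|_{L^2}$ over $[t,\infty)$ yields the rate $\gamma_1=\theta/(1-2\theta)$. Your use of the $H_D^4$ convergence from Theorem~\ref{shoulianxingpaowu}, which keeps the trajectory in the $\sigma$-neighborhood for all $t\ge T_1$, and your treatment of the degenerate case $H(t_*)=0$ correctly supply the points the paper leaves implicit.
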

\begin{proof}
	The idea of proving  Corollary~6.3.3 in \cite{ref3} can be easily adopted to prove the inequality~\ref{L2shouliansulv}
	in our case, and hence we omit the details of the proof.
\end{proof}

Based on Lemma~\ref{L2shoulian}, we are now ready to obtain the convergence rate in $H_{D}^{4}(\Omega)$.
\begin{theorem}\label{shoulianH4}
	there are constants $C_{2}>0$, $\gamma_{2}>0$ and $T_{2}>0$, such that when $t\ge T_{0}$,
	\begin{equation}\label{H4shoulian}
		\left\|u(t,\cdot)-\psi \right\|_{H_{D}^{4}}\le C_{2}(1+t)^{-\gamma_{2}}.
	\end{equation}
\end{theorem}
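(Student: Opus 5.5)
The plan is to upgrade the $L^{2}$ rate of Lemma~\ref{L2shoulian} to an $H_{D}^{4}$ rate by interpolation, using the uniform $H^{5}$ bound obtained in the proof of Lemma~\ref{eqnewlemma35.1}. Fix $\delta>0$. By \eqref{eqnew35.25} and the weak lower semicontinuity argument that follows it, $\|u(t)\|_{H^{5}}\le C_{10}$ for all $t\ge\delta$.

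Since $u(t_{n})\to\psi$ in $H_{D}^{4}$ and $H^{5}$-bounded sets are weakly compact, the same lower semicontinuity argument gives $\|\psi\|_{H^{5}}\le C_{10}$. Alternatively, this follows from elliptic regularity for $A\psi=f(\psi)$, since $f(\psi)\in H^{1}$. Hence $\|u(t)-\psi\|_{H^{5}}\le 2C_{10}$ for $t\ge\delta$.

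Next I would apply the Gagliardo--Nirenberg / Sobolev interpolation inequality on the smooth bounded domain $\Omega$:
\begin{equation*}
\|w\|_{H^{4}}\le C\|w\|_{H^{5}}^{4/5}\|w\|_{L^{2}}^{1/5},\qquad w\in H^{5}(\Omega).
\end{equation*}
This holds because $H^{4}=[L^{2},H^{5}]_{4/5}$ for smooth bounded domains, via an extension operator. Taking $w=u(t)-\psi$ and combining with \eqref{L2shouliansulv}, for $t\ge\max\{T_{0},\delta\}$ we get
\begin{equation*}
\|u(t)-\psi\|_{H_{D}^{4}}\le C(2C_{10})^{4/5}C_{1}^{1/5}(1+t)^{-\gamma_{1}/5}.
\end{equation*}
This is the claimed estimate with $\gamma_{2}=\gamma_{1}/5$, $C_{2}=C(2C_{10})^{4/5}C_{1}^{1/5}$ and $T_{2}=\max\{T_{0},\delta\}$.

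The main obstacle is justifying a bound in a space strictly stronger than $H^{4}$ that is uniform in time, both for $u(t)$ and for $\psi$. The $H^{5}$ estimate \eqref{eqnew35.25} was derived for the approximations $u^{(n)}$ and passed to the limit only pointwise in $t$. I would check that its constant $C_{10}$ depends only on $\delta,\lambda,\kappa,\Omega,B,T,u_{0}$ and not on $t$ or $n$; this holds by \eqref{eqnew35.1}, \eqref{eqnew35.2} and the bound $\tilde C_{\delta}$.

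If the $H^{5}$ route proved delicate, I would fall back on a direct argument. Write $A(u-\psi)=f(u)-f(\psi)-u_{t}$ and use elliptic $H^{4}$ estimates to get
\begin{equation*}
\|u-\psi\|_{H^{4}}\le C\bigl(\|u-\psi\|_{L^{2}}+\|u_{t}\|_{L^{2}}\bigr),
\end{equation*}
since $f$ is Lipschitz on $[-1+\kappa,\infty)$. I would then need a decay rate for $\|u_{t}\|_{L^{2}}$. That could be obtained by interpolating $\int_{t}^{\infty}\|u_{t}\|_{L^{2}}$ bounds with the uniform bound on $\|u_{tt}\|$ in the differentiated equation \eqref{eqnew35.11}. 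This route is more involved, so I would pursue the interpolation argument first.
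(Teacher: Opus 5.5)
Your argument is correct, but it follows a different route from the paper's proof. The paper never uses a bound above $H^{4}$. It sets $w=u-\psi$ and derives three energy inequalities: testing $w_t+Aw=Fw$ with $w$, testing it with $w_t$, and testing the time-differentiated equation with $w_t$. The last of these requires assuming $u_0\in D(A^{2})$. With $p=\|w\|_{L^2}^2+\|w_t\|_{L^2}^2+c_0\|w\|_{H_D^2}^2$, these combine into $p'+\alpha p\le C\|w\|_{L^2}^2\le C(1+t)^{-2\gamma_1}$, which gives $\|w_t\|_{L^2}\lesssim(1+t)^{-\gamma_1}$. The paper then closes with the elliptic $H^{4}$ estimate for $Aw=Fw-w_t$. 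This is essentially your fallback, except that the decay of $\|u_t\|_{L^2}$ comes from the differential inequality for $p$. It does not come from a pointwise bound on $u_{tt}$, which the paper's estimates only provide in a weighted, time-integrated form. The paper finishes with a density argument in $D(A)$ to remove the assumption $u_0\in D(A^2)$. The paper's route keeps the exponent ($\gamma_2=\gamma_1$) and works entirely at the $H^{4}$ level. Your interpolation $\|w\|_{H^4}\le C\|w\|_{H^5}^{4/5}\|w\|_{L^2}^{1/5}$ is only a few lines once you grant the uniform $H^{5}$ bound from the proof of Lemma~\ref{eqnewlemma35.1}. It costs a factor $5$ in the exponent, which is harmless because the statement only asks for some $\gamma_2>0$. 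It also has a real advantage: both inputs, the $L^2$ rate and the $H^5$ bound, are already established for $u$ itself. You therefore avoid the paper's final approximation step, which is delicate because the rate constants of the approximating solutions, and the equilibria they converge to, need not be uniform in $n$.
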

\begin{proof}
	Let $w=u-\psi$, then $w$ satisfies
	\begin{equation}\label{eq3.16.1}
		\left.\left\{\begin{array}{ll} w_{t}+B\Delta^2w-T\Delta w=Fw,\quad &t>0, \ x\in\Omega ,\\
			w=\partial_\nu w=0,& x\in\partial\Omega ,\\
			w(0,\cdot)=u_{0}-\psi,& x\in \Omega,\end{array}\right.\right.
	\end{equation}
	where
	\begin{equation*}
		F=\frac{\lambda(2+u+\psi)}{(1+u)^{2}(1+\psi)^{2}}.
	\end{equation*}
	Mutiplying the equation in \eqref{eq3.16.1} by $w$, then integrating over $\Omega$ yields
	\begin{equation}\label{eqnew3.16.6}
		\frac{1}{2}\frac{d}{dt}\left\|w \right\|_{L^{2}}^{2}+\left\|w \right\|_{H_{D}^{2}}^{2}=\int_{\Omega}Fw^{2}dx.
	\end{equation}
	Recall that in Section~\ref{paowuLSbudengshi}, $\left\|\psi \right\|_{L^{\infty}}<1-\frac{\kappa}{2}$, then we get
	\begin{equation}\label{eq3.16.5}
		|F|\le \frac{\lambda(2+1-\kappa+1-\kappa/2)}{(\kappa)^{2}(\kappa/2)^{2}}:=\frac{c_{1}}{2}.
	\end{equation}
	Combining \eqref{eqnew3.16.6} with \eqref{eq3.16.5}, we obtain
	\begin{equation}\label{eqnew3.16.8}
		\frac{d}{dt}\left\|w \right\|_{L^{2}}^{2}+2\left\|w \right\|_{H_{D}^{2}}^{2}\le c_{1}\left\|w \right\|_{L^{2}}^{2}.
	\end{equation}
	Multiplying the equation in \eqref{eq3.16.1} by $w_{t}$, then integrating over $\Omega$ to get
	\begin{equation}\label{eqnew3.16.9}
		\frac{1}{2}\frac{d}{dt}\left\|w \right\|_{H_{D}^{2}}^{2}+\left\|w_{t} \right\|_{L^{2}}^{2}=\int_{\Omega}Fww_{t}dx.
	\end{equation}
	Note that
	\begin{align}
		\int_{\Omega}Fww_{t}dx&\le\frac{c_{1}}{2}\int_{\Omega}|w||w_{t}|dx\notag\\
		&\le \frac{c_{1}}{2}\int_{\Omega}\left(\frac{1}{c_{1}}|w_{t}|^{2}+\frac{c_{1}}{4}|w|^{2}\right)dx\notag\\
		&=\frac{1}{2}\left\|w_{t} \right\|_{L^{2}}^{2}+\frac{c_{1}^{2}}{8}\left\|w \right\|_{L^{2}}^{2}.\label{eqnew3.17.1}
	\end{align}
	Then by \eqref{eqnew3.16.9} and \eqref{eqnew3.17.1}, we obtain
	\begin{equation}\label{eqnew3.17.2}
		\frac{d}{dt}\left\|w \right\|_{H_{D}^{2}}^{2}+\left\|w_{t} \right\|_{L^{2}}^{2}\le \frac{c_{1}^{2}}{4}\left\|w \right\|_{L^{2}}^{2}.
	\end{equation}
	We use density argument. For convenience, we assume that $u_{0}\in D(A^{2})\cap X(\kappa)$, by the proof of Lemma~\ref{eqnewlemma35.1}, the global solution $u$ satisfies
	\begin{equation*}
		u\in C^{1}([0,+\infty),H^{4}(\Omega))\cap C^{2}([0,+\infty),L^{2}(\Omega)).
	\end{equation*}
	 Differentiate the equation in \eqref{eq3.16.1} with respect to $t$, multiply the resultant by $w_{t}$, and integrate over $\Omega$, we get
	\begin{align*}
		\frac{1}{2}\frac{d}{dt}\left\|w_{t} \right\|_{L^{2}}^{2}+\left\|w_{t} \right\|_{H_{D}^{2}}^{2}&=\int_{\Omega}\frac{2\lambda}{(1+u)^{3}}w_{t}^{2}dx\\
		&\le 2\lambda\kappa^{-3}\left\|w_{t} \right\|_{L^{2}}^{2}.
	\end{align*}
	This means
	\begin{equation}\label{eq3.17.5}
		\frac{d}{dt}\left\|w_{t} \right\|_{L^{2}}^{2}\le c_{2}\left\|w_{t} \right\|_{L^{2}}^{2},
	\end{equation}
	where $c_{2}=4\lambda \kappa^{-3}$. we can choose a constant $c_{0}\in (0,+\infty)$ such that $c_{0}\left\|w_{t} \right\|_{L^{2}}^{2}>c_{2}\left\|w_{t} \right\|_{L^{2}}^{2}$. Multiplying Inequality \eqref{eqnew3.17.2} by $c_{0}$ yields
	\begin{equation}\label{eq7.12}
		\frac{d}{dt}\left(c_{0}\left\|w \right\|_{H_{D}^{2}}^{2}\right)+c_{0}\left\|w_{t} \right\|_{L^{2}}^{2}\le \frac{c_{0}c_{3}}{4}\left\|w \right\|_{L^{2}}^{2}:=c_{4}\left\|w \right\|_{L^{2}}^{2}.
	\end{equation}
	将 \eqref{eqnew3.16.8}, \eqref{eq3.17.5} 与 \eqref{eq7.12} 相加, 我们得到
	\begin{align}
		&\frac{d}{dt}\left(\left\|w \right\|_{L^{2}}^{2}+\left\|w_{t} \right\|_{L^{2}}^{2}+c_{0}\left\|w \right\|_{H_{D}^{2}}^{2}\right)+c_{0}\left\|w_{t} \right\|_{L^{2}}^{2}+2\left\|w \right\|_{H_{D}^{2}}^{2}\notag\\
		&\le (c_{2}+c_{5})\left\|w \right\|_{L^{2}}^{2}+c_{3}\left\|w_{t} \right\|_{L^{2}}^{2}.\label{eq7.13}
	\end{align}
	Since $c_{0}>c_{2}$, adding $\left\|w \right\|_{L^{2}}^{2}$ on both sides of \eqref{eq7.13} to get
	\begin{align}
		&\frac{d}{dt}\left(\left\|w \right\|_{L^{2}}^{2}+\left\|w_{t} \right\|_{L^{2}}^{2}+c_{0}\left\|w \right\|_{H_{D}^{2}}^{2}\right)+\left\|w \right\|_{L^{2}}^{2}+(c_{0}-c_{3})\left\|w_{t} \right\|_{L^{2}}^{2}+2\left\|w \right\|_{H_{D}^{2}}^{2}\notag\\&\le (c_{2}+c_{5}+1)\left\|w \right\|_{L^{2}}^{2}.\label{eq7.14}
	\end{align}
	Let $p(t)=\left\|w \right\|_{L^{2}}^{2}+\left\|w_{t} \right\|_{L^{2}}^{2}+c_{0}\left\|w \right\|_{H_{D}^{2}}^{2}$. Then there is a constant $\alpha>0$ such that
	\begin{equation}\label{eq7.15}
		\alpha \left(\left\|w \right\|_{L^{2}}^{2}+\left\|w_{t} \right\|_{L^{2}}^{2}+c_{0}\left\|w \right\|_{H_{D}^{2}}^{2}\right)\le \left\|w \right\|_{L^{2}}^{2}+(c_{0}-c_{2})\left\|w_{t} \right\|_{L^{2}}^{2}+2\left\|w \right\|_{H_{D}^{2}}^{2}.
	\end{equation}
	By \eqref{L2shouliansulv}, \eqref{eq7.14} and \eqref{eq7.15}, we obtain that when $t\ge T_{1}$,
	\begin{equation*}
		\frac{d}{dt}p(t)+\alpha p(t)\le c_{5}(1+t)^{-2\gamma_{1}},
	\end{equation*}
	where $\gamma_{1}$ is as Lemma~\ref{L2shoulian}. A direct calculation yields that when $t\ge 3T_{1}$,
	\begin{align*}
		e^{\alpha t}p(t)-e^{\alpha T_{1}}p(T_{1})&\le\int_{T_{1}}^{t}c_{5}e^{\alpha \tau}(1+\tau)^{-2\gamma_{1}}d\tau\\
		&\le c_{5}\int_{T_{1}}^{\frac{t}{2}}e^{\alpha \tau}(1+\tau)^{-2\gamma_{1}}d\tau+c_{5}\int_{\frac{t}{2}}^{t}e^{\alpha \tau}(1+\tau)^{-2\gamma_{1}}d\tau\\
		&\le c_{5}(1+T_{1})^{-2\gamma_{1}}(\frac{t}{2}-T_{1})e^{\frac{\alpha t}{2}}+c_{5}(1+\frac{t}{2})^{-2\gamma_{1}}\int_{\frac{t}{2}}^{t}e^{\alpha \tau}d\tau\\
		&= c_{5}(1+T_{1})^{-2\gamma_{1}}(\frac{t}{2}-T_{1})e^{\frac{\alpha t}{2}}+\frac{c_{5}}{\alpha}(1+\frac{t}{2})^{-2\gamma_{1}}(e^{\alpha t}-e^{\alpha \frac{t}{2}})\\
		&\le c_{5}(1+T_{1})^{-2\gamma_{1}}(\frac{t}{2}-T_{1})e^{\frac{\alpha t}{2}}+\frac{c_{5}}{\alpha}(1+\frac{t}{2})^{-2\gamma_{1}}e^{\alpha t}.
	\end{align*}
	This indicates
	\begin{equation*}
		p(t)\le c_{6}e^{-\alpha t}+c_{7}e^{-\frac{\alpha t}{2}}t+c_{8}(1+\frac{t}{2})^{-2\gamma_{1}},\quad t\ge 3T_{1}.
	\end{equation*}
	Note that
	\begin{gather*}
		\lim_{t \to +\infty}\frac{e^{-\alpha t}}{(1+t)^{-2\gamma_{1}}}=0,\\
		\lim_{t \to +\infty}\frac{e^{-\frac{\alpha}{2} t}}{(1+t)^{-2\gamma_{1}}}=0,\\
		0<\lim_{t \to +\infty}\frac{(1+\frac{t}{2})^{-2\gamma_{1}}}{(1+t)^{-2\gamma_{1}}}<\infty,
	\end{gather*}
	Thus, there is $c_{9}>0$ such that
	\begin{equation*}
		p(t)\le c_{9}^{2}(1+t)^{-2\gamma_{1}},\quad t\ge 3T_{1}.
	\end{equation*}
	Therefore, $\left\|w_{t} \right\|_{L^{2}}\le c_{9}(1+t)^{-\gamma_{1}}$. For problem \eqref{eq3.16.1}, by the priori estimate of ellptic equation, there are constants $c_{10},c_{11}>0$ independent of $w$ such that when $t\ge 3T_{1}$,
	\begin{align}
		\left\|w \right\|_{H_{D}^{4}}&\le c_{10}\left(\left\|w_{t} \right\|_{L^{2}}+\left\|F w \right\|_{L^{2}}\right)+c_{11}\left\|w \right\|_{L^{2}}\notag\\
		&\le c_{10}\left(\left\|w_{t} \right\|_{L^{2}}+\frac{c_{1}}{2}\left\|w \right\|_{L^{2}}\right)+c_{11}\left\|w \right\|_{L^{2}}\notag\\
		&\le c_{12}(1+t)^{-\gamma_{1}}.\label{eq7.77}
	\end{align}
	Since $D(A^{2})$ is dense in $D(A)$, there is a sequence $u_{0n}\in D(A^{2})$ such that $u_{0n}\to u_{0}$ in $D(A)=H_{D}^{4}(\Omega)$. Similar to the proof of Lemma~\ref{eqnewlemma35.1}, we obtain that when $u_{0}\in X(\kappa)$, \eqref{H4shoulian} still holds. Theorem~\ref{theorem1.2} is completely proved.
\end{proof}


\section{Hyperbolic Problem}\label{dbjtjxdsqfc}
	In this section, we will prove Theorem~\ref{theorem1.3}. First we show that \eqref{neweq1.2} defines a gradient system, then we prove the corresponding Lojasiewicz-Simon inequality. Based on the two results, we will prove the global solution to \eqref{neweq1.2} must converge to a stationary solution. In addition, we establish the convergence rate.
	
		\subsection{Energy Dissipation Identity}\label{nlhds}
	In this subsection, we will prove that \eqref{neweq1.2} defines a gradient system. First, we establish the energy dissipation identity, then we show that the energy is a Lyapunov function. Finally we prove the orbit starting from $(u_{0},u_{1})$ is relatively compact in $Z(\kappa)$. By the definition~\ref{predef2.8}, we prove the existence of the gradient system.
	
	Recall that $-A=-(B\Delta^{2}-T\Delta)$,
	\begin{equation*}
		\mathbb{A}:=\begin{pmatrix}
			0 & -1\\
			A & 1
		\end{pmatrix}.
	\end{equation*}
	By \cite[Section~3.1]{ref2}, $\mathbb{A}$ generates a $C_{0}$-semigroup $T(t)$ on a Hilbert space $\mathbb{H}:=H_{D}^{2}(\Omega)\times L^{2}(\Omega)$ with the domain $D(\mathbb{A}):=H_{D}^{4}(\Omega)\times H_{D}^{2}(\Omega)$. Recall that
	\begin{gather*}
		\mathbf{u}=(u,u_{t})^{T},\quad \mathbf{u}_{0}=(u_{0},u_{1})^{T},\quad V=H_{D}^{2}(\Omega),
	\end{gather*}
	\begin{equation*}
		g(\mathbf{u})=(0,f(u))^{T},\quad \text{其中} \ f(u):=-1/(1+u)^{2}.
	\end{equation*}
	Define the norm in $\mathbb{H}$:
	\begin{equation*}
		\left\|(v_{1},v_{2}) \right\|_{\mathbb{H}}=\left(\left\|v_{1} \right\|_{H_{D}^{2}}^{2}+\left\|v_{2} \right\|_{L^{2}}^{2} \right)^{\frac{1}{2}}.
	\end{equation*}
	Then under the assumptions of Theorem~\ref{theorem1.3}, the global solution $\mathbf{u}$ satisfies
	\begin{gather*}
		\left\|\mathbf{u}(t) \right\|_{\mathbb{H}}^{2}\le \frac{(1-\kappa)^{2}}{C_{0}^{2}},\quad \forall t\ge 0.
	\end{gather*}
	Since $H^{2}(\Omega)\hookrightarrow L^{\infty}(\Omega)$, for any $t\ge 0$,
	\begin{gather*}
		\left\|u_{t} \right\|_{L^{2}}\le \left\|\mathbf{u}(t) \right\|_{\mathbb{H}}\le \frac{1-\kappa}{C_{0}},\\
		\left\|u(t) \right\|_{L^{\infty}}\le C_{0}\left\|u(t) \right\|_{H_{D}^{2}}\le C_{0}\left\|\mathbf{u}(t) \right\|_{\mathbb{H}}\le 1-\kappa.
	\end{gather*}
	Then by Theorem~\ref{newtheorem2.2}, solution $\mathbf{u}$ to \eqref{neweq1.2} is the mild solution. Then by the definition of mild soution, $\mathbf{u}$ satisfies the following integral equation:
	\begin{equation*}
		u(t)=T(t)\mathbf{u}_{0}+\int_{0}^{t}T(t-\tau)g(\mathbf{u}(\tau))d\tau.
	\end{equation*}
	To get energy functional, it suffices to prove the following lemma.
	\begin{lemma}\label{newnewlemma 11111}
		Under the assumption of Theorem~\ref{theorem1.3}, the following identity holds for all $t>0$:
		\begin{equation}\label{eqnewnewnewnew4.1}
			\frac{d}{dt}\int_{\Omega}\left(\frac{1}{2}\left|u_{t} \right|^{2}+\frac{1}{2}B\left|\Delta u \right|^{2}+\frac{1}{2}T\left|\nabla u \right|^{2}-\frac{\lambda}{1+u}\right)dx+\left\|u_{t} \right\|_{L^{2}}^{2}=0.
		\end{equation}
				In addition, $Q(t)$ defined by
		\begin{equation*}
			\mathbf{u}(t)=Q(t)\mathbf{u}_{0}
		\end{equation*}
	is a nonlinear $C_{0}$-semigroup in $Z(\kappa)$.
	\end{lemma}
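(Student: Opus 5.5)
The energy identity \eqref{eqnewnewnewnew4.1} is formally what we get by multiplying the equation in \eqref{neweq1.2} by $u_{t}$ and integrating over $\Omega$. The only issue is regularity: a mild solution only has $u\in C([0,\tau_m),H^{2})\cap C^{1}([0,\tau_m),L^{2})$, so $u_{tt}$ and $\Delta^{2}u$ need not lie in $L^{2}$. We therefore argue by density. First take smooth data $(u_{0},u_{1})\in (H_{D}^{4}(\Omega)\times H_{D}^{2}(\Omega))\cap S(\kappa)$. By Theorem~\ref{newtheorem18.2} the solution is then classical, with $u\in C^{1}([0,\tau_m),H^{4})\cap C^{1}([0,\tau_m),H^{2})\cap C^{2}([0,\tau_m),L^{2})$. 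For such $u$ we multiply by $u_{t}\in H_{D}^{2}$ and integrate by parts exactly as in the computation following \eqref{eqnew111}. The boundary terms vanish because $u_{t}=\partial_{\nu}u_{t}=0$ on $\partial\Omega$, which follows from the argument given in the proof of Lemma~\ref{eqnewlemma35.1}. Using $u_{tt}u_{t}=\tfrac12\partial_t|u_t|^2$ and $\lambda(1+u)^{-2}u_{t}=-\partial_t\big(\lambda/(1+u)\big)$, which is legitimate because $1+u\ge\kappa$ by the $L^\infty$ bound, we obtain \eqref{eqnewnewnewnew4.1} pointwise in $t$. Integrating in time gives the integrated form
\begin{equation*}
\mathcal{E}(t)+\int_{s}^{t}\|u_{t}\|_{L^{2}}^{2}\,d\tau=\mathcal{E}(s),\qquad \mathcal{E}(t):=\int_{\Omega}\Big(\tfrac12|u_t|^2+\tfrac12 B|\Delta u|^2+\tfrac12 T|\nabla u|^2-\tfrac{\lambda}{1+u}\Big)dx .
\end{equation*}

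Next we pass to general data in $Z(\kappa)$. Since $D(\mathbb{A})$ is dense in $\mathbb{H}$, we choose $\mathbf{u}_{0}^{(n)}\in D(\mathbb{A})$ with $\mathbf{u}_{0}^{(n)}\to\mathbf{u}_{0}$ in $\mathbb{H}$. For large $n$ these approximants lie in the slightly enlarged set $Z(\kappa/2)$, by the same $\varepsilon_{0}$-argument used in the proof of Lemma~\ref{eqnewlemma35.1}. We then need continuous dependence: on a fixed interval $[0,T_{*}]$, $\mathbf{u}^{(n)}\to\mathbf{u}$ in $C([0,T_{*}],\mathbb{H})$. To get this, subtract the mild formulations. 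On the set where $u,u^{(n)}\ge -1+\kappa/4$ the map $g$ is Lipschitz on $\mathbb{H}$, with constant controlled by $\kappa^{-3}$ and $C_{0}$, because $H^2\hookrightarrow L^\infty$. Combined with $\|T(t)\|\le Me^{\omega t}$, Gronwall's inequality gives the convergence.

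The main obstacle is to guarantee that the approximate solutions stay away from $-1$ on $[0,T_{*}]$ and exist there. We would handle this with a continuity/bootstrap argument: let $t^{*}_n$ be the first time $\|u^{(n)}-u\|_{L^\infty}$ reaches $\kappa/2$. On $[0,t^{*}_n)$ we have $1+u^{(n)}\ge\kappa/2$, so the Lipschitz estimate and Gronwall bound make the difference small, uniformly in $n$ large. This forces $t^{*}_n>T_{*}$. The blow-up alternative of Theorem~\ref{newtheorem2.2} then also excludes early termination. With uniform convergence in $\mathbb{H}$ and uniform convergence of $\lambda/(1+u^{(n)})$ in $L^{1}$ established, the integrated identity passes to the limit. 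Since the limit identity is an integrated identity whose right side is absolutely continuous, it yields \eqref{eqnewnewnewnew4.1} for a.e. $t$. Because $\|u_t\|_{L^2}^2$ is continuous in $t$, the identity then holds for every $t>0$.

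Finally, we prove the semigroup property of $Q(t)$ on $Z(\kappa)$. Invariance, i.e.\ $Q(t)Z(\kappa)\subset Z(\kappa)$, is exactly the hypothesis of Theorem~\ref{theorem1.3}. The relation $Q(0)=I$ is immediate. The property $Q(t+s)=Q(t)Q(s)$ follows from uniqueness of maximal mild solutions in Theorem~\ref{newtheorem2.2}, since time-translates of solutions are again solutions. Continuity of $t\mapsto Q(t)\mathbf{u}_{0}$ in $\mathbb{H}$ is part of the regularity in Theorem~\ref{newtheorem2.2}(i). Continuity of $\mathbf{u}_{0}\mapsto Q(t)\mathbf{u}_{0}$, locally uniformly in $t$, is the continuous-dependence estimate just established.
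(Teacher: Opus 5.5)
Your proof is correct, but its key step differs from the paper's.

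\textbf{How the paper makes the approximants global.} The paper applies the energy identity to each classical approximant $u^{(n)}$. It then uses the smallness condition $\lambda<\kappa^{2}(8-3\kappa)/(128C_{0}^{2}|\Omega|)$, through the equivalence \eqref{neweq10.3}, to get the uniform bound $\|\mathbf{u}^{(n)}(t)\|_{\mathbb{H}}\le(1-\kappa/4)/C_{0}$ for all $t\ge0$. This gives $\tau_m^{(n)}=\infty$ and $u^{(n)}\ge-1+\kappa/4$. Next it runs an energy estimate on the difference $u^{(n)}-u^{(m)}$ of two approximants to show that $(\mathbf{u}^{(n)})$ is Cauchy in $C([0,T_1],\mathbb{H})$. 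Finally it identifies the limit with $\mathbf{u}$ via the Duhamel formula and uniqueness.

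\textbf{How your route differs.} You compare each $\mathbf{u}^{(n)}$ directly with $\mathbf{u}$ in the Duhamel formula. You close a bootstrap on $\|u^{(n)}-u\|_{L^\infty}<\kappa/2$ and use the blow-up alternative to rule out early termination.

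\textbf{What each route buys.} Your route uses the standing hypothesis $1+u\ge\kappa$ instead of the smallness of $\lambda$. It gives existence of $u^{(n)}$ only on $[0,T_*]$ for $n\ge n(T_*)$, which is all the energy identity needs. It also proves continuity of the data-to-solution map, which the paper's ``by uniqueness'' remark about $Q(t)$ leaves unaddressed. The paper's route instead gives global, $n$-uniform bounds on the approximants, and it recovers the lower bound on the limit ($\tilde u\ge -1+\kappa/8$) from them rather than assuming it.

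\textbf{A caveat common to both proofs.} Theorem~\ref{theorem1.3} only assumes that the \emph{given} orbit stays in $Z(\kappa)$. That is not invariance of $Z(\kappa)$ under $Q(t)$, so your sentence calling invariance ``exactly the hypothesis'' overstates it. The semigroup should really be understood on the set of data whose orbits remain in $Z(\kappa)$, and your continuous-dependence argument applies there verbatim.

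\textbf{A minor point.} Theorem~\ref{newtheorem18.2} gives $u\in C([0,\tau_m),H^{4})$, not $C^{1}$. This is harmless, since continuity in $H^4$ is all you actually use.
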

	\begin{proof}
		We still use the density argument. Since $\mathbb{A}$ generates a $C_{0}$-semigroup on $\mathbb{H}$, by Hille-Yosida Theorem (\cite[Theorem~1.3.1]{ref15}),
		\begin{equation*}
			\overline{D(\mathbb{A})}=\mathbb{H}.
		\end{equation*}
		Since $Z(\kappa)\subset \mathbb{H}$, when $\mathbf{u}_{0}\in D(\mathbb{A}) \cap Z(\kappa)$ 时, there is a sequence $\mathbf{u}_{0}^{(n)}=(u_{0}^{(n)},u_{1}^{(n)})\in D(\mathbb{A})$, such that $\mathbf{u}_{0}^{(n)} \to \mathbf{u}_{0}$ in $\mathbb{H}$. By the definition of strong convergence and the commutativity of limits and norms, we have
		\begin{equation*}
			\lim_{n\to +\infty}\big\| \mathbf{u}_{0}^{(n)}\big\|_{\mathbb{H}}^{2}=\big\|\lim_{n\to +\infty}\mathbf{u}_{0}^{(n)} \big\|_{\mathbb{H}}^{2}=\big\|\mathbf{u}_{0} \big\|_{\mathbb{H}}^{2}.
		\end{equation*}
		By the definition of the limit of a sequence, for fixed $\varepsilon_{0}=\frac{\left(1-\kappa/2\right)^{2}}{C^{2}}-\frac{\left(1-\kappa\right)^{2}}{C^{2}}>0$, there is $N_{0}\in \mathbb{N}$ such that when $n\ge N_{0}$, $$		\left| \big\| \mathbf{u}_{0}^{(n)}\big\|_{\mathbb{H}}^{2}-\big\|\mathbf{u}_{0} \big\|_{\mathbb{H}}^{2}\right|\le \varepsilon_{0}.$$
		By triangle inequality,
		\begin{align*}
			\big\|\mathbf{u}_{0}^{(n)}\big\|_{\mathbb{H}}^{2}&\le 	\big\|\mathbf{u}_{0}\big\|_{\mathbb{H}}^{2}+\varepsilon_{0}\\
			&\le \frac{\left(1-\kappa\right)^{2}}{C_{0}^{2}}+\frac{\left(1-\kappa/2\right)^{2}}{C_{0}^{2}}-\frac{\left(1-\kappa\right)^{2}}{C_{0}^{2}}=\frac{\left(1-\kappa/2\right)^{2}}{C_{0}^{2}}.
		\end{align*}
		Since $H^{2}(\Omega)\hookrightarrow L^{\infty}(\Omega)$, we have
		\begin{equation*}
			\big\|u_{0}^{(n)} \big\|_{L^{\infty}}\le C_{0} \big\|u_{0}^{(n)} \big\|_{H_{D}^{2}}\le 	C_{0}\big\|\mathbf{u}_{0}^{(n)}\big\|_{\mathbb{H}}\le 1-\frac{\kappa}{2}.
		\end{equation*}
		This menas $u_{0}^{(n)}+1$ has a positive lower bound of $\frac{\kappa}{2}$. When $n \ge N_{0}$, consider the following problem
		\begin{equation}\label{neweq5.1}
			\left.\left\{\begin{array}{ll} v_{tt}+v_{t}+B\Delta^2v-T\Delta v=-\frac{\lambda}{(1+v)^2},\quad &t>0, \ x\in\Omega ,\\
				v=\partial_\nu v=0,& x\in\partial\Omega ,\\
				v(0,\cdot)=u_{0}^{(n)}, \ \ v_{t}(0,\cdot)=u_{1}^{(n)},& x\in \Omega,\end{array}\right.\right.
		\end{equation}
		Then by Theorem~\ref{newtheorem18.2}, there are $\tau_{m}^{(n)}>0$ and a unique solution to $u^{(n)}$ satisfying \eqref{neweq5.1} and
		\begin{equation*}
			u^{(n)}\in C([0,\tau_{m}^{(n)}),H^{4}(\Omega))\cap C^{1}([0,\tau_{m}^{(n)}),H^{2}(\Omega))\cap C^{2}([0,\tau_{m}^{(n)}),L^{2}(\Omega)).
		\end{equation*}
		For convenience, let
		$$\mathbf{u}^{(n)}=(u^{(n)},u_{t}^{(n)}).$$
		Then we show that when $n\ge N_{0}$, $\tau_{m}^{(n)}=\infty$. Since $	\big\|u_{0}^{(n)} \big\|_{L^{\infty}}\le 1-\frac{\kappa}{2}$, by the regularity of $u^{(n)}$,
		\begin{equation*}
			T_{0}^{(n)}:=\sup\left\{\tau\in (0,\tau_{m}^{(n)}) \ : \ u^{(n)}(t)\ge -1+\frac{\kappa}{4}, \ t\in[0,\tau)\right\}>0,
		\end{equation*}
		In addition, for $t\in [0,T_{0}^{(n)})$,  $1+u^{(n)}\ge \frac{\kappa}{4}$. Multiply the equation in \eqref{neweq5.1} by $u_{t}^{(n)}$, then integrate with respect to $x$ and $t$, we get for $\forall t \in(0,\tau_{m}^{(n)})$,
		\begin{align}
			&\frac{1}{2}\left(\big\|u_{t}^{(n)} \big\|_{L^{2}}^{2}+\big\|u^{(n)} \big\|_{H_{D}^{2}}^{2}-\int_{\Omega}\frac{2\lambda}{1+u^{(n)}}dx\right)+\int_{0}^{t}\big\|u_{t}^{(n)} \big\|_{L^{2}}^{2}\notag\\
			&=\frac{1}{2}\left(\big\|u_{1}^{(n)} \big\|_{L^{2}}^{2}+\big\|u_{0}^{(n)} \big\|_{H_{D}^{2}}^{2}-\int_{\Omega}\frac{2\lambda}{1+u_{0}^{(n)}}dx\right).\label{eqnewnew4.1}
		\end{align}
		By \eqref{eqnewnew4.1} and the definition of the norm in $\mathbb{H}$, we have
		\begin{align}
			\big\|\mathbf{u}^{(n)} \big\|_{\mathbb{H}}^{2}&\le 	\big\|\mathbf{u}_{0}^{(n)} \big\|_{\mathbb{H}}^{2}+2\lambda\int_{\Omega}\left(\frac{1}{1+u^{(n)}}-\frac{1}{1+u_{0}^{(n)}} \right)dx \notag\\
			&\le \frac{\left(1-\kappa/2\right)^{2}}{C_{0}^{2}}+8\lambda \kappa^{-1}|\Omega|. \label{neweq10.2}
		\end{align}
		A direct calculation yields that the inequality
		\begin{equation}\label{neweq10.3}
			\frac{\left(1-\kappa/2\right)^{2}}{C_{0}^{2}}+8\lambda \kappa^{-1}|\Omega| \le \frac{\left(1-\kappa/4\right)^{2}}{C_{0}^{2}}
		\end{equation}
		holds if and only if $$\lambda \le \frac{\kappa^{2}(8-3\kappa)}{128C_{0}^{2}|\Omega|}.$$
		This is consistent with the assumption in Theorem~\ref{theorem1.3}, at this time we get $T_{0}^{(n)}\ge \tau_{m}^{(n)}$. On the other hands, by the definition of $T_{0}^{(n)}$, $T_{0}^{(n)}\le \tau_{m}^{(n)}$. Thus $T_{0}^{(n)}=\tau_{m}^{(n)}$. By \eqref{neweq10.2} and \eqref{neweq10.3},
		\begin{equation*}
			\big\|u^{(n)} \big\|_{L^{\infty}}\le C_{0} \big\|u^{(n)} \big\|_{H_{D}^{2}}\le 	C_{0}\big\|\mathbf{u}^{(n)}\big\|_{\mathbb{H}}\le 1-\frac{\kappa}{4}.
		\end{equation*}
		Therefore,
		$$\liminf_{t\to \tau_{m}^{(n)}}\left(\min_{\Omega}u^{(n)}(t)\right)\ge -1+\frac{\kappa}{4}>-1.$$
		By Theorem~\ref{newtheorem2.2}(\romannumeral2), $\tau_{m}^{(n)}=\infty$. The solution to \eqref{neweq5.1} global exists and satisfying $u\ge -1+\frac{\kappa}{4}$.
		
		Let $$r=u^{(n)}-u^{(m)},\quad n,m\ge N_{0}.$$
		and
		$$\mathbf{r}_{0}=(u_{0}^{(n)}-u_{0}^{(m)},u_{1}^{(n)}-u_{1}^{(m)}),\quad \mathbf{r}=(r,r_{t})=(u^{(n)}-u^{(m)},u_{t}^{(n)}-u_{t}^{(m)}).$$
		Then $r$ satisfies
		\begin{equation}\label{eqnewnew4.2}
			\left.\left\{\begin{array}{ll} r_{tt}+r_{t}+B\Delta^2 r-T\Delta r=Gr,\quad &t>0, \ x\in\Omega ,\\
				r=\partial_\nu r=0,& x\in\partial\Omega ,\\
				r(0,\cdot)=u_{0}^{(n)}-u_{0}^{(m)}, \ \ r_{t}(0,\cdot)=u_{1}^{(n)}-u_{1}^{(m)},& x\in \Omega,\end{array}\right.\right.
		\end{equation}
		where $$G=\frac{\lambda(2+u^{(n)}+u^{(m)})}{(1+u^{(n)})^{2}(1+u^{(m)})^{2}}.$$
		Multiplying the equation in \eqref{eqnewnew4.2} by $r_{t}$, integrating over $\Omega$ to get
		\begin{align}\label{neweq10.4}
			\frac{1}{2}\frac{d}{dt}\left(\left\|r_{t} \right\|_{L^{2}}^{2}+\left\|r \right\|_{H_{D}^{2}}^{2}\right)+\left\|r_{t} \right\|_{L^{2}}^{2}=\int_{\Omega} Grr_{t}dx, \ \forall t>0.
		\end{align}
		We estimate $G$:
		\begin{equation*}
			\left| G\right|\le \lambda
			\frac{\left(2+\big\|u^{(n)} \big\|_{L^{\infty}}+\big\|u^{(m)} \big\|_{L^{\infty}}\right)}{\left(\kappa/4\right)^{4}}\le \frac{128\lambda(8-\kappa)}{\kappa^{4}}:=C_{1}.
		\end{equation*}
		By Young's inequality,
		\begin{equation}\label{neweq10.7}
			\left|\int_{\Omega}Grr_{t}dx\right|\le C_{1}\left(\frac{C_{1}}{2}\left\|r \right\|_{L^{2}}^{2}+\frac{1}{2C_{1}}\left\|r_{t} \right\|_{L^{2}}^{2}\right).
		\end{equation}
		Combining \eqref{neweq10.4} with \eqref{neweq10.7}, we obtain
		\begin{equation*}
			\frac{d}{dt}\left( \left\|r_{t} \right\|_{L^{2}}^{2}+\left\|r \right\|_{H_{D}^{2}}^{2}\right)\le C_{1}^{2}\left\|r \right\|_{L^{2}}^{2}\le C_{1}^{2}\left( \left\|r_{t} \right\|_{L^{2}}^{2}+\left\|r \right\|_{H_{D}^{2}}^{2}\right).
		\end{equation*}
		Solving the differential inequality, we obtain
		\begin{equation}\label{neweq10.11}
			\left\|\mathbf{r}(t) \right\|_{\mathbb{H}}^{2}\le e^{C_{1}^{2}t} \left\|\mathbf{r}_{0} \right\|_{\mathbb{H}}^{2}.
		\end{equation}
		Note that for any $T_{1}>0$, when $n,m\to +\infty$ ,$$e^{C_{1}^{2}T_{1}^{2}}\left\|\mathbf{r}_{0} \right\|_{\mathbb{H}}^{2} \to 0.$$
		Thus $\mathbf{u}^{(n)}$ is a Cauchy sequence in $C([0,T_{1}],\mathbb{H})$. By the equation in \eqref{eqnewnew4.2}, we get that $u_{tt}^{(n)}$ is a Cauchy sequence in $C([0,T_{1}],V')$. Since $u^{(n)}(t)$ is a classical solution,
		\begin{equation}\label{eqnewnew4.2222}
			\mathbf{u}^{(n)}(t)=T(t)\mathbf{u}_{0}^{(n)}+\int_{0}^{t}T(t-\tau)g(\mathbf{u}^{(n)}(\tau))d\tau.
		\end{equation}
		By the definition of Banach space, we assume that $\mathbf{u}^{(n)}(t)\to \tilde{\mathbf{u}}(t)$ in $\mathbb{H}$. Since $T(t)$ is a $C_{0}$-semigroup, by the property of $C_{0}$-semigroup (\cite[Theorem~1.2.2]{ref15}), there are constants $\omega\ge 0$ and $M\ge 1$ such that
		\begin{equation*}
			\left\|T(t) \right\|_{\mathcal{L}(\mathbb{H})}\le Me^{\omega t},\quad 0\le t<\infty.
		\end{equation*}
		Since $1+u^{(n)}\ge \frac{\kappa}{4}$,
		\begin{equation*}
			\left\|g(\mathbf{u}^{(n)}(\tau)) \right\|_{\mathbb{H}}=\left\|f(u^{(n)}) \right\|_{L^{2}}\le 16\lambda \kappa^{-2}|\Omega|^{\frac{1}{2}}.
		\end{equation*}
		Then
		\begin{equation*}
			\left\| T(t-\tau)g(\mathbf{u}^{(n)}(\tau))\right\|_{\mathbb{H}}\le 16\lambda \kappa^{-2}|\Omega|^{\frac{1}{2}}Me^{\omega t}.
		\end{equation*}
		By the definition of $C_{0}$-semigroup, $T(t)$ is a bounded linear operator in $\mathbb{H}$. Then we take limits on both sides of \eqref{eqnewnew4.2222} in $[0,T_{1}]$, we obtain that
		\begin{align*}
			\lim_{n\to +\infty}\mathbf{u}^{(n)}(t)=\tilde{\mathbf{u}}(t).
		\end{align*}
		On the other hands, since $T(t)$ is a bounded linear operator and the Lebesgue convergence theorem, taking limits in both sides of \eqref{eqnewnew4.2222} yields
		\begin{align*}
			&\lim_{n\to +\infty}\left\{T(t)\mathbf{u}_{0}^{(n)}+\int_{0}^{t}T(t-\tau)g(\mathbf{u}^{(n)}(\tau))d\tau\right\}\\
			&=\lim_{n\to +\infty}T(t)\mathbf{u}_{0}^{(n)}+\lim_{n \to +\infty}\int_{0}^{t}T(t-\tau)g(\mathbf{u}^{(n)}(\tau))d\tau\\
			&=T(t)\lim_{n\to +\infty}\mathbf{u}_{0}^{(n)}+\int_{0}^{t}\lim_{n \to +\infty}T(t-\tau)g(\mathbf{u}^{(n)}(\tau))d\tau\\
			&=T(t)\mathbf{u}_{0}+\int_{0}^{t}T(t-\tau)\lim_{n \to +\infty}g(\mathbf{u}^{(n)}(\tau))d\tau\\
			&=T(t)\mathbf{u}_{0}+\int_{0}^{t}T(t-\tau)g(\tilde{\mathbf{u}}(\tau))d\tau.
		\end{align*}
		The detalis are as follows:
		\begin{align}
			\|g(\mathbf{u}^{(n)})-g(\tilde{\mathbf{u}})\|_{\mathbb{H}}&=\|f(u^{(n)})-f(\tilde{u})) \|_{L^{2}}\notag\\
			&=\left\|-\frac{\lambda}{(1+u^{(n)})^{2}}+\frac{\lambda}{(1+\tilde{u})^{2}} \right\|_{L^{2}}\notag\\
			&=\lambda \left\|\frac{u^{(n)}-\tilde{u}}{(1+u^{(n)})^{2}(1+\tilde{u})^{2}} \right\|_{L^{2}}.\label{neweq10.9}
		\end{align}
	    Recall that
		\begin{align*}
			\lim_{n\to +\infty}\mathbf{u}^{(n)}(t)=\tilde{\mathbf{u}}(t).
		\end{align*}
	Using the definition of convergence in the product space, we get $u^{(n)}\to \tilde{u}$ in $H_{D}^{2}(\Omega)$. Then by the strong convergence in Banach space, fix $\varepsilon_{1}=\frac{1-\kappa/8}{C_{0}}-\frac{1-\kappa/4}{C_{0}}>0$, there is an integer $N_{1}\ge N_{0}$ such that when $n\ge N_{1}$,
		\begin{equation*}
			\big\|u^{(n)}-\tilde{u} \big\|_{H_{D}^{2}}\le \varepsilon_{1}.
		\end{equation*}
		By the triangle inequality, we have
		\begin{align*}
			\big\| \tilde{u}\big\|_{H_{D}^{2}}\le \big\|u^{(n)} \big\|_{H_{D}^{2}}+\varepsilon_{1}\le \frac{1-\kappa/4}{C_{0}}+ \frac{1-\kappa/8}{C_{0}}-\frac{1-\kappa/4}{C_{0}}=\frac{1-\kappa/8}{C_{0}}.
		\end{align*}
	Thus
		\begin{equation*}
			\left\|\tilde{u} \right\|_{L^{\infty}}\le C_{0}\left\|\tilde{u} \right\|_{H_{D}^{2}}\le 1-\frac{\kappa}{8}.
		\end{equation*}
		Since $H^{2}\hookrightarrow L^{2}(\Omega)$, $u^{(n)}\to \tilde{u}$ in $L^{2}(\Omega)$. When $n\to +\infty$, by \eqref{neweq10.9}, we obtain
		\begin{equation*}
			\big\|g(\mathbf{u}^{(n)})-g(\tilde{\mathbf{u}})\big\|_{\mathbb{H}}\le 32\lambda \kappa^{-2}\big\|u^{(n)}-\tilde{u} \big\|_{L^{2}}\to 0.
		\end{equation*}
		Now let us turn to our proof, taking limit on both sides of \eqref{eqnewnew4.2222} yields that $\tilde{\mathbf{u}}$ is a classical solution in any interval $[0,T_{1}]$. By the uniqueness of the solution to \eqref{neweq1.2}, we obtain that $\tilde{\mathbf{u}}=\mathbf{u}$. Due to the commutativity of norm and limit, taking limit on both sides of \eqref{eqnewnew4.1} yields \eqref{eqnewnewnewnew4.1}. Therefore, when $\mathbf{u}_{0}\in Z(\kappa)$, \eqref{eqnewnewnewnew4.1} holds for all $t>0$.
		
		By the uniqueness of the solution, we can deduce that $Q(t)$ is a $C_{0}$-semigroup.
		The lemma is proved
	\end{proof}
	
	By Lemma~\ref{newnewlemma 11111}, we define the following energy functional:
	\begin{equation}\label{eq4.10}
		E(\mathbf{u})=\int_{\Omega}\left(\frac{1}{2}\left|u_{t} \right|^{2}+\frac{1}{2}B\left|\Delta u \right|^{2}+\frac{1}{2}T\left|\nabla u \right|^{2}-\frac{\lambda}{1+u}\right)dx.
	\end{equation}
	This lays the foundation for proving that problem \eqref{neweq1.2} defines a gradient system in the next subsection.
	
		\subsection{Gradient System}\label{htdxt}
	Recall that in Lemma~\ref{newnewlemma 11111}, $Q(t):Z(\kappa)\to Z(\kappa)$ is a noninear $C_{0}$-semigroup. To study the property of $\omega$-limit set, it is necessary to prove that \eqref{neweq1.2} defines a gradient system. Motivated by \cite{ref1,ref5,ref9}, we will use the integral equation satisfied by $\mathbf{u}$ to obtain the relatively compactness of the orbit.
	\begin{lemma}\label{Lemma new100000}
		Under the assumptions of Theorem~\ref{theorem1.3}, $\left(Z(\kappa),Q(t),E\right)$ is a gradient system.
	\end{lemma}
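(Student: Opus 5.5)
We will verify the three ingredients of Definition~\ref{predef2.7} and Definition~\ref{predef2.8} for $(Z(\kappa),Q(t),E)$, with $E$ given by \eqref{eq4.10}. $Z(\kappa)$ is a closed subset of $\mathbb{H}$, hence a complete metric space. By Lemma~\ref{newnewlemma 11111}, $Q(t)$ is a nonlinear $C_{0}$-semigroup on $Z(\kappa)$.

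\textbf{Step 1: $E$ is a Lyapunov function.} This is verified as in Lemma~\ref{newnlfh}.
\begin{itemize}
\item Continuity on $Z(\kappa)$ follows from $H^{2}_{D}\hookrightarrow L^{\infty}$ together with $1+u\ge\kappa$. This gives
\[
\Bigl|\int_\Omega\frac{\lambda}{1+u}-\frac{\lambda}{1+v}\Bigr|\le \lambda\kappa^{-2}|\Omega|\,\|u-v\|_{L^\infty}.
\]
\item Monotonicity is the identity \eqref{eqnewnewnewnew4.1}.
\item The lower bound $E\ge-\lambda|\Omega|/\kappa$ holds because $\|u\|_{L^\infty}\le1-\kappa$.
\end{itemize}

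\textbf{Step 2: condition (ii) of Definition~\ref{predef2.8}.} Integrating \eqref{eqnewnewnewnew4.1} gives $E(\mathbf u(t))+\int_0^t\|u_t\|_{L^2}^2\,d\tau=E(\mathbf u_0)$. If $E(Q(t_0)\mathbf u_0)=E(\mathbf u_0)$, then $u_t\equiv0$ on $[0,t_0]$, so $u(t)=u_0$ and $u_{tt}=0$. The equation then gives $Au_0=f(u_0)$, so $\mathbf u_0=(u_0,0)\in\tilde{\mathcal S}$ is a fixed point of $Q(t)$.

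\textbf{Step 3: relative compactness of $\bigcup_{t\ge t_0}Q(t)\mathbf u_0$ in $\mathbb H$.} This is the main obstacle, because the hyperbolic semigroup has no smoothing effect. Following \cite{ref1,ref5,ref9}, we split the mild formulation as
\[
\mathbf u(t)=T(t)\mathbf u_0+\int_0^tT(t-\tau)g(\mathbf u(\tau))\,d\tau.
\]
\begin{itemize}
\item \emph{Linear part.} We show that $T(t)$ decays exponentially on $\mathbb H$. For the damped plate $v_{tt}+v_t+Av=0$, we use the perturbed energy $\frac12\|v_t\|^2+\frac12\|v\|_{H^2_D}^2+\varepsilon(v,v_t)_{L^2}$ with small $\varepsilon>0$. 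This gives $\|T(t)\|_{\mathcal L(\mathbb H)}\le Me^{-\alpha t}$, so $T(t)\mathbf u_0\to0$.
\item \emph{Nonlinear part.} Since $1+u\ge\kappa$ and $\nabla u$ is bounded in $H^1$ (from $\|u\|_{H^2_D}$ bounded), $f(u(\tau))$ is uniformly bounded in $H^{1}_0$. Indeed, $f(u)$ vanishes? No — $f(u)=-\lambda$ on $\partial\Omega$, so $f(u)\notin H^1_0$. We therefore use instead a bound in $H^{s}(\Omega)$ for some $0<s<1/2$, where no trace is involved. This gives a uniform bound of $f(u(\tau))$ in $D(A^{s/4})$.
\item \emph{Compactness of the integral.} The semigroup restricted to the smoother space $D(\mathbb A^{\delta})$, equivalently $H^{2+s}\times H^{s}$, is still exponentially stable. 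Hence the integral term is uniformly bounded in $D(\mathbb A^{\delta})$ for some $\delta>0$.
\end{itemize}
Since $D(\mathbb A^{\delta})\hookrightarrow\mathbb H$ compactly, the nonlinear part lies in a compact set. The linear part tends to $0$. Together these give precompactness of the orbit.

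If the fractional-space argument proves delicate, the fallback is a density argument as in Lemma~\ref{newnewlemma 11111}. We would take data in $D(\mathbb A)$ and differentiate the equation in $t$. Energy estimates for $(u_t,u_{tt})$ give uniform bounds in $H^4_D\times H^2_D$, which pass to the limit as in Lemma~\ref{eqnewlemma35.1}.
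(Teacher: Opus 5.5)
Your Steps~1 and~2 are the paper's argument. The paper simply asserts the Lyapunov property and uses the integrated identity \eqref{eqnew222} exactly as you do. Your treatment of the linear part also matches: the paper gets $\|T(t)\|_{\mathcal L(\mathbb H)}\le \tilde C e^{-\tilde\gamma t}$ from \cite[Proposition~5.3.1]{ref5} plus density, which is your perturbed-energy estimate.

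The two routes differ in how the Duhamel term $\int_0^t T(t-\tau)g(\mathbf u(\tau))\,d\tau$ is shown to be precompact. The paper applies Webb's \cite[Proposition~3.2]{ref9} with $T_1(t)=T(t)$ and $T_2(t)=0$. Its only nonlinear input is that $g(\mathbf u(\tau))$ ranges in a relatively compact set $K\subset\mathbb H$. This holds because $u(\tau)$ stays in a bounded subset of $H^2_D$, hence in a compact subset of $C(\overline\Omega)$, and $s\mapsto-\lambda(1+s)^{-2}$ is Lipschitz on $[-1+\kappa,\infty)$. Exponential decay then makes the part of the integral with $t-\tau>R$ uniformly small. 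The remaining part lies in a compact set built from the closed convex hull of $\{T(r)k:\ r\in[0,R],\ k\in K\}\cup\{0\}$.

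You instead prove a gain of regularity. $f(u(\tau))$ is bounded in $H^1(\Omega)\subset H^s(\Omega)=D(A^{s/4})$ for $0<s<1/2$, so $g(\mathbf u(\tau))$ is bounded in $D(\mathbb A^{s/2})$. $T(t)$ is still exponentially stable there because it commutes with $\mathbb A^{s/2}$, and the compact embedding $D(\mathbb A^{s/2})\hookrightarrow\mathbb H$ concludes. This is correct. Your handling of the trace is exactly right: $f(u)=-\lambda$ on $\partial\Omega$ is what forces $s<1/2$.

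\begin{itemize}
\item \emph{What your route buys:} a uniform bound of the Duhamel term in $(H^{2+s}(\Omega)\cap H^2_D(\Omega))\times H^s(\Omega)$, rather than a bare compactness statement.
\item \emph{What it costs:} the identification $D(\mathbb A^{\delta})=[\mathbb H,D(\mathbb A)]_\delta=D(A^{1/2+\delta/2})\times D(A^{\delta/2})$, which you should justify. For instance, $\mathbb A$ is m-accretive for the energy inner product with $0\in\rho(\mathbb A)$, hence has bounded imaginary powers.
\item \emph{The paper's advantage:} it needs no fractional powers and never meets the boundary issue, since only $L^2$-compactness of $f(u)$ is used.
\end{itemize}

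Drop the fallback, though; as stated it cannot work. Data in $Z(\kappa)$ lie only in $H^2_D\times L^2$, and the damped plate has no smoothing effect: $-\mathbb A$ is a bounded perturbation of a skew-adjoint operator, so $T(t)$ extends to a group. Moreover, $u\in C^1([0,\infty),L^2)$ makes $\tau\mapsto g(\mathbf u(\tau))$ continuously differentiable in $\mathbb H$. So the Duhamel term lies in $D(\mathbb A)$, and $\mathbf u(t)\in D(\mathbb A)$ if and only if $\mathbf u_0\in D(\mathbb A)$.

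Consequently, any bounds for solutions with approximating data $\mathbf u_0^{(n)}\in D(\mathbb A)$ involve $\|\mathbf u_0^{(n)}\|_{D(\mathbb A)}$, which blows up, and nothing passes to the limit. The density argument of Lemma~\ref{eqnewlemma35.1} worked only because there $u_0\in H^4_D$ is part of the hypothesis. What does survive is a uniform $D(\mathbb A)$-bound for the Duhamel term alone. It follows by integrating by parts in $\tau$, using that $\partial_\tau f(u)=f'(u)u_t$ is bounded in $L^2$ along the orbit. That would give yet another proof of Step~3.
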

	\begin{proof}
		It is easily to show that $E:Z(\kappa) \mapsto \mathbb{R}$ is a Lyapunov function. In addition, integrating \eqref{eqnewnewnewnew4.1} with respect to $t$ yields
		\begin{equation}\label{eqnew222}
			E(\mathbf{u}(t))+\int_{0}^{t}\left\|u_t\right\|_{L^2(\Omega)}^2d\tau=E(\mathbf{u}_{0}).
		\end{equation}
		This means if there is $t_{0}>0$ such that $E(\mathbf{u}(t_0))=E(Q(t_0)\mathbf{u}_0)=E(\mathbf{u}_{0})$, then for all $0\le t\le t_0$, $u_{t}=0$, i.e. $u(t)$ is a constant. When $0\le t \le t_{0}$,
		\begin{equation*}
			u(t)=Q(t)\mathbf{u}_{0}=Q(0)\mathbf{u}_{0}=\mathbf{u}_{0}.
		\end{equation*}
		Therefore $\mathbf{u}_{0}$ is a fixed point of $Q(t)$.
		
		Finally, we show that there is $t_{0}>0$ such that the orbit starting from $\mathbf{u}_{0}$
		\begin{equation*}
			\bigcup_{t\ge t_0}Q(t)\mathbf{u}_0
		\end{equation*}
		is relatively compact in $Z(\kappa)$. Due to the lack of regularity of $\mathbf{u}$, it is failed to use the priori estimate of ellptic equation to prove the relatively compactness of the orbit. Now we will use \cite[Proposition~3.2]{ref9}. Recall that $u$ satisfies
		\begin{equation*}
			u(t)=T(t)\mathbf{u}_{0}+\int_{0}^{t}T(t-\tau)g(\mathbf{u}(\tau))d\tau.
		\end{equation*}
		Consider the following problem
		\begin{equation}\label{eqnewnew4.111}
			\left.\left\{\begin{array}{ll}w_{tt}+w_{t}+B\Delta^2w-T\Delta w=0,\quad &t>0, \ x\in\Omega ,\\w=\partial_\nu u=0,&t>0, \  x\in\partial\Omega .\end{array}\right.\right.
		\end{equation}
		Let $I:V\rightarrow V'$ be the identity operator. Since $V\hookrightarrow L^{2}(\Omega)\cong {(L^{2}(\Omega))}'\hookrightarrow V'$, we get
		\begin{gather*}
			I\in \mathcal{L}(V,V'),\\
			\exists \alpha=1,\forall v\in V, \quad (I(v),v)_{V'\times V}=(v,v)_{L^{2}}\ge \alpha \left\|v \right\|_{L^{2}}^{2},\\
			\exists \alpha_{1}^{2}>0,\forall v\in V,\quad \left\|I(v) \right\|_{V'}^{2}\le \alpha_{1}^{2}\left\|I(v) \right\|_{L^{2}}^{2}=\alpha_{1}^{2}(I(v),v)_{V'\times V}.
		\end{gather*}
		where $\alpha_{1}$ is as Introduction. Let $$\mathbf{w}=(w,w_{t}),\quad \mathbf{w}_{0}=(w_{0},w_{1})=(w(0),w_{t}(0)).$$
		When $(w_{0},w_{1}) \in D(\mathbb{A})\cap Z(\kappa)$ 时, by the classical semigroup theory \cite[Theorem~2.2.2]{ref3}), problem \eqref{eqnewnew4.111} admits a unique solution $\mathbf{w}$ with regularity $$\mathbf{w}\in C([0,+\infty),D(\mathbb{A}))\cap C^{1}([0,+\infty),\mathbb{H}).$$
		Then it follows from \cite[Proposition~5.3.1]{ref5} that there are constants $\tilde{C}\ge 1$ and $\tilde{\gamma}>0$ such that
		\begin{equation*}
			\left\|w \right\|_{V}^{2}+\left\|w_{t} \right\|_{L^{2}}^{2}\le \tilde{C}e^{-\tilde{\gamma}_{0} t}\left(\left\|w_{0} \right\|_{V}^{2}+\left\|w_{1} \right\|_{L^{2}}^{2} \right),
		\end{equation*}
		Thus,
		\begin{equation}\label{eqnewnew 4.2}
			\left\|\mathbf{w} \right\|_{\mathbb{H}}^{2}\le \tilde{C}e^{-\tilde{\gamma}_{0} t}	\left\|\mathbf{w}_{0} \right\|_{\mathbb{H}}^{2}
		\end{equation}
		Recall that $\overline{D(\mathbb{A})}=\mathbb{H}$. When $\mathbf{w}_{0}\in \mathbb{H}$, there is a sequence $\mathbf{w}_{0}^{(n)}=(w_{0}^{(n)},w_{1}^{(n)})\in D(\mathbb{A})$ such that $\mathbf{w}_{0}^{(n)}\to \mathbf{w}_{0}$ in $\mathbb{H}$. Let $\mathbf{w}^{(n)}$ be the corresponding solution to the initial value $\mathbf{w}_{0}^{(n)}$, when $n$ is large enough, Let $z=w^{(n)}-w^{(m)}$ and write
		\begin{equation*}
			\mathbf{z}=(z,z_{t}),\quad \mathbf{z}_{0}=(z(0),z_{t}(0)).
		\end{equation*}
		Then $z$ satisfies
		\begin{equation}\label{neweq10.13}
			\left.\left\{\begin{array}{ll} z_{tt}+z_{t}+B\Delta^2 z-T\Delta z=0,\quad &t>0, \ x\in\Omega ,\\
				z=\partial_\nu z=0,& x\in\partial\Omega ,\\
				z(0,\cdot)=w_{0}^{(n)}-w_{0}^{(m)}, \ \ z_{t}(0,\cdot)=w_{1}^{(n)}-w_{1}^{(m)},& x\in \Omega,\end{array}\right.\right.
		\end{equation}
		Multiplying the equation in \eqref{neweq10.13} by $z_{t}$, then integrating over $\Omega$ to get for all $t\ge 0$,
		\begin{align}\label{neweq10.15}
			\frac{1}{2}\frac{d}{dt}\left(\left\|z_{t} \right\|_{L^{2}}^{2}+\left\|z \right\|_{H_{D}^{2}}^{2}\right)+\left\|z_{t} \right\|_{L^{2}}^{2}=0.
		\end{align}
		Integrating \eqref{neweq10.15} with respect to $t$ yields
		\begin{equation*}
			\left\|z_{t} \right\|_{L^{2}}^{2}+\left\|z \right\|_{H_{D}^{2}}^{2}\le \left\|z_{t}(0) \right\|_{L^{2}}^{2}+\left\|z(0) \right\|_{H_{D}^{2}}^{2},
		\end{equation*}
		Thus,
		\begin{equation}\label{neweq10.19}
			\left\|\mathbf{z} \right\|_{\mathbb{H}}^{2}\le 	\left\|\mathbf{z}_{0} \right\|_{\mathbb{H}}^{2}
		\end{equation}
		Since $\mathbf{w}_{0}^{(n)}\to \mathbf{w}_{0}$, when $n,m\to +\infty$, $\left\|\mathbf{z}_{0} \right\|_{\mathbb{H}}\to 0$, By \eqref{neweq10.19}, we obtain
		\begin{equation*}
			\left\|\mathbf{z} \right\|_{\mathbb{H}}\to 0.
		\end{equation*}
		Thus $\mathbf{w}^{(n)}$ is a Cauchy sequence in $\mathbb{H}$. We assume that $\mathbf{w}^{(n)} \to \tilde{\mathbf{w}}$.
		Using integral equation, we have
		\begin{equation*}
			\mathbf{w}^{(n)}=T(t)\mathbf{w}_{0}^{(n)}.
		\end{equation*}
		Since $T(t)$ is a bounded linear operator,
		\begin{equation}
			\tilde{\mathbf{w}}=\lim_{n \to +\infty}\mathbf{w}^{(n)}=\lim_{n \to +\infty}T(t)\mathbf{w}_{0}^{(n)}=T(t)\lim_{n\to +\infty}\mathbf{w}_{0}^{(n)}=T(t)\mathbf{w}_{0}.
		\end{equation}
		By the uniqueness of the solution to \eqref{neweq10.13}, $\mathbf{w}=\tilde{\mathbf{w}}$. Similar to \eqref{eqnewnew 4.2}, $\mathbf{w}^{(n)}$ satisfies
		\begin{equation}\label{neweq13.1}
			\left\|\mathbf{w}^{(n)} \right\|_{\mathbb{H}}^{2}\le \tilde{C}e^{-\tilde{\gamma} t}	\left\|\mathbf{w}_{0}^{(n)} \right\|_{\mathbb{H}}^{2}
		\end{equation}
		Taking limit on both sides of \eqref{neweq13.1} yields \eqref{eqnewnew 4.2}. Note that the solution to \eqref{neweq10.13} can be written as
		\begin{equation*}
			\mathbf{w}(t)=T(t)\mathbf{w}_{0}.
		\end{equation*}
		Then by \eqref{neweq13.1}, we obtain that
		$$\left\|T(t) \right\|_{\mathcal{L}(\mathbb{H})}\le \tilde{C}e^{-\tilde{\gamma} t}.$$
		Let $$T_{1}(t)=T(t),\quad T_{2}(t)=0,\quad \quad c(t)=\tilde{C}e^{-\tilde{\gamma} t}.$$
		Then $T(t)=T_{1}(t)+T_{2}(t)$. We can deduce the following results:
		\begin{itemize}
			\item[(\romannumeral1)] Under the assumptions of Theorem~\ref{theorem1.3}, the orbit $\bigcup_{t\ge0}Q(t)\mathbf{u}_{0}$ is bounded.
			\item[(\romannumeral2)] Recall that the solution $\mathbf{u}$ satisfies integral equation
			\begin{equation*}
				u(t)=T(t)\mathbf{u}_{0}+\int_{0}^{t}T(t-\tau)g(\mathbf{u}(\tau))d\tau,
			\end{equation*}
			where $T(t)$ is a $C_{0}$-semigroup.
			\item[(\romannumeral3)] $\left\|T_{1}(t) \right\|_{\mathcal{L}(\mathbb{H})}\le c(t)$, and $\lim_{t \to +\infty}c(t)=0$.
			\item[(\romannumeral4)] $T_{2}(t)=0$ is compact.
			\item[(\romannumeral5)] Since the embedding $H^{2}(\Omega)\hookrightarrow L^{2}(\Omega)$ is comapct. Thu  $g(u)=-\frac{\lambda}{(1+u)^{2}}:  X(\kappa)\hookrightarrow L^{2}(\Omega)$ is comapct, and the first element of $f(\mathbf{u})$ is $0$, , which is compact.  by Tychonoff theorem in topology, $f(\mathbf{u}):Z(\kappa)\to Z(\kappa)$ is compact.
		\end{itemize}
		Therefore, it follows from \cite[Proposition~3.2]{ref9} that the orbit
		\begin{equation*}
			\bigcup_{t\ge 0}Q(t)\mathbf{u}_{0}
		\end{equation*}
		is relatively compact in $Z(\kappa)$. Thus, by definition~\ref{predef2.8}, $(Z(\kappa),Q(t),E)$ is a gradient system.
	\end{proof}
	
	Since problem \eqref{neweq1.2} defines a gradient system, then by Theorem~\ref{prethm2.4}, $\omega$-limit set $\omega(u_{0},u_{1})$ is a connected comapct invariant set. Furthermore, there are $(\psi,\phi)$ satisfying
	\begin{equation}
		\mathbb{A}(\psi,\phi)^{T}=(\psi,\phi)^{T}
	\end{equation}
	and a sequence $\left\{t_{n}\right\}\to +\infty$, such that
	\begin{equation*}
		\lim_{t\to +\infty}\left\|(u(t_{n},\cdot)-\psi,u_{t}(t_{n},\cdot)-\phi) \right\|_{\mathbb{H}}\to 0\,\,\textup{in }Z(\kappa).
	\end{equation*}
	We can prove $u_{t} \to 0$ in $L^{2}(\Omega)$, which could simplify the representation of $\omega$-limit set.
	
	In fact, integrating \eqref{eqnewnewnewnew4.1} with respect to $t$ to get
	\begin{equation}\label{eq4.11}
		\int_{0}^{t}\left\|u_{t} \right\|_{L^{2}(\Omega)}^{2}d\tau=E(\mathbf{u_{0}})-E(\mathbf{u}(t))\le C_{2},\quad t> 0.
	\end{equation}
	Since $L^{2}(\Omega)\cong {(L^{2}(\Omega))}'\hookrightarrow V'$, we have \begin{equation*}
		\int_{0}^{t}\left\|u_{t} \right\|_{V'}^{2}d\tau\le \alpha_{1}\int_{0}^{t}\left\|u_{t} \right\|_{L^{2}}^{2}d\tau \le C_{3}.
	\end{equation*}
	Using the equation in \eqref{neweq1.2} and the boundedness of the orbit, we deduce that $u_{tt}\in L^{\infty}(0,\infty,V')$. This means
	\begin{equation*}
		\frac{d}{dt}\left\|u_{t} \right\|_{V'}^{2}\le 2\left\|u_{t} \right\|_{V'}\left\|u_{tt}\right\|_{V'}\le C_{4}.
	\end{equation*}
	By the classical theory of infinite integrals in mathematical analysis, when $t\to +\infty$,
	\begin{equation*}
		\left\| u_{t}\right\|_{V'}\to 0.
	\end{equation*}
	by the relatively compactness of the orbit, when $t\to +\infty$, we have
	\begin{equation*}
		\left\| u_{t}\right\|_{L^{2}}\to 0.
	\end{equation*}
	
	Combining the norm in $\mathbb{H}$ and the convergence of $u_{t}$, the  $\omega$-limit set can be simplified as:
	\begin{equation}\label{eq4.13}
		\omega (u_{0},u_{1})=\left\{(\psi,0):\exists \hspace{0.05cm}t_{n}\to +\infty \ \text{such that }\lim_{n\to \infty}\left\|u(t_{n},\cdot)-\psi(\cdot) \right\|_{H_{D}^{2}}=0\right\}.
	\end{equation}
	Thus, $$\omega(u_{0},u_{1})\subset \mathcal{S}\times \left\{0\right\}.$$
	When $(\psi,0)\in \omega(u_{0},u_{1})$, recall that $\left\|u(t) \right\|_{L^{\infty}}\le 1-\kappa$,
	by the definition of $\omega$-limit set \eqref{eq4.13}, there is a sequence $t_{n}\to +\infty$ such that
	\begin{equation*}
		\lim_{n\to \infty}\left\|u(t_{n})-\psi(\cdot) \right\|_{H_{D}^{2}}=0.
	\end{equation*}
	Since $H^{2}(\Omega)\hookrightarrow L^{\infty}(\Omega)$,
	\begin{equation*}
		\lim_{n\to \infty}\left\|u(t_{n})-\psi(\cdot) \right\|_{L^{\infty}}=0.
	\end{equation*}
	By the definition of limit, let $\varepsilon_{0}=\frac{\kappa}{2}$, then there is $n_{0}\in \mathbb{N}$ such that when $n\ge n_{0}$,
	\begin{equation*}
		\left\|u(t_{n})-\psi(\cdot) \right\|_{L^{\infty}}\le \frac{\kappa}{2}.
	\end{equation*}
	By the triangle inequality,
	\begin{equation*}
		\left\|\psi \right\|_{L^{\infty}}\le \left\|u(t_{n}) \right\|_{\infty}+\frac{\kappa}{2}\le 1-\frac{\kappa}{2}.
	\end{equation*}
	
		\subsection{Lojasiewicz-Simon inequality}\label{section3.2}
	In this subsection, we wil prove the Lojasiewicz-Simon inequality established in Lemma~\ref{newnewlemma4.3}, which plays an important role in the subsequent proof of the convergence of the solution.
	
	1999年, Haraux and Jendoubi \cite{ref5} proved generalized Lojasiewicz-Simon inequality, i.e. Lemmma~\ref{prethm2.8}. We will verify the conditions in this theorem to obtain the Lojasiewicz-Simon inequality in our case.
	
	Recall that:
	\begin{gather*}
		V=H_{D}^{2}(\Omega),\quad V'\text{is the dual space of $V$},\quad  A=-(B\Delta^{2}-T\Delta),\\
		f(u)=-\frac{\lambda}{(1+u)^{2}},\quad
		(\psi,0) \in \omega(u_{0},u_{1})\subset \mathcal{S}\times \left\{0\right\}.
	\end{gather*}
	Let $(\psi,0)\in \omega(u_{0},u_{1})$ be fixed. Let $u=v+\psi$, then $v$ satisfies
	\begin{equation}\label{eq4.14}
		\left.\left\{\begin{array}{ll} v_{tt}+v_{t}+B\Delta^2 v-T\Delta v=f(v,\psi),\quad &t>0, \ x\in\Omega ,\\
			v=\partial_\nu v=0,& x\in\partial\Omega ,\\
			v(0,\cdot)=u_{0}-\psi, \ \ v_{t}(0,\cdot)=u_{1},& x\in \Omega,\end{array}\right.\right.
	\end{equation}
	where
	\begin{align}
		f(v,\psi)=-\frac{\lambda}{(1+v+\psi)^{2}}+\frac{\lambda}{(1+\psi)^{2}},\label{eq4.17}
	\end{align}
	Let
	\begin{equation}\label{neweq11.1}
		F(v):=\int_{0}^{v}f(s,\psi)ds=\frac{\lambda v^{2}}{(1+v+\psi)^{2}(1+\psi)^{2}}.
	\end{equation}
	Now we are ready to prove the  Lojasiewicz-Simon lemma.
	\begin{lemma}\label{newnewlemma4.3}
		Under the assumptions of Theorem~\ref{theorem1.3}, there are two constants $\theta\in(0,\frac{1}{2})$ and $\sigma>0$ such that for all $v\in V$ satisfying $\left\|v \right\|_{V}<\sigma$,
		\begin{equation}\label{newlsinequality}
			\left\|-Av+f(v,\psi) \right\|_{V'}\ge |\mathcal{E}(v) |^{1-\theta}.
		\end{equation}
	\end{lemma}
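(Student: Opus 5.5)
The plan is to obtain \eqref{newlsinequality} directly from Theorem~\ref{prethm2.8} (Haraux--Jendoubi). The inequality is applied to the shifted problem around the fixed equilibrium $\psi$, with $d=1$ unknown component.

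\textbf{Setting.} I take the abstract operator to be the elliptic part $\mathcal{A}=B\Delta^{2}-T\Delta$ with Dirichlet conditions, acting from $V=H_{D}^{2}(\Omega)$ to $V'$. Its form is $a(u,v)=\int_\Omega (B\Delta u\Delta v+T\nabla u\nabla v)\,dx$. I take the nonlinearity to be $f(\cdot,\psi)$ from \eqref{eq4.17}, with primitive $F$ from \eqref{neweq11.1}. The energy is
\[
\mathcal{E}(v)=\tfrac12\|v\|_{H_D^2}^2-\int_\Omega F(x,v)\,dx .
\]
Since $f(0,\psi)=0$ and $F(0)=0$, the function $v=0$ is a critical point with $\mathcal{E}(0)=0$. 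So the target $|\mathcal{E}(v)-\mathcal{E}(0)|^{1-\theta}$ is exactly the right-hand side of \eqref{newlsinequality}.

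\textbf{Structural hypotheses.} These are routine.
\begin{itemize}
\item $V$ is dense in $L^{2}(\Omega)$ and compactly embedded in it, by Rellich for $H^{2}$.
\item The form $a$ is symmetric, continuous and coercive on $H_D^2$. Coercivity holds because $\|\Delta v\|_{L^2}$ is an equivalent norm on $H^2\cap H^1_0$ and $B>0$.
\item $\mathcal{A}:V\to V'$ is an isomorphism by Lax--Milgram, with $D(\mathcal{A})=H_D^4(\Omega)$ by elliptic regularity \cite[Corollary~2.21]{refnew101}.
\end{itemize}
I take $p=2$. Then (H1) holds: $\mathcal{A}^{-1}(L^{2})=H_D^4\hookrightarrow L^\infty$ when $d\le 2$ (even $H^2\hookrightarrow L^\infty$ suffices). (H2) holds trivially: any $u\in D$ already lies in $L^2$.

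\textbf{(H3): truncation of the singular nonlinearity.} The nonlinearity blows up at $v=-1-\psi$, while (H3) demands analyticity in $s$ and bounded $\nabla F,\nabla^2F$ on every strip $(-\beta,\beta)$. I will therefore replace $f$ by a modification that only changes it far from the relevant range.

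Recall $\|\psi\|_{L^\infty}\le 1-\kappa/2$, and for $\|v\|_V<\sigma$ small, $\|v\|_{L^\infty}\le C_0\sigma\le \kappa/4$. On the set $\{|s|\le \kappa/4\}$ we have $1+s+\psi\ge\kappa/4$, so $f(x,s)$ is real-analytic in $s$ uniformly in $x$. Its power series at any $s_0$ in this range has radius at least $\kappa/4$, uniformly in $x$, and $f,\partial_s f$ are bounded there.

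I define $\tilde f(x,s)=f(x,\chi(s))$, where $\chi$ is a fixed entire odd function with $\chi(s)=s$ near $[-\kappa/8,\kappa/8]$ up to analytic perturbation; a cleaner option is to keep $f$ and verify analyticity only on $|s|\le\kappa/4$. In fact, the Haraux--Jendoubi proof uses (H3) only on $\{|s|\le \|\varphi\|_\infty+1\}$ after an $L^\infty$ localization. So I will note that their argument, which follows the same Lyapunov--Schmidt reduction as Lemma~\ref{LSbudengshipaowupaowu}, needs (H3) only on a neighbourhood of the range of $v$. Shrinking $\sigma$ keeps $v$ in that neighbourhood, by $\|v\|_{L^\infty}\le C_0\|v\|_V$.

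\textbf{Regularity of the Nemytskii map.} The last hypothesis is $f\in C^{1}(B_\rho,V')$ on a ball around $0$ in $V$. For $\rho\le\kappa/(4C_0)$, the Nemytskii map $v\mapsto f(\cdot,v)$ is $C^1$ from $V$ into $L^\infty$, with derivative $h\mapsto 2\lambda(1+v+\psi)^{-3}h$. This uses the uniform lower bound on $1+v+\psi$ and the embedding $V\hookrightarrow L^\infty$. Composing with $L^\infty\hookrightarrow L^2\hookrightarrow V'$ (norm $\le\alpha_1$) gives $C^1(B_\rho,V')$.

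\textbf{Conclusion and main obstacle.} Theorem~\ref{prethm2.8} then yields $\theta\in(0,\tfrac12)$ and $\sigma>0$ with $\|-\mathcal{A}v+f(v,\psi)\|_{V'}\ge|\mathcal{E}(v)|^{1-\theta}$. This is \eqref{newlsinequality} with the paper's sign convention $A=-\mathcal{A}$. The step I expect to be the main obstacle is (H3), because of the singularity and the global-boundedness requirement. I will handle it by the localization described above, decreasing $\sigma$ as needed.
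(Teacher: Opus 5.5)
Your proposal is correct and follows the same route as the paper. You apply Theorem~\ref{prethm2.8} to the problem shifted at $\psi$ (so $\varphi=0$ and $p=2$), check the form properties, get (H1)--(H2) from $H^{2}\hookrightarrow L^{\infty}$, and verify (H3) and $f\in C^{1}(B_{0},V')$ only on a small $V$-ball where $1+v+\psi\ge\kappa/4$; this is the localization the paper uses implicitly, and you justify it more explicitly.

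Two cosmetic slips should be fixed. Drop the entire-$\chi$ truncation, since an entire function equal to $s$ on an interval is the identity. Also, the correct identification is $A=\mathcal{A}=B\Delta^{2}-T\Delta$, as in the paper's own $a(u,v)=\int_\Omega(B\Delta^{2}u-T\Delta u)v\,dx$, not $A=-\mathcal{A}$.
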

	\begin{proof}
		Note that $\mathcal{E}(0)=0$, thus \eqref{newlsinequality} is equivalent to
		\begin{equation*}
			\left\|-Av+f(v,\psi) \right\|_{V'}\ge |\mathcal{E}(v)-\mathcal{E}(0)|^{1-\theta}.
		\end{equation*}
		This is consistent with form in the inequality in Theorem~\ref{prethm2.8}.
		
		By \cite[Section~3.1]{ref2}, $-A$ generates an analytic semigroup on $L^{2}(\Omega)$, then by th classical semigroup theory \cite[Theorem~2.6.8(c)]{ref15},
		$$\overline{H_{D}^{2}(\Omega)}=\overline{D(A^{\frac{1}{2}})}=L^{2}(\Omega).$$
		In addition, by Sobolev compact embedding theorem, the embedding $V\hookrightarrow L^{2}(\Omega)$ is compact.
		
		Let
		\begin{equation*}
			a(u,v)=(Au,v)_{L^{2}}.
		\end{equation*}
		We verify that $a(u,v)$ is a bilinear continuous form on $V$ which
		is symmetric and coercive:
		\begin{itemize}
			\item[(\romannumeral1)]  Bilinearity: Since $A$ is linear, for any $ u,v,w\in V$ and $k_{1},k_{2}\in \mathbb{R}$, we have
			\begin{align*}
				a(k_{1}u+k_{2}v,w)&=(A(k_{1} u+k_{2} v),w)_{L^{2}}\\
				&=k_{1}(Au,w)_{L^{2}}+k_{2}(Av,w)_{L^{2}}\\
				&=k_{1}a(u,w)+k_{2}a(v,w).
			\end{align*}
			\item[(\romannumeral2)] Continuity: Let $u,v,w\in V$ and assume that $u \to v$ in $V$, then
			\begin{align*}
				&\left|a(u,w)-a(v,w)\right|\\
				&=\left|a(u-v,w) \right|\\
				&= \left|\int_{\Omega}B\Delta^{2} (u-v) wdx-\int_{\Omega}T\Delta (u-v) wdx \right|\\
				&= \left|\int_{\Omega}B\Delta (u-v)\Delta wdx+\int_{\Omega}T\nabla (u-v)\nabla wdx \right|\\
				&\le B\left\|\Delta(u-v)\Delta w\right\|_{L^{1}}+T\left\|\nabla(u-v) \nabla w \right\|_{L^{1}}\\
				&\le B\left\|\Delta(u-v) \right\|_{L^{2}}\left\|\Delta w \right\|_{L^{2}}+T\left\|\nabla(u-v) \right\|_{L^{2}}\left\|\nabla w \right\|_{L^{2}}\to 0.
			\end{align*}
			\item[(\romannumeral3)] Symmetry: For any $ u,v\in V$,
			\begin{align*}
				a(u,v)&=\int_{\Omega}\left(B\Delta^{2}u-T\Delta u\right)vdx\\
				&=\int_{\Omega}\left(B\Delta u\Delta v+T\nabla u\nabla v \right)dx\\
				&=\int_{\Omega}\left(B\Delta^{2}v-T\Delta v\right)udx\\
				&=a(v,u).
			\end{align*}
			where Hölder inequality is employed.
			\item[(\romannumeral4)] Coercivity: For any $ u\in V$,
			\begin{equation*}
				a(u,u)=\int_{\Omega}\left(B\Delta^{2}u-T\Delta u\right)udx=\left\|u \right\|_{H_{D}^{2}}^{2}\ge C_{5}\left\|u \right\|_{L^{2}}^{2}.
			\end{equation*}
			where $H^{2}(\Omega)\hookrightarrow L^{2}(\Omega)$ is employed.
		\end{itemize}
		Let $p=2$, we verify the conditions (H1)-(H3):
		\begin{itemize}
			\item[(H1)] Note that $$A^{-1}(L^{2}(\Omega))=\left\{ v\in V|Av\in L^{2}(\Omega) \right\}.$$
			Recall that $H^{2}(\Omega)\hookrightarrow L^{\infty}(\Omega)$ is continuous, we obtain that $A^{-1}(L^{2}(\Omega))\hookrightarrow L^{\infty}(\Omega)$ is continuous.
			\item[(H2)] when $u\in D=      \left\{ v\in V|Av\in L^{2}(\Omega) \right\}$, $u\in V\subset L^{2}(\Omega)$.
			\item[(H3)] Recall that
			\begin{gather*}
				F(v):=\int_{0}^{v}f(s,\psi)ds=\frac{\lambda v^{2}}{(1+v+\psi)^{2}(1+\psi)^{2}}.
			\end{gather*}
			Similar to the discussion in Lemma~\ref{LSbudengshipaowupaowu}, if $\left\|h \right\|_{V}$ is small enough, $F(v)$ is analytic at $0$. Let $$B_{0}=\left \{v\in V,\left\|v \right\|_{V}<\rho \right \}.$$
			On the other hands, by \cite[引理~2.5]{ref5} and the proof of Lemma~\ref{LSbudengshipaowupaowu}, it suffices to show that $\frac{\partial F}{\partial v}$ and $\frac{\partial^{2} F}{\partial v^{2}}$ are bounded in $B_{0}$. Note that $(1+v+\psi)-(1+\psi)=v$, and $H^{2}(\Omega)\hookrightarrow L^{\infty}(\Omega)$, thus we can choose $\rho=\frac{\kappa}{4C}$ such that
			\begin{gather*}
				1+v+\psi\ge \frac{\kappa}{4},\\
				\left\|v \right\|_{L^{\infty}}\le C_{0}\left\|v \right\|_{H_{D}^{2}}\le \frac{\kappa}{4}.
			\end{gather*}
			Recall that in subsection~\ref{htdxt}, we obtain $\left\|\psi \right\|_{L^{\infty}}\le 1-\frac{\kappa}{2}$. Then
			\begin{gather*}
				\frac{\partial F}{\partial v}=f(v,\psi)=\frac{(2+v+2\psi)v}{(1+v+\psi)^{2}(1+\psi)^{2}}\le \frac{(2+\frac{\kappa}{4}+2-\kappa)(\frac{\kappa}{4})}{(\frac{\kappa}{4})^{2}(\frac{\kappa}{2})^{2}}=\frac{4(16-3\kappa)}{\kappa^{3}}.
			\end{gather*}
			Similarly, we can proof $\frac{\partial^{2} F}{\partial v^{2}}$ is bounded.
		\end{itemize}
		Finally, we show that $f\in C^{1}(B_{0},V')$. Similar to the discussion above, let $\rho=\frac{\kappa}{4C}$, for all $v\in B_{0}$, we have
		$$1+v+\psi\ge \frac{\kappa}{4},\quad  \left\|v \right\|_{L^{\infty}}\le \frac{\kappa}{4}.$$
		For given $v_{1}\in B_{0}$, for any $\varepsilon>0$, let $v_{2}\in B_{0}$ such that $$\left\|u_{1}-u_{2} \right\|_{V}\le \delta=\frac{\kappa^{4}}{128\alpha_{1}\alpha_{2}\lambda(8-\kappa)}\varepsilon.$$
		A direct calculation yields
		\begin{align*}
			\left\|f(v_{1})-f(v_{2}) \right\|_{V'}&=\left\|-\frac{\lambda}{(1+v_{1}+\psi)^{2}}+\frac{\lambda}{(1+v_{2}+\psi)^{2}} \right\|_{V'}\\
			&=\lambda\left\|\frac{(2+v_{1}+v_{2}+2\psi)(v_{1}-v_{2})}{(1+v_{1}+\psi)^{2}(1+v_{2}+\psi)^{2}} \right\|_{V'}\\
			&\le \frac{128\lambda(8-\kappa)}{\kappa^{4}}\left\|v_{1}-v_{2} \right\|_{V'}\\
			&\le \frac{128\alpha_{1}\alpha_{2}\lambda(8-\kappa)}{\kappa^{4}}\left\|v_{1}-v_{2} \right\|_{V}\le \varepsilon.
		\end{align*}
		Thus, $f\in C(B_{0},V')$.
		Similarly, we can prove $f'\in C(B_{0},V')$, which means $f\in C^{1}(B_{0},V')$.
		Therefore, the conditions listed in Theorem~\ref{prethm2.8} are satisfied, which indicates Lemma~\ref{newnewlemma4.3} is proved.
	\end{proof}
	
		\subsection{Proof of Theorem~\ref{theorem1.3}}
	In this subsection, we will prove the global solution to \eqref{neweq1.2} must converge to one stationary solution, and we establish the convergence rate.
	
	We will prove Theorem~\ref{Theorem 4.1}, which is consistent with the convergence established in \eqref{eq1.14}, our proof is motivated by \cite{ref1,ref5}. Recall that $V=H_{D}^{2}(\Omega)$, $u=v+\psi$, $v$ satisfies \eqref{eq4.14}. For $t\ge 0$, let
	\begin{gather*}
		\mathcal{E}(v):=\int_{\Omega}\left(\frac
		{1}{2}\left|v_{t}\right|^{2}+\frac{B}{2}\left|\Delta v \right|^{2}+\frac{T}{2}\left|\nabla v \right|^{2}-F(v,\psi)\right)dx,\\
		G(t):=\mathcal{E}(v)+\varepsilon\left(Av-f(v,\psi),v_{t}\right)_{V'\times V'},
	\end{gather*}
	where $f(v,\psi)$ is as in \eqref{eq4.17}, $F(v)$ is as in \eqref{neweq11.1}, $\varepsilon>0$ is being determined.
	\begin{theorem}\label{Theorem 4.1}
		Under the assumptions of Theorem~\ref{theorem1.3}, let $u$ be the global solution to \eqref{neweq1.2}, then there is $\psi \in \mathcal{S}$ such that
		\begin{equation*}
			\lim\limits_{t\to+\infty}\left\{\left\|u_{t} \right\|_{L^{2}}+\left\|u(t,\cdot)-\psi(\cdot)\right\|_{V}\right\}=0.
		\end{equation*}
	\end{theorem}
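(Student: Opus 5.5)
We follow the Haraux--Jendoubi strategy for damped second-order gradient-like systems, using the perturbed energy $G(t)=\mathcal{E}(v)+\varepsilon\left(Av-f(v,\psi),v_{t}\right)_{V'\times V'}$ introduced just before the statement. Fix $(\psi,0)\in\omega(u_{0},u_{1})$ and a sequence $t_{n}\to\infty$ with $\|u(t_{n})-\psi\|_{V}\to 0$; recall that $\|u_{t}\|_{L^{2}}\to0$ was already shown. Since $\mathcal{E}(v(t))$ and $E(\mathbf{u}(t))$ differ only by the constant $E(\psi,0)$, which follows from the primitive $F$ and $\psi\in\mathcal{S}$, the function $\mathcal{E}$ is nonincreasing. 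It converges to $0$ along $t_{n}$, hence $\mathcal{E}(v(t))\ge 0$ and $\mathcal{E}(v(t))\downarrow 0$. If $\mathcal{E}$ vanishes at some finite time, then $u_{t}\equiv0$ afterwards and the claim is immediate. We therefore assume $\mathcal{E}>0$.

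\emph{Step 1 (differential inequality for $G$).} Working on the regularized data and passing to the limit exactly as in Lemma~\ref{newnewlemma 11111}, we compute $G'(t)$. We use $\mathcal{E}'=-\|v_{t}\|_{L^{2}}^{2}$ and $v_{tt}=-v_{t}-(Av-f(v,\psi))$, with the sign convention $Av=B\Delta^{2}v-T\Delta v$ as in the equation. This gives
\[
G'=-\|v_t\|_{L^2}^2-\varepsilon\|Av-f(v,\psi)\|_{V'}^2+\varepsilon\big((A-f'(u))v_t,v_t\big)_{V'\times V'}-\varepsilon\big(Av-f(v,\psi),v_t\big)_{V'\times V'}.
\]
For the third term we use the inner product of Definition~\ref{neiji}: $(Av_t,v_t)_{V'\times V'}\le\|v_t\|_{L^2}^2$, while $\|f'(u)v_t\|_{V'}\le C\|v_t\|_{L^2}$ because $1+u\ge\kappa$. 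Bounding the last term by Young, and choosing $\varepsilon$ small, we obtain
\[
-G'\ge c\left(\|v_t\|_{L^2}+\|Av-f(v,\psi)\|_{V'}\right)^2 .
\]
We also have $|G-\mathcal{E}|\le\varepsilon(\|Av-f\|_{V'}^2+\alpha_1^2\|v_t\|_{L^2}^2)$.

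\emph{Step 2 (Łojasiewicz step).} As long as $\|v(t)\|_{V}<\sigma$, Lemma~\ref{newnewlemma4.3}, applied to the potential part, together with $|a+b|^{1-\theta}\le|a|^{1-\theta}+|b|^{1-\theta}$ and $\|v_t\|^{2(1-\theta)}\le C\|v_t\|$ for small $\|v_t\|_{L^2}$ (with $2(1-\theta)>1$), gives
\[
G^{1-\theta}\le C(\|v_t\|_{L^2}+\|Av-f\|_{V'}).
\]
Hence
\[
-\frac{d}{dt}G^{\theta}=-\theta G^{\theta-1}G'\ge c(\|v_t\|_{L^2}+\|Av-f\|_{V'})\ge c\|v_t\|_{L^2}.
\]
Here we must check that $G$ stays positive, or handle it via $|G|$. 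We do this by noting that $G$ is nonincreasing and tends to $0$ along $t_{n}$; the degenerate case where $G$ reaches $0$ is treated as above.

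\emph{Step 3 (trapping and compactness).} Define $\bar t_{n}$ as the exit time from the $V$-ball of radius $\sigma$ around $\psi$, as in Theorem~\ref{shoulianxingpaowu}. Integrating Step 2 gives
\[
\int_{t_n}^{\bar t_n}\|u_t\|_{L^2}\le C\,G(t_n)^{\theta}\to0,
\]
so $u(\bar t_n)\to\psi$ in $L^{2}$. The relative compactness of the orbit in $Z(\kappa)$ from Lemma~\ref{Lemma new100000} upgrades this to convergence in $V$ along a subsequence, contradicting the definition of $\bar t_n$. Therefore $\bar t_n=\infty$ for some $n$, $u_t\in L^{1}(t_n,\infty;L^{2})$, and $u(t)$ converges in $L^{2}$. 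Compactness then gives convergence in $V$, and since the limit point is unique it equals $\psi$. Combined with $\|u_t\|_{L^2}\to0$, this proves the theorem.

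The main obstacle is Step 1. We must control the cross term $\varepsilon((A-f'(u))v_t,v_t)_{V'\times V'}$ using only $\|v_t\|_{L^2}$, with no extra regularity. This requires a careful use of the $V'$ inner product via $A_{\Delta^2}^{-1}$ (so $A\le A_{\Delta^2}$ in that pairing) and a justification of the differentiation of $G$ by density.
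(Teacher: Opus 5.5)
Your proposal is correct and follows essentially the same route as the paper: the same perturbed functional $G$, the same inequality $-G'\ge c\,(\|v_t\|_{L^2}+\|Av-f(v,\psi)\|_{V'})^2$, the Lojasiewicz step for $G^{\theta}$ via Lemma~\ref{newnewlemma4.3}, and the exit-time plus orbit-compactness trapping argument. Your version is somewhat more careful than the paper's: you apply the Lojasiewicz--Simon inequality only to the potential part of $\mathcal{E}$, bound $(Av_t,v_t)_{V'\times V'}\le\|v_t\|_{L^2}^2$ directly, justify $G\ge 0$ and treat the degenerate case, and note that differentiating $G$ needs a density argument.
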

	\begin{proof}
		Since $v=u-\psi$, then the above equality is equivalent to
		\begin{equation*}
			\lim\limits_{t\to+\infty}\left\{\left\|v_{t} \right\|_{L^{2}}+\left\|v(t,\cdot)\right\|_{V}\right\}=0.
		\end{equation*}
		Using the equation in \eqref{eq4.14}, a direct calculation yields that
		\begin{gather}
			G'(t)=\left\|v_{t} \right\|_{L^{2}}^{2}+\varepsilon\Big\{\left(Av_{t}-f'(v,\psi)v_{t},v_{t}\right)_{V'\times V'}\notag\\
			+\left(Av-f(v,\psi),v_{tt}\right)_{V'\times V'}\Big\}.\label{eq4.18}
		\end{gather}
		where $$f'(v,\psi)=\frac{\partial f}{\partial v}(v,\psi).$$
		Using the equation satisfied by $v$ to get
		\begin{align}
			\left(Av-f(v,\psi),v_{tt}\right)_{V'\times V'}
			&=\left(Av-f(v,\psi),-Av+f(v,\psi)-v_{t}\right)_{V'\times V'}\notag\\
			&=-\left\|Av-f(v,\psi) \right\|_{V'}^{2}-\left(Av-f(v,\psi),v_{t}\right)_{V'\times V'}\label{eq4.20}
		\end{align}
		Combining \eqref{eq4.18} and \eqref{eq4.20} yields
		\begin{gather}
			G'(t)=-\left\|v_{t} \right\|_{L^{2}}^{2}+\varepsilon\Big\{\left(Av_{t}-f'(v,\psi)v_{t},v_{t}\right)_{V'\times V'}-\left\|Av-f(v,\psi) \right\|_{V'}^{2}\notag\\
			-\left(Av-f(v,\psi),v_{t}\right)_{V'\times V'}\Big\}\label{eq4.21},
		\end{gather}
		By Definition~\ref{neiji}, we have
		\begin{align*}
			( Av_{t},v_{t} )_{V'\times V'}=( (A_{\Delta^{2}}-I)v_{t},v_{t} )_{V'\times V'}\le( A_{\Delta^{2}}v_{t},v_{t} )_{V'\times V'}.
		\end{align*}
		Let $w=A_{\Delta^{2}}^{-1}v_{t}$, by Definition~\ref{neiji}, $w\in V$. Then $w$ satisfies
		\begin{equation*}
			\left.\left\{\begin{array}{ll}B\Delta^2w-T\Delta w+w=v_{t},\quad &t>0, \ x\in\Omega ,\\
				w=\partial_\nu w=0,& x\in\partial\Omega.\end{array}\right.\right.
		\end{equation*}
		Consider the homogeneous equation
		\begin{equation*}
			\left.\left\{\begin{array}{ll}B\Delta^2w-T\Delta w+w=0,\quad &t>0, \ x\in\Omega ,\\
				w=\partial_\nu w=0,& x\in\partial\Omega.\end{array}\right.\right.
		\end{equation*}
		Multiplying the above equation by $w$, then integrating over $\Omega$ yields
		\begin{equation*}
			\int_{\Omega}B|\Delta w|^{2}+T|\nabla w|^{2}+|w|^{2}=0.
		\end{equation*}
		This means $w\equiv 0$, i.e. $Ker(A)=0$. Then by priori estimate of ellptic equation, we get
		\begin{equation}\label{eq4.104}
			\left\|w \right\|_{H^{4}}\le C_{6}\left\|v_{t} \right\|_{L^{2}},
		\end{equation}
		where $C_{6}$ is a constant independent of $w$. Since $w$ and $v_{t}$ satisfies Dirichlet boundary condition, a direct calculation yields
		\begin{align*}
			( A_{\Delta^{2}}v_{t},v_{t} )_{V'\times V'}&=( A_{\Delta^{2}}v_{t},A_{\Delta^{2}}^{-1}v_{t} )_{V'\times V}\\
			&=\int_{\Omega}v_{t}wdx+T\int_{\Omega}\nabla v_{t} \nabla w dx+B\int_{\Omega}\Delta v_{t} \Delta wdx\\
			&=\int_{\Omega}v_{t}\left(B\Delta^{2}w-T\Delta w+w \right)dx.
		\end{align*}
		By Hölder inequality and \eqref{eq4.104}, we have
		\begin{align}
			( A_{\Delta^{2}}v_{t},v_{t} )_{V'\times V'}&\le \left\|v_{t} \right\|_{L^{2}} \left\|B\Delta^{2}w-T\Delta w+w \right\|_{L^{2}}\notag\\
			&\le C_{7}\left\|v_{t} \right\|_{L^{2}} \left\|w \right\|_{H^{4}}\notag\\
			&\le C_{6}C_{7} \left\|v_{t} \right\|_{L^{2}}^{2}.\label{eq4.102}
		\end{align}
		Since $u\ge -1+\kappa$,
		\begin{equation*}
			\left|f'(v,\psi) \right|=\left|\frac{2\lambda}{(1+u)^{3}} \right|\le2\lambda\kappa^{-3}.
		\end{equation*}
		Employing Cauchy-Schwartz inequality yields
		\begin{align}
			(-f'(v,\psi)v_{t},v_{t})_{V'\times V'}&\le \left\|f'(v,\psi)v_{t} \right\|_{V'}\left\|v_{t} \right\|_{V'}\notag\\
			&\le 2\lambda\kappa^{-3}\left\|u_{t} \right\|_{V'}^{2}\notag\\
			&\le 2\lambda \kappa^{-3}\alpha_{1} \left\|v_{t} \right\|_{L^{2}}^{2}.\label{eq4.103}
		\end{align}
		By Young's inequality, we get
		\begin{equation}\label{eq4.23}
			-\left(Av-f(v,\psi),v_{t}\right)_{V'}\le \frac{1}{2}\left\|Av-f(v,\psi) \right\|_{V'}^{2}+\frac{1}{2}\left\|v_{t} \right\|_{V'}^{2}.
		\end{equation}
		Combining \eqref{eq4.21}, \eqref{eq4.102}-\eqref{eq4.23}, we obtain that
		\begin{equation*}
			G'(t)\le \left(-1+C_{8}\varepsilon+\frac{\varepsilon}{2}\right)\left\|v_{t} \right\|_{L^{2}(\Omega)}^{2}-\frac{\varepsilon}{2}\left\|Av-f(v,\psi) \right\|_{V'}^{2}.
		\end{equation*}
		Let $\varepsilon>0$ be small enough, we obtain that there is a constant $C_{9}>0$ such that for all $t\ge 0$
		\begin{align}
			-G'(t)&\ge C_{9}\left(\left\|v_{t} \right\|_{L^{2}}^{2}+\left\|-Av+f(v,\psi) \right\|_{V'}^{2}\right)\notag\\
			&\ge \frac{C_{9}}{2}\left(\left\|v_{t} \right\|_{L^{2}}+\left\|-Av+f(v,\psi) \right\|_{V'}\right)^{2}.\label{eq4.24}
		\end{align}
		This means $G(t)$ is monotone non-increasing on $[0,+\infty)$. Since $(\psi,0)\in\omega(u_{0},u_{1})\subset\mathcal{S}\times \{0\}$, there is a sequence $t_{n}\to +\infty$ such that $v(t_{n},\cdot)\to 0$ in $V$. Recall that $\left\|v_{t} \right\|_{L^{2}}\to 0$, we have $\mathcal{E}(v(t_{n}))\to0$. On the other hands, since $Av\in V'$ and $f\in C^{1}(V,V')$, by Cauchy-Schwartz inequality, we obtain that
		\begin{align*}
			(-Av+f(v,\psi),v_{t})_{V'\times V'}&\le \left\|-Av+f(v,\psi) \right\|_{V'}\left\|v_{t} \right\|_{V'}\\
			&\le \alpha_{1}\left\|-Av+f(v,\psi) \right\|_{V'}\left\|v_{t} \right\|_{L^{2}}\to 0.
		\end{align*}
		Thus by the definition of $G(t)$, we get $G(t_{n})\to 0$. Since $G'(t)\le 0$, we obtain $G(t_{n})\ge 0$. Then for any $\varepsilon_{0}>0$, there is an integer $n_{0}>0$ such that when $n\ge n_{0}$, $G(t_{n})<\varepsilon_{0}$. For $t\in [t_{n_{0}},t_{n_{0}+1}]$, we have
		\begin{equation*}
			0\le G(t_{n_{0}+1})\le G(t)\le G(t_{n_{0}})<\varepsilon.
		\end{equation*}
		This means for any $\varepsilon_{0}>0$, there is $t_{n_{0}}>0$ such that when $t\in [t_{n_{0}},t_{n_{0}+1}]$, $0\le G(t)<\varepsilon$. Since $G$ is monontone non-increasing, when $t\ge t_{n_{0}}$, $0<G(t)<\varepsilon$. Thus, by the definition of limit, when $t \to +\infty$, $G(t)\to 0$. Therefore $G(t)\ge 0$ on $[0,\infty)$.
		
		Let us continue the proof of Theorem~\ref{Theorem 4.1}. Let $\theta\in(0,\frac{1}{2})$ is as in Lemma~\ref{newnewlemma4.3}, we have
		\begin{equation}\label{eq4.26}
			-\frac{d}{dt}\left(G(t)\right)^{\theta}=-\theta G'(t)\left(G(t)\right)^{\theta-1},\quad t\ge 0.
		\end{equation}
		Applying the generalized triangle inequality and Cauchy Schwartz inequality, we obtain
		\begin{equation}\label{eq4.27}
			\left(G(t)\right)^{1-\theta}\le C_{10}\left\{\left\|v_{t} \right\|_{L^{2}}^{2(1-\theta)}+|{\mathcal{E}}(v)|^{1-\theta}+\left\|-Av+f(v,\psi) \right\|_{V'}^{1-\theta}\left\|v_{t} \right\|_{L^{2}}^{1-\theta}\right\}.
		\end{equation}
		Let $a=\left\|-Av+f(v,\psi) \right\|_{V'}^{1-\theta}$, $b=\left\|v_{t} \right\|_{L^{2}}^{1-\theta}$. By Young's ineuqality, we have
		\begin{equation*}
			ab\le \frac{a^{p}}{p}+\frac{b^{q}}{q},
		\end{equation*}
		where $p,q\ge 1$ and $\frac{1}{p}+\frac{1}{q}=1$. let $p=\frac{1}{1-\theta}$, $q=\frac{1}{\theta}$, we get
		\begin{align}
			\left\|-Av+f(v,\psi) \right\|_{V'}^{1-\theta}\left\|v_{t} \right\|_{L^{2}}^{1-\theta}&\le (1-\theta)\left\|-Av+f(v,\psi) \right\|_{V'}+\theta \left\|v_{t} \right\|_{L^{2}}^{\frac{1-\theta}{\theta}}\notag\\
			&\le \left\|-Av+f(v,\psi) \right\|_{V'}+\left\|v_{t} \right\|_{L^{2}}^{\frac{1-\theta}{\theta}}.\label{eq4.28}
		\end{align}
		By\eqref{eq4.27} and \eqref{eq4.28},
		\begin{equation*}
			\left(G(t)\right)^{1-\theta}\le C_{11}\left\{\left\|v_{t} \right\|_{L^{2}}^{2(1-\theta)}+|\mathcal{E}(v)|^{1-\theta}+\left\|-Av+f(v,\psi) \right\|_{V'}+\left\|v_{t} \right\|_{L^{2}}^{\frac{1-\theta}{\theta}}\right\}.
		\end{equation*}
	Since $0<\theta<\frac{1}{2}$, $2(1-\theta)>1$ and $\frac{1-\theta}{\theta}$. This means when $t$ is large enough,
		\begin{equation*}
			\left\|v_{t} \right\|_{L^{2}}^{2(1-\theta)}\le \left\|v_{t} \right\|_{L^{2}},\quad \left\|v_{t} \right\|_{L^{2}}^{\frac{1-\theta}{\theta}}\le \left\|v_{t} \right\|_{L^{2}}.
		\end{equation*}
		Recall that $\lim_{t\to +\infty}\left\|v_{t} \right\|_{L^{2}}=0$. Thus, when $t\ge \widetilde{T}_{0}$,
		\begin{equation}\label{eq4.29}
			\left(G(t)\right)^{1-\theta}\le 2C_{11}\left\{\left\|v_{t} \right\|_{L^{2}}+|\mathcal{E}(v)|^{1-\theta}+\left\|-Av+f(v,\psi) \right\|_{V'}\right\}.
		\end{equation}
		Since $\lim_{t\to +\infty}G(t)= 0$, for any $0<\eta<\sigma$ (Here $\sigma>0$ is as in Lemma~\ref{newnewlemma4.3}), there is $N\in \mathbb{N}$ such that when $n\ge N$,
		\begin{equation}\label{eq4.30}
			\left\|v(t_{n},\cdot) \right\|_{V}<\frac{\eta}{2},\quad\left\|v(t_{n},\cdot) \right\|_{L^{2}}<\frac{\eta}{2},\quad \frac{8C_{11}}{\theta C_{9}}\left(G(t_{n})\right)^{\theta}<\frac{\eta}{2}.
		\end{equation}
		We can choose $N$ large enough such that $t_{N}\ge \widetilde{T}_{0}$ and $\left\|v(t_{N},\cdot) \right\|_{V}<\sigma$. Define
		\begin{equation*}
			\bar{t}_{N}=\text{sup}\left\{t\ge t_{N}:\left\|v(s,\cdot) \right\|_{V}<\sigma,\ \forall s\in[t_{N},t]\right\}.
		\end{equation*}
		We claim that $\bar{t}_{N}=\infty$. If $\bar{t}_{N}<\infty$, when $t\in[t_{N},\bar{t}_{N}]$, it follows from \eqref{newlsinequality} and \eqref{eq4.29} that
		\begin{equation}\label{eq4.31}
			\left(G(t)\right)^{1-\theta}\le 4C_{11}\left\{\left\|v_{t} \right\|_{L^{2}}+\left\|-Av+f(v,\psi) \right\|_{V'}\right\}.
		\end{equation}
		Then we can deduce from \eqref{eq4.24}, \eqref{eq4.26} and \eqref{eq4.31} that
		\begin{equation}\label{eq4.32}
			-\frac{d}{dt}\left(G(t)\right)^{\theta}\ge \frac{\theta C_{9}}{8C_{11}}\left\{\left\|v_{t} \right\|_{L^{2}}+\left\|-Av+f(v,\psi) \right\|_{V'}\right\}\ge \frac{\theta C_{9}}{8C_{11}}\left\|v_{t} \right\|_{L^{2}}.
		\end{equation}
		Since $G(t)\ge 0$. Integrating \eqref{eq4.32} with respect to $t$ yields
		\begin{equation}\label{eq4.33}
			\int_{t_{N}}^{\bar{t}_{N}}\left\|v_{t} \right\|_{L^{2}}d\tau\le \frac{8C_{11}}{\theta C_{9}}\left(G(t_{N})\right)^{\theta}-\frac{8C_{11}}{\theta C_{9}}\left(G(\bar{t}_{N})\right)^{\theta}\le \frac{8C_{11}}{\theta C_{9}}\left(G(t_{N})\right)^{\theta}.
		\end{equation}
		On the other hands, using \eqref{eq4.30} and \eqref{eq4.33}, we get
		\begin{align*}
			\left\|v({\bar{t}_{N}}) \right\|_{L^{2}}&\le\int_{t_{N}}^{\bar{t}_{N}}\left\|v_{t} \right\|_{H}d\tau+\left\|v({{t}_{N}}) \right\|_{L^{2}}\\
			&\le \frac{8C_{11}}{\theta C_{9}}\left(G(t_{N})\right)^{\theta}+\left\|v({{t}_{N}}) \right\|_{L^{2}}\\
			&<\eta.
		\end{align*}
		This indicates
		\begin{equation*}
			\lim_{n\to \infty}\left\|v(\bar{t}_{N}) \right\|_{L^{2}}=0.
		\end{equation*}
		By the relatively compactness of the orbit $$\bigcup_{t\ge t_0}\left\{u(t,\cdot),u_{t}(t,\cdot)\right\},$$, there is a subsequence of $u(\bar{t}_{N})$, still denote itself, such that
		\begin{equation*}
			\lim_{n\to \infty}\left\|v(\bar{t}_{N}) \right\|_{V}=0.
		\end{equation*}
		Thus, there is $N'\ge N$ such that when $n\ge N'$,
		\begin{equation*}
			\left\|u(\bar{t}_{N})-\psi \right\|_{V}<\frac{\eta}{2}<\frac{\sigma}{2},
		\end{equation*}
		which contradicts the definition of $\bar{t}_{N}$. Thus, $\bar{t}_{N}=\infty$. Since $\lim_{t\to+\infty}G(t)\to 0$, by \eqref{eq4.33}, we obtain
		\begin{equation}\label{eq4.34}
			\int_{t_{N}}^{\infty}\left\|v_{t} \right\|_{L^{2}}d\tau<\infty.
		\end{equation}
		Note that
		\begin{equation}\label{eq4.35}
			\left\|v(t) \right\|_{L^{2}}\le\int_{t}^{\infty}\left\|v_{t} \right\|_{L^{2}}d\tau.
		\end{equation}
		Thus, we can deduce the convergence of $v$ in $L^{2}(\Omega)$ from \eqref{eq4.34} and \eqref{eq4.35}. Similar to the discussion in \eqref{htdxt}, by the relatively compactness of the orbit, we obtain the convergence of $v$ in $V$. Therefore, Theorem~\ref{Theorem 4.1} is proved.
	\end{proof}
	
	Based on Theorem~\ref{Theorem 4.1}, we will establish the corresponding convergence rate.
	
	Recall that $G(t)$ is monontone non-increasing on $[0,+\infty)$ and $G(t)\ge 0$. If $G(0)=0$, then the situation is trivial. Thus we assume that $G(0)>0$.
	By \eqref{eq4.24} and \eqref{eq4.31}, we get
	\begin{equation}\label{neweq15.1}
		G'(t)+c_{1}G(t)^{2(1-\theta)}\le 0.
	\end{equation}
	Similar to the proof of Lemma~\ref{L2shoulian}, we obtain that when $t$ is large enough,
	\begin{equation}\label{neweq14.1}
		G(t)\le (c_{2}+c_{3}t)^{-\gamma}.
	\end{equation}
	where $c_{2}=G(0)^{2\theta-1}$, $c_{3}=c_{1}(1-2\theta)$, $\gamma=\frac{1}{1-2\theta}$. Motivated by \cite{ref31}, we will prove the following theorem, which establish the convergence rate.
	\begin{theorem}\label{thm4.54.5}
		Assume that $\theta\in(0,\frac{1}{2})$ is as in Lemma~\ref{newnewlemma4.3}, then there are constants $C>0$, $\gamma=\frac{1}{1-2\theta}>0$ and $T_{0}>0$ such that when $t\ge T_{0}$,
		\begin{equation}\label{neweq17.1}
			\left\|u-\psi \right\|_{L^{2}}\le C(1+t)^{-\gamma}.
		\end{equation}
	\end{theorem}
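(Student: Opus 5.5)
The plan is to prove \eqref{neweq17.1} from the decay \eqref{neweq14.1} of the Lyapunov-type functional $G(t)$, via the tail integral of $\|v_t\|_{L^2}$, where $v=u-\psi$. Everything is carried out for $t\ge t_N$, the time fixed in the proof of Theorem~\ref{Theorem 4.1}. From that proof we know $\bar t_N=\infty$, so the Lojasiewicz--Simon inequality \eqref{newlsinequality} and the differential inequality \eqref{eq4.32} hold on all of $[t_N,\infty)$.

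\emph{Step 1 (tail bound).} Integrating \eqref{eq4.32} from $t$ to $\infty$ and using $G(t)\to0$ gives
\begin{equation*}
\|v(t)\|_{L^2}\le \int_t^\infty\|v_t\|_{L^2}\,d\tau\le \frac{8C_{11}}{\theta C_9}\,G(t)^{\theta}.
\end{equation*}
Combined with \eqref{neweq14.1}, this already yields polynomial decay, but only with exponent $\theta\gamma=\theta/(1-2\theta)$.

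\emph{Step 2 (differential inequality for the tail).} To reach the exponent $\gamma$ itself, I would introduce
\begin{equation*}
\Phi(t)=\int_t^\infty\big(\|v_t\|_{L^2}+\|-Av+f(v,\psi)\|_{V'}\big)\,d\tau,
\end{equation*}
which is finite by \eqref{eq4.32}. I would then try to close a differential inequality directly for $\Phi$.

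\begin{itemize}
\item[(a)] From \eqref{eq4.32}, $\Phi(t)\le C\,G(t)^{\theta}$.
\item[(b)] From \eqref{eq4.31}, $G(t)^{1-\theta}\le -C\,\Phi'(t)$.
\item[(c)] Eliminating $G$ between (a) and (b) gives $\Phi^{(1-\theta)/\theta}\le -C\,\Phi'$. This ODE comparison yields $\Phi(t)\le C t^{-\theta/(1-2\theta)}$.
\end{itemize}

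\emph{Step 3 (upgrading the exponent).} The bound from Step 2 has the same exponent as Step 1. To gain the extra factor, I would use the Lojasiewicz--Simon inequality once more, now to control $\|v\|$ by the gradient term $\|-Av+f(v,\psi)\|_{V'}$ rather than only by the tail of $\|v_t\|$. For this I would use the local structure obtained in Lemma~\ref{LSbudengshipaowupaowu}: in the nondegenerate case, \eqref{eqnew39.5} gives $\|v\|\le C\|M(v)\|$. In the degenerate case I would use the Lyapunov--Schmidt reduction through $\Psi$ and $\Gamma$, which splits $v$ into its kernel component $\xi$ and a remainder bounded by $\|M(v)\|$ via \eqref{eqeqnewnew39.2}.

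I would then attempt to bound $\|M(v)\|$ by $-\Phi'$, and the kernel component $|\xi|$ by $G$ through the finite-dimensional Lojasiewicz inequality for $\Gamma$. After that, the integration argument that proves \eqref{neweq14.1} would be repeated with these sharper comparisons, aiming for the exponent $\gamma=1/(1-2\theta)$ for $\|u-\psi\|_{L^2}$ for $t\ge T_0$.

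\textbf{Main obstacle.} The genuine difficulty is Step 3. Every standard route, whether the tail integral, dyadic Cauchy--Schwarz over $[2^kt,2^{k+1}t]$, or the ODE for $\Phi$, produces only the exponent $\theta/(1-2\theta)$. Obtaining the stated exponent $\gamma=1/(1-2\theta)$ requires a genuinely stronger link between $\|v\|$ and $G$, for instance $\|v\|_{L^2}\lesssim G$ near $\psi$. This is false in general when $\mathrm{Ker}(L)\ne\{0\}$, so this is the step I expect to fail or to need an extra assumption. I would try first the nondegenerate case, where \eqref{eqnew39.5} helps, and check carefully whether the kernel directions can be controlled.
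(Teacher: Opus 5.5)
Your Steps 1 and 2 are correct. Integrating \eqref{eq4.32} over $[t,\infty)$ and inserting \eqref{neweq14.1} gives $\|u-\psi\|_{L^2}\le C(1+t)^{-\theta/(1-2\theta)}$, and the ODE for $\Phi$ yields the same exponent.

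The gap is Step 3. It is only a plan, and it cannot work as outlined. The finite-dimensional Lojasiewicz inequality for $\Gamma$ bounds $|\nabla\Gamma(\xi)|$ from below by $|\Gamma(\xi)-\Gamma(0)|^{1-\theta}$. It gives no bound of the kernel component $|\xi|$ by $\Gamma(\xi)-\Gamma(0)$: for $\Gamma(\xi)=\xi^4$ one has $\theta=1/4$ and $|\xi|=\Gamma(\xi)^{1/4}$.

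In fact the exponent $\gamma$ is unattainable at a degenerate equilibrium. Consider the model $\ddot x+\dot x+4x^3=0$, with potential $x^4$ and Lojasiewicz exponent $\theta=1/4$. Its generic solutions decay exactly like $|x(t)|\sim(8t)^{-1/2}=t^{-\theta/(1-2\theta)}$. By contrast, $\gamma=1/(1-2\theta)>1$ for every admissible $\theta$.

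At a nondegenerate equilibrium the Lojasiewicz--Simon inequality holds with $\theta=1/2$, so you get exponential decay and the bound is trivially true. Even then you would need a $V\to V'$ analogue of \eqref{eqnew39.5}. Lemma~\ref{LSbudengshipaowupaowu} is set in the parabolic framework $H_D^4\to L^2$, not in the framework of Lemma~\ref{newnewlemma4.3}. Moreover, nothing in the paper rules out $\mathrm{Ker}(L)\neq\{0\}$. So the stated exponent cannot be reached without an extra nondegeneracy assumption.

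The paper's own argument does not reach $\gamma$ either. It integrates \eqref{eq4.58} over the dyadic intervals $[2^kt,2^{k+1}t]$, uses $G(2^kt)\lesssim(2^kt)^{-\gamma}$ with Cauchy--Schwarz, and sums a geometric series. This gives $\int_t^\infty\|v_t\|_{L^2}\,d\tau\le C(1+t)^{-(\gamma-1)/2}$. Since $(\gamma-1)/2=\theta/(1-2\theta)$, its final display is $\|v\|_{L^2}\le c_5(1+t)^{-\theta/(1-2\theta)}$, which is exactly your Step 1 bound.

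So what is actually established, by the paper and by you, is the rate with exponent $\theta/(1-2\theta)$. The exponent $\gamma$ in the statement is an overstatement, and so is its use in Theorem~\ref{date4.11thm}. For the correct exponent your Step 1 is the shorter route: \eqref{eq4.32} already controls the tail $\int_t^\infty\|v_t\|_{L^2}$ by $G(t)^{\theta}$, so the dyadic decomposition is unnecessary.
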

	\begin{proof}
		Recall that $u=v+\psi$, then \eqref{neweq17.1} is equivalent to
		\begin{equation}\label{neweq17.3}
			\left\|v \right\|_{L^{2}}\le C(1+t)^{-\gamma}.
		\end{equation}
		By \eqref{eq4.24}, we get
		\begin{equation}\label{eq4.58}
			\left\|v_{t} \right\|_{L^{2}}^{2}\le -\frac{1}{C_{9}}G'(t).
		\end{equation}
		When $t$ is large enough, for $k\in \mathbb{N}$, integrating \eqref{eq4.58} with respect to $t$ yields
		\begin{equation*}
			\int_{2^{k}t}^{2^{k+1}t}\left\|v_{t}(\tau) \right\|_{L^{2}}^{2}d\tau \le \frac{1}{C_{9}}(G(2^{k}t)-G(2^{k+1}t))\le \frac{1}{C_{9}}G(2^{k}t)\le \frac{c_{3}^{-\gamma}}{C_{9}}(2^{k}t)^{-\gamma}.
		\end{equation*}
		By Hölder inequality,
		\begin{equation*}
			\int_{2^{k}t}^{2^{k+1}t}\left\|v_{t}(\tau) \right\|_{L^{2}}d\tau\le \sqrt{2^{k}t}\left(\int_{2^{k}t}^{2^{k+1}t}\left\|v_{t}(\tau) \right\|_{L^{2}}^{2}d\tau\right)^{\frac{1}{2}}\le c_{4}(2^{k}t)^{\frac{1-\gamma}{2}}.
		\end{equation*}
		Then when $t$ is large enough,
		\begin{equation}\label{eq4.59}
			\int_{t}^{\infty}\left\|v_{t}(\tau) \right\|_{L^{2}}d\tau=\sum_{k=0}^{+\infty}\int_{2^{k}t}^{2^{k+1}t}\left\|v_{t}(\tau) \right\|_{L^{2}}d\tau \le c_{4} \frac{1}{1-2^{-\frac{\theta}{1-2\theta}}}(1+t)^{-\frac{\gamma-1}{2}}.
		\end{equation}
		In fact, we have
		\begin{gather*}
			\lim_{t\to+\infty}\frac{(c_{2}+c_{3}2^{k}t)^{-\gamma}}{c_{3}^{-\gamma}(2^{k}t)^{-\gamma}}=1,\\
			\lim_{t\to +\infty}\frac{t^{-\frac{\gamma-1}{2}}}{(1+t)^{-\frac{\gamma-1}{2}}}=1.
		\end{gather*}
		It follows from \eqref{eq4.59} that
		\begin{equation*}
			\left\|v \right\|_{L^{2}}\le 	\int_{t}^{+\infty}\left\|u_{t} \right\|_{L^{2}}d\tau\le c_{5}(1+t)^{-\frac{\theta}{1-2\theta}}.
		\end{equation*}
		Theorem~\ref{thm4.54.5} is proved.
	\end{proof}
	
	Based on Theorem~\ref{thm4.54.5}, we can further obtain the convergence rate of the polynomial $\left\| (v,v_{t})\right\|_{\mathbb{H}}$, as shown in the following theorem.
	\begin{theorem}\label{date4.11thm}
		Assume that $\theta\in(0,\frac{1}{2})$ is as in Lemma~\ref{newnewlemma4.3}, then there are constants $\widetilde{C}>0$, $\gamma=\frac{1}{1-2\theta}>0$ and $\widetilde{T}_{0}>0$ such that when $t\ge \widetilde{T}_{0}$,
		\begin{equation}\label{date4.11}
			\left\|u_{t} \right\|_{L^{2}}+\left\|u-\psi \right\|_{H_{D}^{2}}\le \widetilde{C}(1+t)^{-\gamma}.
		\end{equation}
	\end{theorem}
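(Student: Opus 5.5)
The plan is to follow the scheme used in the proof of Theorem~\ref{shoulianH4} for the parabolic problem. We take the $L^{2}$-rate of Theorem~\ref{thm4.54.5} as input, namely $\|v(t)\|_{L^{2}}\le C(1+t)^{-\gamma}$ with $\gamma=\frac{1}{1-2\theta}$ and $v=u-\psi$. We then lift it to the energy norm $\|v_t\|_{L^2}+\|v\|_{H_D^2}$ by a damped-wave energy estimate in which $\|v\|_{L^{2}}^{2}$ is the only source term. The exponent $\gamma$ in \eqref{date4.11} will be inherited directly from the one in Theorem~\ref{thm4.54.5}.

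First, we write the equation for $v$ in the linearized-difference form, as in \eqref{eqnewnew4.2}:
\[
v_{tt}+v_t+B\Delta^2v-T\Delta v=Gv,\qquad G=\frac{\lambda(2+u+\psi)}{(1+u)^2(1+\psi)^2}.
\]
Since $\|u\|_{L^\infty}\le 1-\kappa$ and $\|\psi\|_{L^\infty}\le 1-\frac{\kappa}{2}$ (Subsection~\ref{htdxt}), $G$ is bounded in $L^\infty$ by a constant $c_1=c_1(\lambda,\kappa)$. We work first with data in $D(\mathbb{A})\cap Z(\kappa)$, so that the multiplications below are legitimate, and recover the general case by the density argument of Lemma~\ref{newnewlemma 11111}. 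There the estimate passes to the limit because the constants depend only on $\lambda,\kappa$ and the solution depends continuously on the data in $\mathbb{H}$ (cf.\ \eqref{neweq10.11}).

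Next, we form the perturbed energy
\[
p(t)=\tfrac12\|v_t\|_{L^2}^2+\tfrac12\|v\|_{H_D^2}^2+\delta\,(v,v_t)_{L^2},
\]
with $\delta>0$ small. Multiplying the equation by $v_t$ and by $\delta v$ and adding gives
\[
p'(t)+(1-\delta)\|v_t\|_{L^2}^2+\delta\|v\|_{H_D^2}^2=\int_\Omega Gv\,(v_t+\delta v)\,dx-\delta(v,v_t)_{L^2}.
\]
We estimate the right-hand side by Young's inequality. The term $Gv\,v_t$ is bounded by $\frac14\|v_t\|_{L^2}^2+c_1^2\|v\|_{L^2}^2$. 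The remaining terms are absorbed into $\frac14\|v_t\|_{L^2}^2$ plus a constant multiple of $\|v\|_{L^2}^2$. The term we must not absorb into the left is $\delta\int G v^2\,dx$; we simply keep it as $c\|v\|_{L^2}^2$ on the right.

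For $\delta$ small, $p$ is equivalent to $\|v_t\|_{L^2}^2+\|v\|_{H_D^2}^2$, using $\|v\|_{L^2}\le\alpha_2\|v\|_{V}$. This yields
\[
p'(t)+\alpha\,p(t)\le c\,\|v(t)\|_{L^2}^2\le c\,C^2(1+t)^{-2\gamma},\qquad t\ge T_0,
\]
where the last step uses Theorem~\ref{thm4.54.5}. Integrating as in the proof of Theorem~\ref{shoulianH4}, by splitting $\int_{T_0}^t e^{\alpha\tau}(1+\tau)^{-2\gamma}\,d\tau$ at $t/2$, we get $p(t)\le c'(1+t)^{-2\gamma}$ for large $t$. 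The exponentially decaying terms are dominated by the polynomial one. Taking square roots gives \eqref{date4.11}.

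The main obstacle is the source term: the whole argument rests on $\|v\|_{L^2}^2$ decaying at the rate $(1+t)^{-2\gamma}$ supplied by Theorem~\ref{thm4.54.5}. The energy inequality itself is robust, because the uniform bound on $G$ and the damping term $v_t$ give a genuine exponential rate $\alpha$. It cannot improve the exponent, however, so \eqref{date4.11} holds with exactly the exponent of Theorem~\ref{thm4.54.5}. I would therefore check carefully that the $L^2$ input is available with the exponent $\gamma=\frac1{1-2\theta}$ as stated. If only a weaker exponent is at hand, the same argument transfers that weaker exponent verbatim to the energy norm.
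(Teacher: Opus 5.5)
Your proposal follows the paper's proof. The paper likewise combines the $v_t$- and $v$-multiplied identities into $\widetilde{E}=\|v_t\|_{L^2}^2+\|v\|_{H_D^2}^2+\tfrac14\|v\|_{L^2}^2+\tfrac12\int_\Omega vv_t\,dx$, keeps $\int_\Omega\tilde f v^2\,dx$ on the right as a source, obtains $\frac{d}{dt}\widetilde{E}+c_7\widetilde{E}\le C\|v\|_{L^2}^2$, and inserts the $L^2$ rate of Theorem~\ref{thm4.54.5} as in the proof of Theorem~\ref{shoulianH4}. Your closing caution is justified: the proof of Theorem~\ref{thm4.54.5} actually ends with $\|v\|_{L^2}\le c_5(1+t)^{-\frac{\theta}{1-2\theta}}$, i.e.\ exponent $\frac{\gamma-1}{2}$ rather than $\gamma$, so both your argument and the paper's really give \eqref{date4.11} only with the smaller exponent $\frac{\theta}{1-2\theta}$.
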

	\begin{proof}
		Recall that $u=v+\psi$, then \eqref{date4.11} is equivalent to
		\begin{equation}\label{eqdate4.40}
			\left\|v_{t} \right\|_{L^{2}}+\left\|v \right\|_{H_{D}^{2}}\le \widetilde{C}(1+t)^{-\gamma}.
		\end{equation}
		Recall that $v$ satisfies \eqref{eq4.14}, we note that
		$$f(v,\psi)=\frac{\lambda(2+u+\psi)}{(1+u)^{2}(1+\psi)^{2}}v.$$
		Let $$\tilde{f}=\frac{\lambda(2+u+\psi)}{(1+u)^{2}(1+\psi)^{2}}.$$
		Then $v$ satisfies
		\begin{equation}\label{date4.12}
			v_{tt}+v_{t}+B\Delta^{2}v-T\Delta v=\tilde{f}v.
		\end{equation}
		Recall that in subsection~\ref{nlhds}, we obtain $$\left\|\psi \right\|_{L^{\infty}}\le 1-\frac{\kappa}{2}.$$
		By $\left\|u \right\|_{L^{\infty}}\le 1-\kappa$,  we obtain that $\tilde{f}$ is bounded:
		\begin{equation}
			|\tilde{f}|\le \frac{\lambda(2+1-\kappa+1-\kappa/2)}{(\kappa)^{2}(\kappa/2)^{2}}:=c_{6}.
		\end{equation}
		It is seen from \eqref{eqnewnewnewnew4.1}, the following equality holds:
		\begin{equation}\label{eqdate4.14}
			\frac{d}{dt}\left(\frac{1}{2}\left\|v_{t} \right\|_{L^{2}}^{2}+\frac{1}{2}\left\|v \right\|_{H_{D}^{2}}^{2}\right)+\left\|v_{t} \right\|_{L^{2}}^{2}=\int_{\Omega}\tilde{f}vv_{t}dx.
		\end{equation}
		Since $\tilde{f}$ is bounded,
		\begin{align}
			\int_{\Omega}\tilde{f}vv_{t}dx&\le |\int_{\Omega}\tilde{f}vv_{t}dx|\notag\\
			&\le |\tilde{f}|\int_{\Omega}|vv_{t}|dx\notag\\
			&\le c_{6}\int_{\Omega}\left(\frac{c_{6}}{2}v^{2}+\frac{1}{2c_{6}}v_{t}^{2}\right)\notag\\
			&=\frac{c_{6}^{2}}{2}\left\|v \right\|_{L^{2}}^{2}+\frac{1}{2}\left\|v_{t} \right\|_{L^{2}}^{2}.\label{eqdate4.13}
		\end{align}
		Combining \eqref{eqdate4.14} and \eqref{eqdate4.13} yields
		\begin{equation}\label{eqdate4.15}
			\frac{d}{dt}\left(\left\|v_{t} \right\|_{L^{2}}^{2}+\left\|v \right\|_{H_{D}^{2}}^{2}\right)+\left\|v_{t} \right\|_{L^{2}}^{2}\le c_{6}^{2}\left\|v \right\|_{L^{2}}^{2}.
		\end{equation}
		Multiplying the equation in \eqref{date4.12} by $v$, then integrating over $\Omega$ yields
		\begin{align}
			\frac{d}{dt}\left(\frac{1}{2}\left\|v \right\|_{L^{2}}^{2}+\int_{\Omega}vv_{t}dx\right)+\left\|v \right\|_{H_{D}^{2}}^{2}&=\int_{\Omega}\tilde{f}v^{2}+\left\|v_{t} \right\|_{L^{2}}^{2}\notag\\
			&\le c_{6}\left\|v \right\|_{L^{2}}^{2}+\left\|v_{t} \right\|_{L^{2}}^{2}\label{eqdate4.17}.
		\end{align}
		 Multiplying \eqref{eqdate4.17} by $\frac{1}{2}$, then adding \eqref{eqdate4.15} to the resultant, we obtain
		\begin{equation}\label{eqdate4.20}
			\frac{d}{dt}\widetilde{E}+\frac{1}{2}\left\|v_{t} \right\|_{L^{2}}^{2}+\frac{1}{2}\left\|v \right\|_{H_{D}^{2}}^{2}\le (c_{6}^{2}+\frac{c_{6}}{2})\left\|v \right\|_{L^{2}}^{2},
		\end{equation}
		where
		\begin{equation*}
			\widetilde{E}=\left\|v_{t} \right\|_{L^{2}}^{2}+\left\|v \right\|_{H_{D}^{2}}^{2}+\frac{1}{4}\left\|v \right\|_{L^{2}}^{2}+\frac{1}{2}\int_{\Omega}vv_{t}dx.
		\end{equation*}
		Adding $\left\|v \right\|_{L^{2}}^{2}$ on both sides of \eqref{eqdate4.20} yields
		\begin{equation}\label{eqdate4.21}
			\frac{d}{dt}\widetilde{E}+\frac{1}{2}\left\|v_{t} \right\|_{L^{2}}^{2}+\frac{1}{2}\left\|v \right\|_{H_{D}^{2}}^{2}+\left\|v \right\|_{L^{2}}^{2}\le (c_{6}^{2}+\frac{c_{6}}{2}+1)\left\|v \right\|_{L^{2}}^{2}.
		\end{equation}
		By Young's inequality, we get
		\begin{equation}\label{eqdate4.30}
			\widetilde{E}\le \frac{5}{4} \left\|v_{t} \right\|_{L^{2}}^{2}+\left\|v \right\|_{H_{D}^{2}}^{2}+\frac{1}{2}\left\|v \right\|_{L^{2}}^{2}.
		\end{equation}
	Note that there is $c_{7}=\frac{2}{5}$ such that
		\begin{equation}\label{eqdate4.28}
			c_{7}\left(	\frac{5}{4} \left\|v_{t} \right\|_{L^{2}}^{2}+\left\|v \right\|_{H_{D}^{2}}^{2}+\frac{1}{2}\left\|v \right\|_{L^{2}}^{2}\right)\le \frac{1}{2}\left\|v_{t} \right\|_{L^{2}}^{2}+\frac{1}{2}\left\|v \right\|_{H_{D}^{2}}^{2}+\left\|v_{t} \right\|_{L^{2}}^{2}.
		\end{equation}
		By \eqref{eqdate4.21} and \eqref{eqdate4.28}, we obtain that
		\begin{equation*}
			\frac{d}{dt}\widetilde{E}+c_{7}\widetilde{E}\le (c_{6}^{2}+\frac{c_{6}}{2}+1)\left\|v \right\|_{L^{2}}^{2}.
		\end{equation*}
		Since $\left\|v \right\|_{L^{2}}$ exists polynomial estimation, then by the proof of Theorem~\ref{shoulianH4}, we can similarly deduce that, there is $c_{8}>0$, $\gamma=\frac{1}{1-2\theta}$ and $T_{1}>0$ such that
		\begin{equation}
			\widetilde{E}(t)\le c_{8}^{2}(1+t)^{-2\gamma}, \quad t\ge T_{1}.
		\end{equation}
		By the definition of $\widetilde{E}$, we get
		\begin{align*}
			\left\|v_{t} \right\|_{L^{2}}^{2}+\left\|v \right\|_{H_{D}^{2}}^{2}+\frac{1}{4}\left\|v \right\|_{L^{2}}^{2}&\le c_{8}^{2}(1+t)^{-2\gamma}- \frac{1}{2}\int_{\Omega}vv_{t}dx\\
			&\le c_{8}^{2}(1+t)^{-2\gamma}+ \frac{1}{4}\left\|v \right\|_{L^{2}}^{2}+\frac{1}{4}\left\|v_{t} \right\|_{L^{2}}^{2}.
		\end{align*}
		By the polynomial estimation of $\left\|v \right\|_{L^{2}}$, there is $c_{9}>0$ such that
		\begin{equation*}
			\left\|v_{t} \right\|_{L^{2}}^{2}+\left\|v \right\|_{H_{D}^{2}}^{2}\le c_{9}^{2}(1+t)^{-2\gamma}.
		\end{equation*}
		Thus
		\begin{gather*}
			\left\|v_{t} \right\|_{L^{2}}\le c_{9}(1+t)^{-\gamma}\\
			\left\|v \right\|_{H_{D}^{2}}\le c_{9}(1+t)^{-\gamma}.
		\end{gather*}
	Adding the two inequality yields \eqref{eqdate4.40}. Theorem~\ref{date4.11thm} is proved.
	\end{proof}
	
	Theorem~\ref{theorem1.3} is completly proved.
	
	\subsection{Comparison with parabolic MEMS equation}
 In this subsection, we summarize the two types of problems addressed in Section 3 and 4, highlighting their similarities and differences.
	
	\textbf{Similarities:}
	\begin{itemize}
		\item[1.] Both types of equations can be formulated as classical Cauchy problems:
		\begin{equation*}
			\left\{\begin{array}{ll}\dot{u}+Au=f(u),\\
				u(0)=u_{0}.\end{array}\right.
		\end{equation*}
		Furthermore, a solution-defined $C_{0}$-semigroup exists:
		$$u(t)=S(t)u_{0}.$$
		\item[2.] Both problems are approached using the same methodology:
		\begin{itemize}
			\item[] \textbf{Step 1.} Prove the problem defines a gradient system. Utilize properties of gradient systems to establish that the $\omega$-limit set is non-empty and consists of steady-state solutions.
			\item[] \textbf{Step 2.} Prove the Lojasiewicz-Simon inequality corresponding to the problem.
			\item[] \textbf{Step 3.} Construct an auxiliary function. Apply the Lojasiewicz-Simon inequality to prove convergence of the solution in $L^{2}$. Leverage the relative compactness of the trajectory to extend this convergence to spaces of higher regularity.
			\item[] \textbf{Step 4.} Utilize the differential inequality satisfied by the auxiliary function to derive the convergence rate in $L^{2}$.
			\item[] \textbf{Step 5.} Utilize the differential inequality satisfied by a new energy functional to extend the convergence rate from $L^{2}$ to spaces of higher regularity.
		\end{itemize}
	\end{itemize}
	
		\textbf{Differences:}
	\begin{itemize}
		\item[1.] When proving the relative compactness of the trajectory, the hyperbolic problem cannot boost the solution's regularity by leveraging \textit{a priori} estimates for elliptic problems in the same way the parabolic problem can. Consequently, we decompose the operator semigroup and employ Webb's Theorem \cite{ref9} to establish relative compactness of the trajectory.
		\item[2.] For a fixed time $t$, the solution spaces differ: the parabolic problem considers $H_{D}^{4}(\Omega)$, while the hyperbolic problem considers $H_{D}^{2}(\Omega)\times L^{2}(\Omega)$. Additionally, the spaces in which the expression $-Au+f(u)$ resides within the respective Lojasiewicz-Simon inequalities differ: $L^{2}(\Omega)$ for the parabolic problem and $(H_{D}^{2}(\Omega))^{'}$ (the dual space of $H_{D}^{2}(\Omega)$) for the hyperbolic problem.
		\item[3.] Due to the favorable regularity of solutions to the parabolic problem, multiplying both sides of the equation by $u_{t}$ and integrating over the domain $\Omega$ directly yields the desired energy functional. However, for the hyperbolic problem, the insufficient solution regularity necessitates proving the energy dissipation identity using a density argument.
	\end{itemize}
	
\section{Numerical Simulations}\label{szmn666}
In this section, we employ numerical methods to visualize solutions of the parabolic problem \eqref{neweq1.1} and hyperbolic problem \eqref{neweq1.2} discussed in the previous two chapters under fixed parameters. We then analyze and summarize these visualizations.
\subsection{Numerical Solutions for the Parabolic MEMS Equation}\label{paowumems}
This section focuses on numerical solutions of problem \eqref{neweq1.1} with fixed parameters. Setting $(B,T)=(0.01,1)$, $\Omega=(-1,1)$, $u_{0}=0$, and taking a step size of 0.04539 for varying $\lambda$, we obtain Figure~\ref{tgudingde2d} and Figure~\ref{xgudingde2d}.

In Figure~\ref{tgudingde2d}, we observe that the solution is symmetric about the domain $\Omega$ and attains its minimum at $x=0$. For fixed $x$, the solution value decreases as $\lambda$ increases. In Figures~\ref{parabolic2D0}-\ref{parabolic2D1}, we note that when $\lambda\le 0.4538$, the solution stabilizes over an extended time interval. Furthermore, at any fixed time $t\in(0,70)$, the solution value decreases as $\lambda$ increases. When $\lambda$ increases from $0.4538$ to $0.4539$, the solution value rapidly decreases, as shown in Figures~\ref{parabolic2D2}-\ref{parabolic2D3}. In Figures~\ref{parabolic2D4}-\ref{parabolic2D5}, we observe that for $\lambda\ge 0.4539$, the solution exhibits a rapid decreasing trend, and at any fixed time $t\in(0,0.4)$, the solution value decreases as $\lambda$ increases.

Additionally, we plot three-dimensional visualizations for parameters $\lambda=0.4538$ and $\lambda=0.4539$ to enhance observation, shown in Figure~\ref{parabolic3D}. Based on the above analysis, we propose the following conjecture.

\begin{conjecture}
	Assume $(B,T)$, $u_{0}$, $d$, and $\Omega$ are given, and $u$ is the unique maximal solution to problem \eqref{neweq1.1}. We make the following conjectures: there exists a critical value $\lambda_{p}^{*}>0$ such that
	\begin{enumerate}
		\item[\textup{1.}] For $0<\lambda<\lambda_{p}^{*}$, $u(t,x)$ exists globally, and for any fixed $(t,x)\in \mathbb{R}^{+}\times \Omega$, $u(t,x)$ is monotonically decreasing in $\lambda$;
		\item[\textup{2.}] For $\lambda >\lambda_{p}^{*}$, $u$ reaches the value $-1$ in finite time, and this time is monotonically decreasing in $\lambda$.
	\end{enumerate}
\end{conjecture}

\begin{figure}[H]
	\centering
	\subfigure[$t=10000$]{
		\label{parabolic2DNEW0}
		\includegraphics[width=0.39\linewidth]{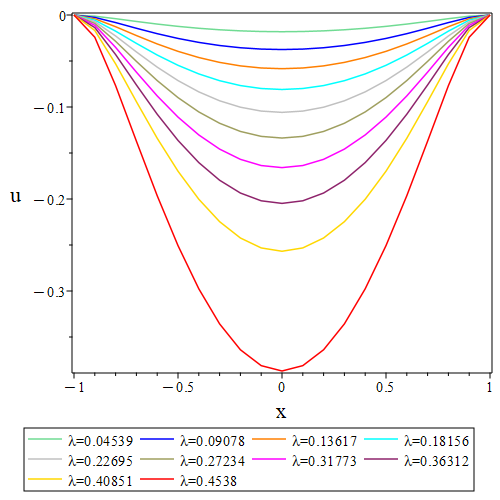}}
	\\
	\subfigure[$t=78.39$]{
		\label{parabolic2DNEW1}
		\includegraphics[width=0.39\linewidth]{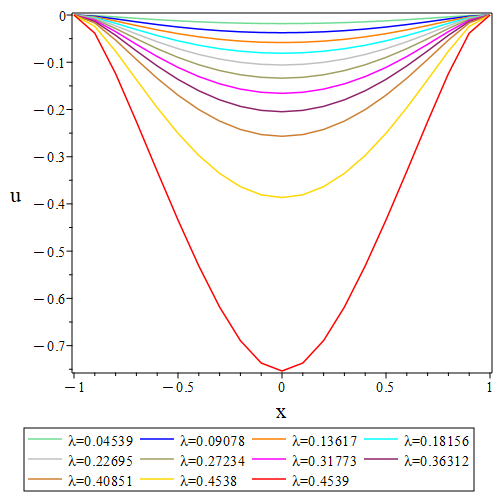}}
	\\
	\subfigure[$t=0.5$]{
		\label{parabolic2DNEW2}
		\includegraphics[width=0.39\linewidth]{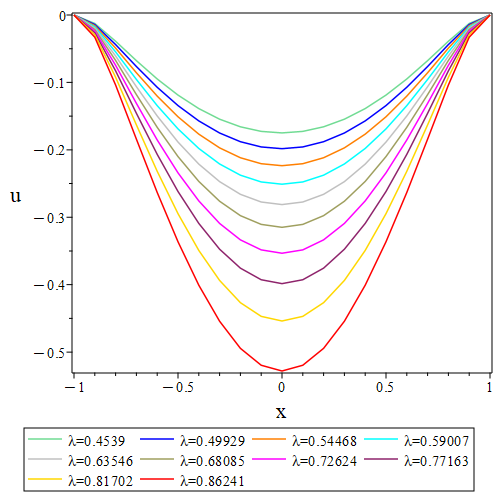}}
	\caption{Plot of the solution $u(t,x)$ for the parabolic MEMS equation \eqref{neweq1.1} at fixed $t$\label{tgudingde2d}}
\end{figure}

\begin{figure}[!htp]
	\begin{center}
		\subfigure[$x=0$]{
			\label{parabolic2D0}
			\includegraphics[width=0.4\linewidth]{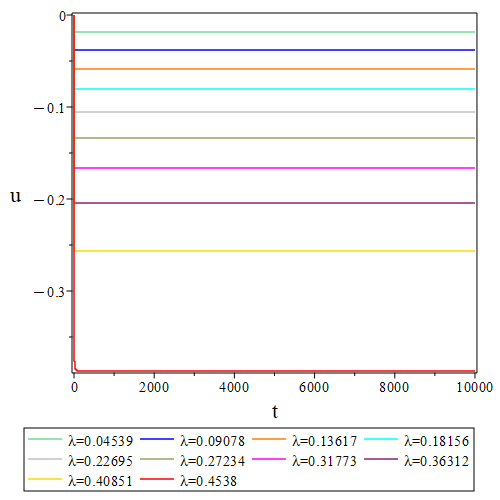}}
		\quad 
		\subfigure[$x=0$]{
			\label{parabolic2D1}
			\includegraphics[width=0.4\linewidth]{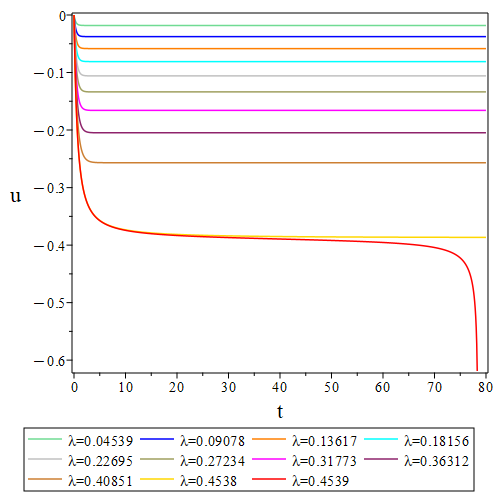}}\\
		
		\subfigure[$x=0$]{
			\label{parabolic2D2}
			\includegraphics[width=0.4\linewidth]{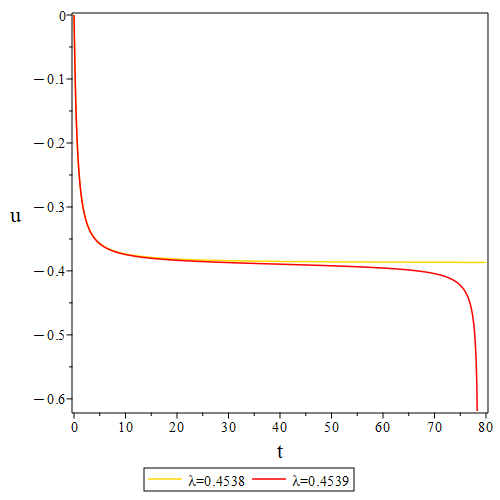}}
		\subfigure[$x=0$]{
			\label{parabolic2D3}
			\includegraphics[width=0.4\linewidth]{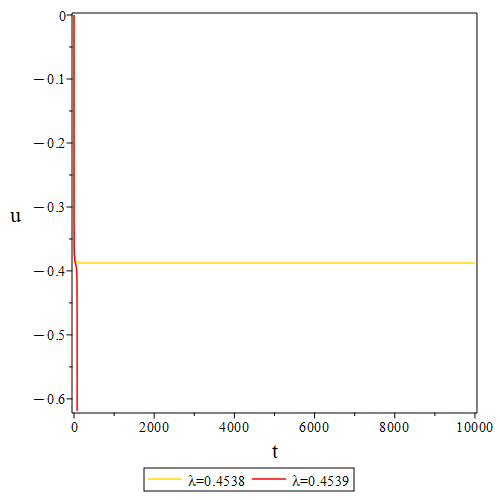}}\\
		
		\subfigure[$x=0$]{
			\label{parabolic2D4}
			\includegraphics[width=0.4\linewidth]{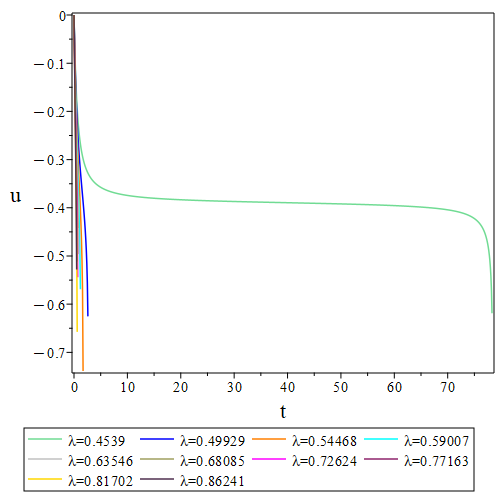}}
		\quad 
		\subfigure[$x=0$]{
			\label{parabolic2D5}
			\includegraphics[width=0.4\linewidth]{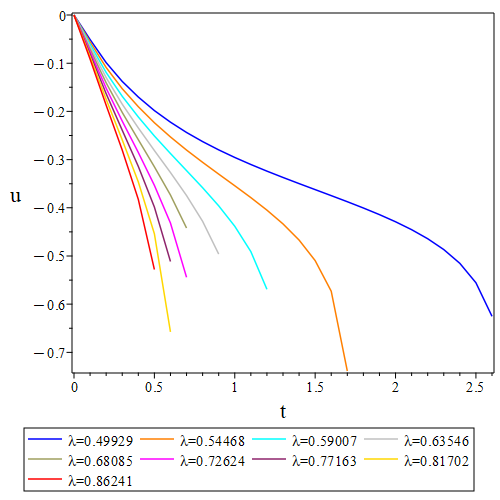}}
		\caption{Plot of the solution $u(t,x)$ for the parabolic MEMS equation \eqref{neweq1.1} at fixed $x=0$\label{xgudingde2d}}
	\end{center}
\end{figure}

\begin{figure}[!htp]
	\begin{center}
		\subfigure[$\lambda=0.4538$]{
			\label{parabolic3D1}
			\includegraphics[width=0.65\linewidth]{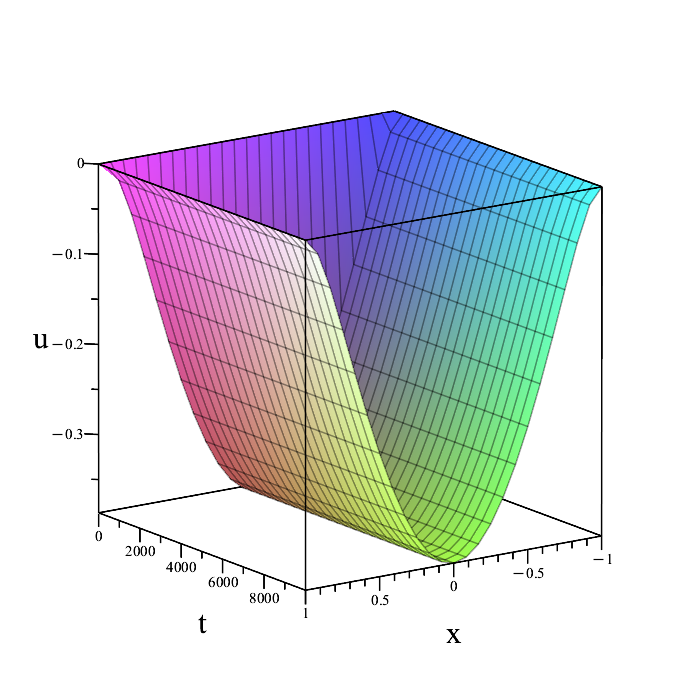}}\\ 
		\subfigure[$\lambda=0.4539$]{
			\label{parabolic3D2}
			\includegraphics[width=0.65\linewidth]{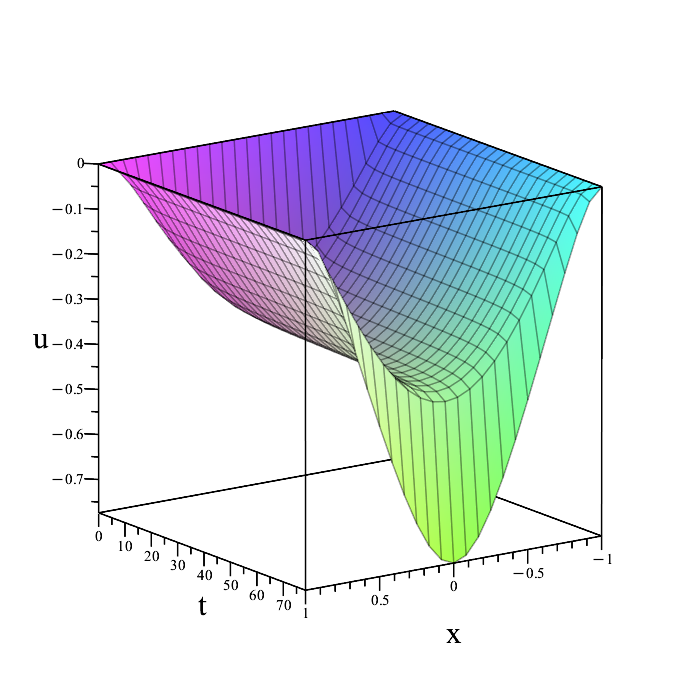}}
		\caption{Plot of the solution $u(t,x)$ for the parabolic MEMS equation \eqref{neweq1.1} at fixed $(t,x)$\label{parabolic3D}}
	\end{center}
\end{figure}
\FloatBarrier

\subsection{Numerical Solutions for the Hyperbolic MEMS Equation}\label{shuangqumemsshizhimoni111}
This section focuses on numerical solutions of problem \eqref{neweq1.2} with fixed parameters. Setting $(B,T)=(1,1)$, $\Omega=(-1,1)$, $(u_{0},u_{1})=(0,0)$, and taking a step size of 0.42864 for varying $\lambda$, we obtain Figure~\ref{tgudingde2dd} and Figure~\ref{xgudingde2dd}.

In Figure~\ref{tgudingde2dd}, we observe that the solution is symmetric about the domain $\Omega$ and attains its minimum at $x=0$. For fixed $x$, the solution value decreases as $\lambda$ increases. In Figures~\ref{hyperbolic2D0}-\ref{hyperbolic2D1}, we note that when $\lambda \le 4.2864$, the solution stabilizes over an extended time interval. Furthermore, at any fixed time $t\in (0,1)$, the solution value decreases as $\lambda$ increases. When $\lambda$ increases from 4.2863 to 4.2864, the solution value rapidly decreases, as shown in Figures~\ref{hyperbolic2D2}-\ref{hyperbolic2D3}. In Figures~\ref{hyperbolic2D4}-\ref{hyperbolic2D5}, we observe that for $\lambda\ge 4.2864$, the solution exhibits a rapid decreasing trend, and at any fixed time $t\in (0,0.3)$, the solution value decreases as $\lambda$ increases.

Additionally, we plot three-dimensional visualizations for parameters $\lambda=4.2863$ and $\lambda=4.2864$ to enhance observation, shown in Figure~\ref{hyperbolic3D}. Based on the above analysis, we propose the following conjecture.
\begin{conjecture}
	Assume $(B,T)$, $(u_{0},u_{1})$, $d$, and $\Omega$ are given, and $u$ is the unique maximal solution to problem \eqref{neweq1.2}. We make the following conjectures: there exist critical values $0<\lambda_{h,1}^{*}<\lambda_{h,2}^{*}$ such that
	\begin{enumerate}
		\item[\textup{1.}] For $0<\lambda<\lambda_{h,2}^{*}$, $u(t,x)$ exists globally;
		\item[\textup{2.}] For $0<\lambda<\lambda_{h,1}^{*}$, at any fixed $(t,x)\in \mathbb{R}^{+}\times \Omega$, $u(t,x)$ is monotonically decreasing in $\lambda$;
		\item[\textup{3.}] For $\lambda> \lambda_{h,2}^{*}$, $u$ reaches the value $-1$ in finite time, and this time is monotonically decreasing in $\lambda$.
	\end{enumerate}
\end{conjecture}

\begin{figure}[H]
	\centering
	\subfigure[$t=10000$]{
		\label{hyperbolic2DNEW0}
		\includegraphics[width=0.32\linewidth]{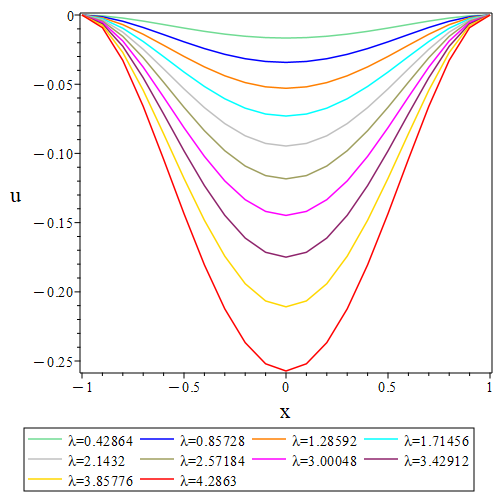}}
	\hfill
	\subfigure[$t=2.5$]{
		\label{hyperbolic2DNEW1}
		\includegraphics[width=0.32\linewidth]{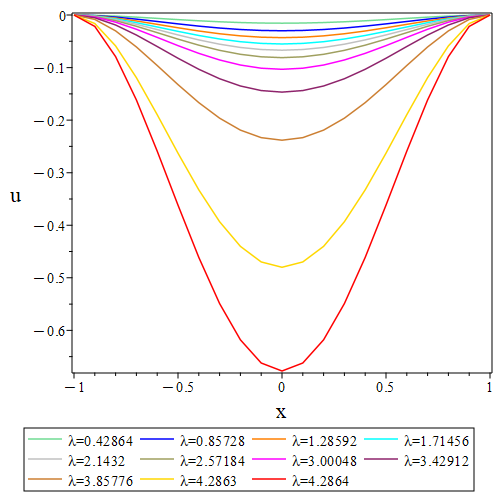}}
	\hfill
	\subfigure[$t=0.4$]{
		\label{hyperbolic2DNEW2}
		\includegraphics[width=0.32\linewidth]{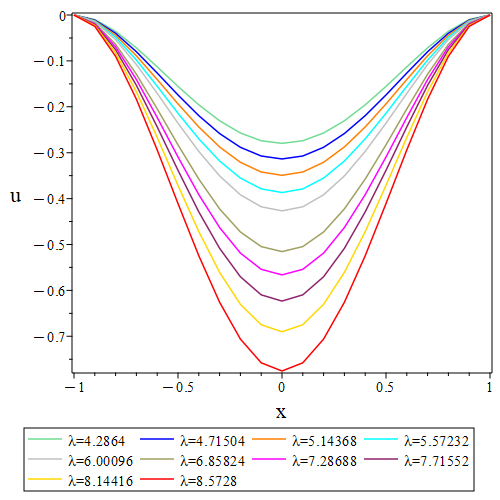}}
	
	\caption{Plot of the solution $u(t,x)$ for the hyperbolic MEMS equation \eqref{neweq1.2} at fixed $t$\label{tgudingde2dd}}
\end{figure}

\begin{figure}[!htp]
	\begin{center}
		\subfigure[$x=0$]{
			\label{hyperbolic2D0}
			\includegraphics[width=0.4\linewidth]{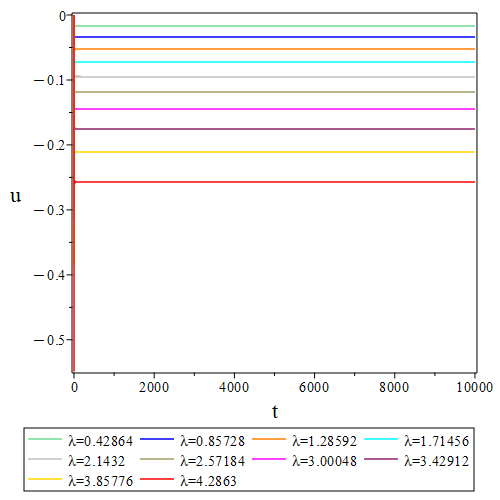}}
		\quad 
		\subfigure[$x=0$]{
			\label{hyperbolic2D1}
			\includegraphics[width=0.4\linewidth]{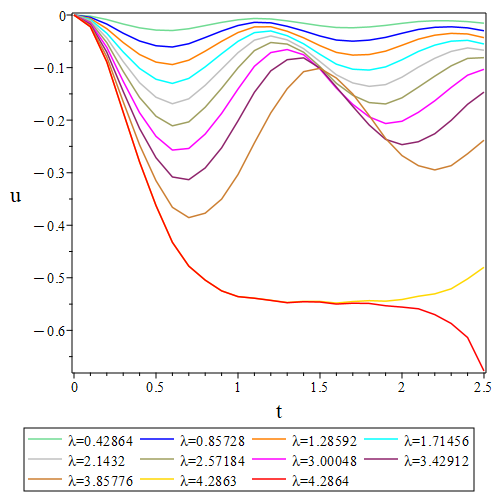}}\\
		
		\subfigure[$x=0$]{
			\label{hyperbolic2D2}
			\includegraphics[width=0.4\linewidth]{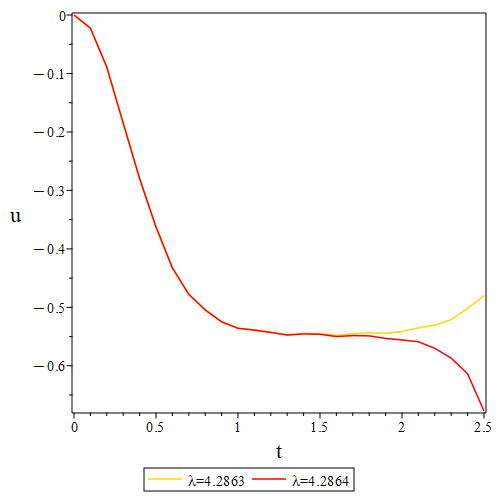}}
		\subfigure[$x=0$]{
			\label{hyperbolic2D3}
			\includegraphics[width=0.4\linewidth]{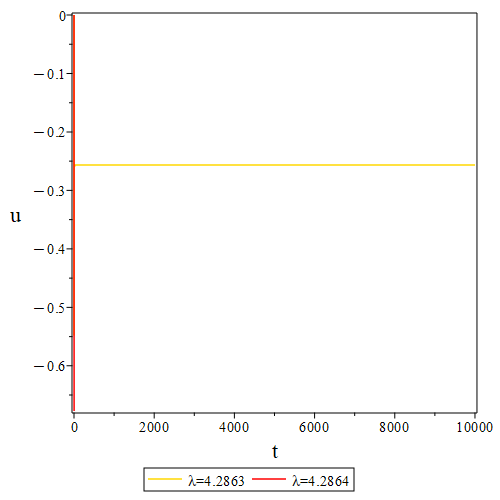}}\\
		
		\subfigure[$x=0$]{
			\label{hyperbolic2D4}
			\includegraphics[width=0.4\linewidth]{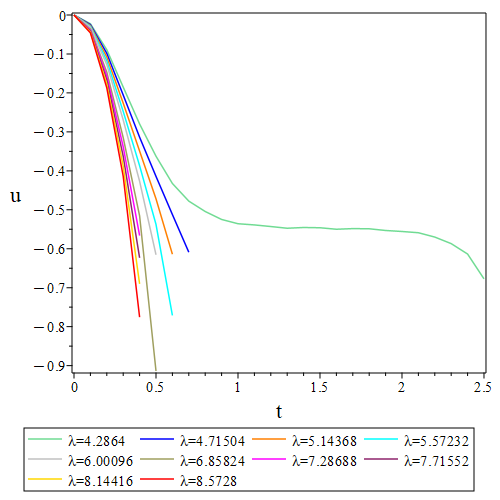}}
		\quad 
		\subfigure[$x=0$]{
			\label{hyperbolic2D5}
			\includegraphics[width=0.4\linewidth]{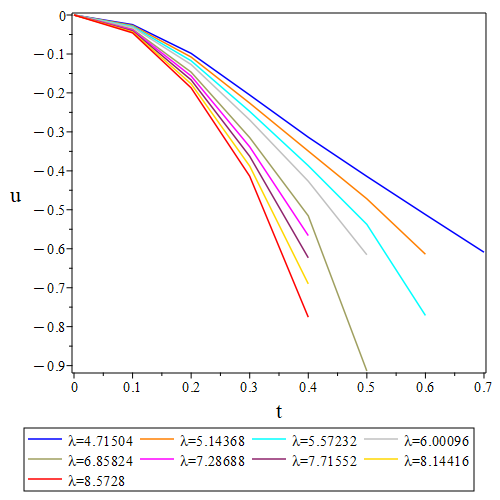}}
		\caption{Plot of the solution $u(t,x)$ for the hyperbolic MEMS equation \eqref{neweq1.2} at fixed $x=0$\label{xgudingde2dd}}
	\end{center}
\end{figure}

\begin{figure}[!htp]
	\begin{center}
		\subfigure[$\lambda=4.2863$]{
			\label{hyperbolic3D1}
			\includegraphics[width=0.65\linewidth]{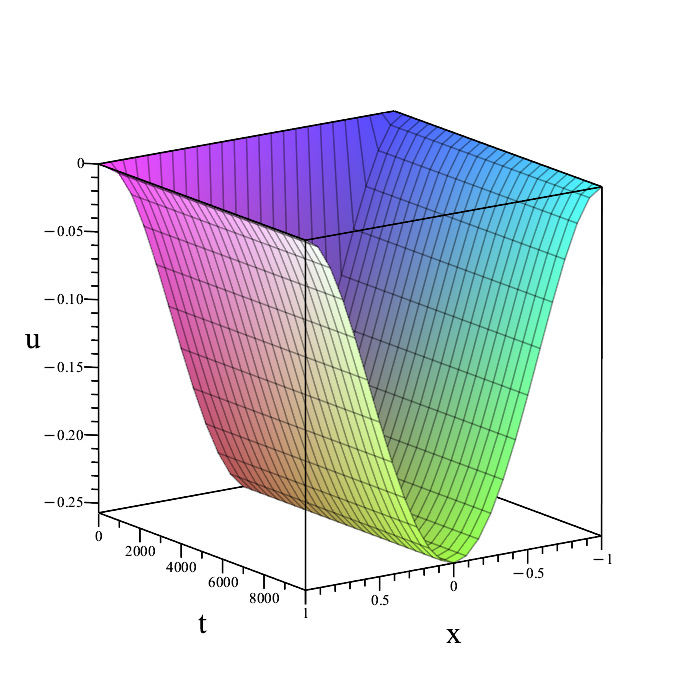}}\\ 
		\subfigure[$\lambda=4.2864$]{
			\label{hyperbolic3D2}
			\includegraphics[width=0.65\linewidth]{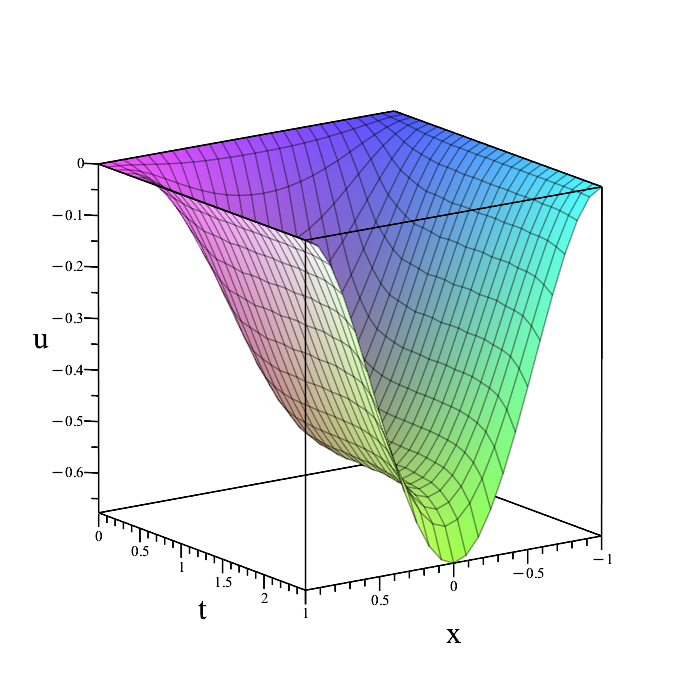}}
		\caption{Plot of the solution $u(t,x)$ for the hyperbolic MEMS equation \eqref{neweq1.2} at fixed $(t,x)$\label{hyperbolic3D}}
	\end{center}
\end{figure}
\FloatBarrier

\section{Summary}\label{zongjiezhanwang}
In this section, we summarize the content of the previous two sections and identify similarities and differences in the visualizations of solutions for both types of equations.

\textbf{Similarities:}
\begin{itemize}
	\item[1.] Both equations use zero initial conditions and share the same domain $\Omega = (-1,1)$.
	\item[2.] The solutions are symmetric about the domain $\Omega$ and attain their minimum at $x=0$.
	\item[3.] For fixed $(t,x)$, the solution value decreases as $\lambda$ increases.
	\item[4.] When $\lambda$ is less than or equal to some $\lambda_{*}$, the solution stabilizes over an extended time interval.
	\item[5.] When $\lambda$ is greater than or equal to some $\lambda^{*}$, the solution exhibits a rapid decreasing trend.
\end{itemize}

\textbf{Differences:}
Figure~\ref{parabolic2D1} and Figure~\ref{hyperbolic2D1} exhibit a notable difference:
\begin{itemize}
	\item[] For $\lambda \leq \lambda_{*}$ and within a certain time period starting from $t=0$, the solution in Figure~\ref{parabolic2D1} remains stable throughout;
	\item[] Whereas the solution in Figure~\ref{hyperbolic2D1} displays oscillatory behavior: first rising, then falling, and subsequently rising again.
\end{itemize}

\end{document}